\documentclass[10pt,oneside,a4paper,reqno]{amsart}

\usepackage[utf8]{inputenc}
\usepackage[hmargin=2cm,vmargin=2cm]{geometry}
\usepackage{parskip}
\usepackage[foot]{amsaddr}

\usepackage[english]{babel}
\usepackage{enumitem}
\usepackage{verbatim}
\usepackage{mathrsfs}

\usepackage{hyperref}
\hypersetup{colorlinks=true,allcolors=blue}

\usepackage{mathtools}
\mathtoolsset{showonlyrefs}
\usepackage{amssymb}
\usepackage{amsthm}
\usepackage{bbm}
\usepackage{aligned-overset}
\usepackage{xcolor}

\newtheorem{theorem}{Theorem}[section]
\newtheorem{lemma}[theorem]{Lemma}
\newtheorem{assumption}[theorem]{Assumption}
\newtheorem{proposition}[theorem]{Proposition}
\newtheorem{corollary}[theorem]{Corollary}
\newtheorem{problem}[theorem]{Problem}
\theoremstyle{definition}
\newtheorem{definition}[theorem]{Definition}
\theoremstyle{remark}
\newtheorem{remark}[theorem]{Remark}

\newcommand{\cB}{{\mathcal B}}

\newcommand{\cF}{{\mathcal F}}

\newcommand{\cS}{{\mathcal S}}

\newcommand{\bbC}{{\mathbb C}}
\newcommand{\bbN}{{\mathbb N}}
\newcommand{\bbP}{{\mathbb P}}
\newcommand{\bbR}{{\mathbb R}}

\renewcommand{\P}{{\mathbb P}}
\renewcommand{\d}{{\rm{d}}}
\newcommand{\dx}{\mathrm{d} x}
\newcommand{\dt}{\mathrm{d} t}

\newcommand{\R}{\mathbb{R}}
\newcommand{\K}{\mathbb{K}}
\newcommand{\C}{\mathbb{C}}
\newcommand{\N}{\mathbb{N}}

\DeclareMathOperator{\Imag}{\mathrm{Im}}

\DeclareMathOperator{\esssup}{ess\,sup}

\DeclarePairedDelimiter{\abs}{\lvert}{\rvert}
\DeclarePairedDelimiter{\norm}{\lVert}{\rVert}
\DeclarePairedDelimiter{\cur}{\{}{\}}
\DeclarePairedDelimiter{\bra}{(}{)}
\DeclarePairedDelimiter{\sqr}{[}{]}

\newcommand{\PP}[2][]{\mathbb{P}\, \sqr[#1]{#2}}
\newcommand{\EE}[2][\big]{\mathbb{E} \sqr[#1]{#2}}

\title{The Schr\"odinger equation with fluctuating nonlinearity in the energy space}
\author{Max Sauerbrey$^1$}
\address{$^1$Max Planck Institute for Mathematics in the Sciences\\
Inselstr. 22 \\ 04103 Leipzig \\ Germany.} \email{maxsauerbrey97@gmail.com}
\author{Joris van Winden$^{2,3}$}
\address{$^2$Mathematical Institute, Leiden University\\ 
Einsteinweg 55\\ 2333 CC Leiden \\ The Netherlands.}
\email{j.van.winden@math.leidenuniv.nl}
\address{$^3$Delft Institute of Applied Mathematics, Delft University of Technology, Mekelweg 4, 2628 CD Delft, The Netherlands}
\date{September 9, 2026}
\thanks{JvW acknowledges support from a DIAM fast-track scholarship and from the NWO grant VI.C.242.087, `Patterns in Random Media'.}

\begin{document}

    \begin{abstract}We study nonlinear Schr\"odinger equations with nonlinear Stratonovich noise
    \begin{equation*}
    \d u\,=\, i\bigl[ \Delta u \,+\, \lambda|u|^{p-1}u\bigr] \,  \d t \,+\,i|u|^{(q-1)/2}u\circ \d {W},
    \end{equation*} in their energy space $H^1(\R^d;\C)$. By combining the stochastic Strichartz estimates derived in [Potential Anal. 41 (2014), pp.\ 269--315] with the approach from [Ann.\ Inst.\ H.\ Poincar\'e Phys.\ Th\'eor.\ 46 (1987), pp.\ 113--129] we obtain local well-posedness for all energy-subcritical nonlinearities $p,q\in [1, 1+4/(d-2)_+)$ together with a corresponding blow-up alternative. For a linear multiplicative noise $q=1$, a real-valued noise   $W$
 and a defocusing  nonlinearity $\lambda\le 0$, we check this blow-up condition using a bound on the energy, resulting in the global well-posedness of the equation. If both nonlinearities are mass-subcritical, i.e., $p,q\in [1, 1+4/d)$, we provide an improved blow-up criterion involving the $L^2(\R^d;\C)$-norm. Using the conservation of mass for real-valued $W$, we obtain global well-posedness  also in this case.
    Compared to previous results on stochastic nonlinear Schr\"odinger equations, we thereby improve the range of exponents $p$ and $q$ and the spatial regularity assumption on the noise.
\end{abstract}

\maketitle

\section{Introduction}\label{Sec:Intro}

We consider the stochastic nonlinear Schr\"odinger equation with fluctuating nonlinearity interpreted in the Stratonovich sense
\begin{equation}\label{Eq100}\tag{SNLS}
 \d u \,=\, i\bigl[\Delta u +\lambda|u|^{p-1}u\bigr]\, \d t\,+\, i|u|^{(q-1)/2}u \circ \d W,\qquad u(0)\,=\, u_0,
\end{equation}
on the whole space $\R^d$.  Here, $\lambda\in \R$ and $p,q\in [1,\infty)$ are parameters and $\frac{\d W}{\d t}$ is a Gaussian process which is white in time and suitably colored in space. 
Although this equation has seen significant interest in recent decades, a complete picture of local well-posedness in $H^1_x$ has been absent, even in the case $q = 1$.

In the current work, we establish a stochastic version of Kato's approach to Schr\"odinger equations \cite{kato} and obtain local well-posedness in the full energy-subcritical range of deterministic as well as stochastic exponents, under optimal assumptions on the noise regularity.

 \subsection{State of the art}
The well-posedness of nonlinear Schr\"odinger equations is often determined by the relation of the nonlinearities  to the Laplacian when rescaled invariantly with respect to the $L_x^2$ and $H_x^1$-norm, respectively, appearing in the typically conserved quantities
\begin{equation}\label{eqn_conserved_qtts}\text{mass}  \qquad \int_{\R^d} |u|^2 \,\d x\qquad \text{and energy} \qquad  \int_{\R^d} \frac{|\nabla u|^2}{2}  - \frac{\lambda |u|^{p+1} }{p+1}\, \d x.
\end{equation}
Accordingly, the relations  $p < 1 + 4/d$, $p=1+4/d$ and $p>1+4/d$ refer to the deterministic nonlinearity in \eqref{Eq100}  being mass-subcritical, critical or supercritical respectively.
For energy-criticality the situation is similar, but with $1+4/d$ replaced by $1+4/(d-2)_+$ (with the convention that ${1}/{0}=\infty$).  Since only for non-positive $\lambda$ the energy has a sign, one distinguishes moreover between defocusing ($\lambda\le 0$) and focusing ($\lambda>0$) deterministic nonlinearities. 
Concerning nonlinear Schr\"odinger equations with additive noise, global well-posedness for energy-subcritical, defocusing nonlinearities was shown in \cite{DeBouard_Debussche_H1}, while it leads for focusing supercritical nonlinearities to blow-up of the solution \cite{BlowUp_2002}. Global well-posedness for defocusing, critical nonlinearities in $L_x^2$ and $H_x^1$, respectively, was recently shown in \cite{Oh_Okamoto}. 
Concerning nonlinear Schr\"odinger equations with linear  multiplicative noise, the global in time well-posedness in a restricted mass-subcritical setting   $ p< 1+\min\{4/d, 2/(d-1)\}$ was obtained in the pioneering work \cite{DeBouard_Debussche_1999}. A similarly restricted energy-subcritical range 
\begin{equation}\label{Eq35}
	p \,\in\, [1,1+2/(d-1)_+) \cup [2, 1+4/(d-2)_+)
\end{equation}
 for defocusing Schr\"odinger equations with linear multiplicative noise appears in the aforementioned \cite{DeBouard_Debussche_H1} as well. 
 Blow-up by linear multiplicative noise for focusing supercritical nonlinearities was on the contrary shown in \cite{Blow_up_2005}.
 A study of the blow-up behavior of solutions in the  focusing case  was performed in the recent works \cite{millet2025wellposednessfocusingstochasticnonlinear,millet2026energycriticalstochasticnonlinearschrodinger} for energy-subcritical and critical nonlinearities, respectively.
 Mass-critical defocusing nonlinearities were recently treated in \cite{FX_subcrit_approx,FX_wellposedness} for a noise $W$ which is sufficiently smooth and decaying as $|x|\to\infty$. Under the latter assumption a rescaling transformation of the equation is possible, leading to a random PDE for the variable $\exp(W)u$. This allowed the authors of \cite{BRZ_2014} and \cite{BRZ_2016} to treat mass-subcritical and energy-subcritical, defocusing nonlinearities, respectively, and critical, defocusing nonlinearities in  \cite{zhang_critical}. With applications to open quantum systems in mind, cf.\  Subsection \ref{SS:motivation} below, the noise in these works is taken in a way which turns the mass into a martingale in the time variable. For the case of a purely imaginary noise $W$, such a noise can be shown to lead to improved blow-up and scattering behaviour for focusing nonlinear Schr\"odinger equations \cite{BRZ_nonBlowUp, herr2019scattering,spitz2025regularizationnoiseenergymasscritical}. For Schr\"odinger equations with supercritical nonlinearity, regularization by a non-local, superlinear noise was recently proved in \cite{brzeźniak2024globalposednessergodicresults}.  
 Concerning \eqref{Eq100} with $q>1$, the only works we are aware of are \cite{hornung_thesis,hornung_SNLS}.
 In~\cite{hornung_SNLS}, local well-posedness in $L_x^2$ for mass-subcritical or mass-critical nonlinearities $p,q\in [1, 1+4/d]$ is shown as well as global well-posedness, 
 if additionally

 \begin{equation}\label{Eq36}
 p,q< 1+4/d\qquad \text{and}\qquad 
q\,\le \, 2\,\frac{p-1}{p+1}\frac{4+d(1-p)}{4p+d(1-p)} \,+\, 1.
 \end{equation}
 In~\cite{hornung_thesis}, local well-posedness in $H^1_x$ with nonlinear noise is obtained, for $p,q$ both in the restricted energy-subcritical range given by~\eqref{Eq35}.

 \subsection{Statement of the main result and discussion}
 The following local and global well-posedness of~\eqref{Eq100} in the energy space $H^1_x$ is a consequence of our main results
 Theorem \ref{Thm:WP_H1} and Proposition \ref{prop:global} in combination with  well-established estimates for \eqref{Eq100}, which we recall in Section \ref{Sec_quant}.

 \begin{theorem}\label{thm_intro} On a complete  probability space $(\Omega, \mathfrak{A},\P)$ with filtration $(\mathcal{F}_t)_{t \geq 0}$ satisfying the usual conditions, let 
 	\begin{equation*}
 		u_0 \in L^0_{\Omega}(H^1_x)   \text{ be $\mathcal{F}_0$-measurable}
 		\end{equation*}
 	and
 	\begin{equation}\label{noise_reg_intro}
 	W(t) \,=\,\sum_{k\in \N} \phi_k \beta_k(t),\qquad  (\phi_k)_{k\in \N}\in L^{\infty}(\R^d;\ell^2(\bbC)),
 	\end{equation}
such that
 \begin{equation}\label{noise_reg_intro_extra}(\nabla \phi_k)_{k\in \N}\in L^{\nu}(\R^d;\ell^2(\C^d)),\qquad  
 	\begin{cases}
 	\nu \ge \max\{2,d\}, & d\ne 2,\\
 	\nu>2  , & d=2,
 \end{cases}
 \end{equation}
 for a family $(\beta_k)_{k\in \N}$ of independent $\mathcal{F}$-Brownian motions.
 
  Then, for $p,q\in [1,1+4/(d-2)_+)$  (i.e., for energy-subcritical nonlinearities), there exists a maximal unique local solution to \eqref{Eq100} in $H^1_x$.
   For real-valued $(\phi_k)_{k\in \N}$, the latter exists globally in time if either
   \begin{enumerate}[label=(\alph*)]\item \label{item_A} the noise is linear multiplicative noise and the nonlinearity is defocusing:  $q=1$ and  $\lambda \le 0$, or
   	\item \label{item_B} both nonlinearities are mass-subcritical:
   	$p,q<1+4/d$.
   \end{enumerate} \end{theorem}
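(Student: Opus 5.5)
Theorem \ref{thm_intro} will follow by combining a local theory in Strichartz spaces with a priori bounds coming from the conserved or controlled quantities. The first step is to pass from Stratonovich to Itô form. Writing $\Phi(u)=|u|^{(q-1)/2}u$, the correction term is $-\tfrac12\sum_k |\phi_k|^2|u|^{q-1}u$ plus a term involving $\Imag\phi_k$ when the $\phi_k$ are complex. Since $(\phi_k)\in L^\infty_x(\ell^2)$, this correction is a power nonlinearity of order $q\le p_{\max}$ with bounded coefficients.

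For local well-posedness I will follow Kato's approach and run a fixed point in
\[
X_T=\{u\in L^2_\Omega(C_tH^1_x\cap L^{r}_tW^{1,s}_x)\}
\]
for an admissible pair $(r,s)$ adapted to $p,q$. The deterministic Strichartz estimates handle $e^{it\Delta}$ and the drift. For the stochastic convolution $\int_0^t e^{i(t-s)\Delta}\, i\Phi(u)\,\d W$ I will use the stochastic Strichartz estimates of Brze\'zniak--Millet. These need control of $\Phi(u)\phi_k$ in $L^2_\omega L^2_t\gamma(\ell^2,H^1_x)$-type norms.

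The gradient $\nabla(\Phi(u)\phi_k)$ splits into $\Phi'(u)\nabla u\,\phi_k$, handled by $L^\infty(\ell^2)$, and $\Phi(u)\nabla\phi_k$. For the latter I use Hölder with $\nabla\phi\in L^\nu(\ell^2)$ and Sobolev $H^1\subset L^{2\nu/(\nu-2)}$. This is exactly where $\nu\ge\max\{2,d\}$ enters, with $\nu>2$ when $d=2$, in order to avoid the failing endpoint embedding $H^1\not\subset L^\infty$.

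Because the nonlinearities are not globally Lipschitz, I truncate them with a cutoff $\theta_R(\|u\|_{X_t})$, solve the truncated problem globally by contraction on a short deterministic interval, and iterate. Letting $R\to\infty$ and using stopping times yields a maximal unique solution together with the blow-up alternative: $\|u\|_{L^r_tW^{1,s}_x}$ blows up at the maximal time. The main obstacle is that contraction in $H^1$ for non-smooth power nonlinearities ($p<2$) fails. Following Kato, I will therefore contract in the weaker metric $L^r_tL^s_x$ on an $H^1$-bounded ball, which is closed in that metric by weak lower semicontinuity. Differences only need Lipschitz bounds of $|u|^{p-1}u$ and $\Phi$ in $L^s$. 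This is what removes the restriction \eqref{Eq35}.

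For the global statements I use the quantities in \eqref{eqn_conserved_qtts}. For real $\phi_k$ the noise is $i\Phi(u)\circ\d W$ with real coefficient, so Itô's formula gives exact conservation of mass. In case \ref{item_B} the refined blow-up criterion, namely control by the $L^2$-norm when $p,q<1+4/d$, follows from the same fixed point performed in $L^2$-Strichartz spaces; mass conservation then gives globality.

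In case \ref{item_A} I apply Itô's formula to the energy. This is justified via regularization, for instance with Yosida approximations $(1-\epsilon\Delta)^{-1}$, followed by passage to the limit. The noise $iu\phi_k$ produces terms $\int|u|^2|\nabla\phi_k|^2$ and $\int \nabla u\cdot u\nabla\phi_k$, which are bounded by $C(\|u\|_{H^1}^2)$ using $\nabla\phi\in L^\nu(\ell^2)$ and the same Sobolev argument. The potential energy term is invariant under multiplication by $e^{i\phi\beta}$, so it produces no correction when $q=1$ and $\phi$ is real. Since $\lambda\le0$, the energy controls $\|\nabla u\|_2^2$. Gronwall together with BDG then gives $\mathbb E\sup_t\|u\|_{H^1}^2<\infty$ up to stopping times. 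Finally, the Strichartz norm is controlled by the $H^1$ bound on short intervals via the local theory, which verifies the blow-up alternative and gives global existence.
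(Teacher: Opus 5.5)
Your treatment of the It\^o correction, of the $\nabla\phi_k$-terms via the exponent $\nu$, and of the energy estimate in case (a) is in line with the paper. However, the local theory does not close as you set it up, and that is the heart of the statement.

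\textbf{The truncation in the local theory.} You truncate with $\theta_R(\|u\|_{X_t})$, where by your definition of $X_T$ this is the $C_tH^1_x\cap L^r_tW^{1,s}_x$-norm, and then contract in the weaker metric $L^r_tL^s_x$. The difference $\theta_R(\|u\|_{X_t})N(u)-\theta_R(\|v\|_{X_t})N(v)$ contains the term $\bigl(\theta_R(\|u\|_{X_t})-\theta_R(\|v\|_{X_t})\bigr)N(v)$. The cut-off factor is not even continuous with respect to $L^r_tL^s_x$: add to $u$ a highly oscillating perturbation that is small in $L^r_tL^s_x$ but of size $R$ in $H^1_x$. Your remark that differences only need Lipschitz bounds of the power nonlinearities in $L^s$ overlooks exactly this term. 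It is the obstruction described in Subsection~\ref{SS:strategy_LWP}.

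Truncating the weak norm $\|u\|_{L^r(0,t;L^{p+1}_x)}$ instead, as in the $L^2_x$-theory, is Lipschitz but gives too little control. Kato's difference bound reads
$\|F_2(u)-F_2(v)\|_{L^{r'}_tL^{1+1/p}_x}\lesssim T^{1-2/r}\bigl(\|u\|_{L^\infty_tL^{p+1}_x}^{p-1}+\|v\|_{L^\infty_tL^{p+1}_x}^{p-1}\bigr)\|u-v\|_{L^r_tL^{p+1}_x}$,
and replacing $L^\infty_t$ by $L^r_t$ there requires $r>p+1$, i.e.\ $p<1+4/d$.

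The missing idea has three parts:
\begin{enumerate}
\item Split $F^{(n)}=F^{(n)}_1+F^{(n)}_2$ and $G=G_1+G_2$, with $F_1^{(n)}$ and $G_1$ globally Lipschitz and left untruncated.
\item Truncate only $F_2^{(n)}$ and $G_2$, through the pointwise-in-time map $u(t)\mapsto\theta_R(\|u(t)\|_{L^{p+1}_x})\,u(t)$. This map is $2$-Lipschitz on $L^{p+1}_x$ and caps the $L^{p+1}_x$-norm at $R$, which is all the growth factors require. It also does not increase $\|u(t)\|_{W^{1,p+1}_x}$.
\item Run the contraction on a set defined by a second-moment bound on the strong norm, metrized by $L^2_{\mathcal P}(C_tL^2_x\cap L^r_tL^{p+1}_x)$.
\end{enumerate}
As a by-product, the blow-up alternative is in terms of $\sup_t\|u(t)\|_{L^{p+1}_x}$. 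In case (a) this is controlled directly by the energy bound via $H^1_x\hookrightarrow L^{p+1}_x$, so your extra step from $H^1_x$-bounds back to Strichartz norms becomes unnecessary.

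\textbf{Case (b).} This is not settled by ``the same fixed point in $L^2$-Strichartz spaces plus mass conservation''. In the stochastic setting, an almost sure bound $\|u(t)\|_{L^2_x}\le K$ does not give a deterministic existence time, because the stochastic convolution is not almost surely small on short intervals. This is why the earlier $L^2_x$-theory obtained globality only under the coupling condition \eqref{Eq36}.

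The paper needs a new estimate, Proposition~\ref{prop:reduce_to_L2}. Stopping times $\rho_k$ cut $\|u\|^r_{L^r(0,\sigma_K\wedge T^*;L^{p+1}_x)}$ into pieces of size $(3\gamma K)^r$. On each piece the Strichartz estimates force the stochastic term to satisfy $S_k\ge\gamma K$. Meanwhile $\sum_k\mathbb E[S_k^{\zeta}]$, with $\zeta=2/\theta$, is bounded through the telescoping sum $\sum_k\mathbb E[\rho_k-\rho_{k-1}]\le T^*$, which yields a $\lambda^{-1}$ tail bound.

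You also need the persistence step of Proposition~\ref{prop:reduce_to_strichartz}: control of $\|u\|_{L^r_tL^{p+1}_x}$ propagates $H^1_x$-bounds. Only then does the $L^2_x$-criterion rule out blow-up of the $H^1_x$-solution. Neither ingredient appears in your proposal.
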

Let us briefly comment on how the above advances the well-posedness theory of stochastic nonlinear Schr\"odinger equations compared to previous works:
\begin{enumerate}[label=(\roman*)]
    \item The local well-posedness of Schr\"odinger equations in $H^1_x$ with nonlinear noise (i.e., $q \neq 1$) extends the results from \cite{hornung_thesis} beyond the restriction \eqref{Eq35} to the full subcritical regime.
	\item In the case of linear noise ($q = 1$), we fill the same gap  for $p$. In the situation \ref{item_A} this yields also new local and global well-posedness results compared to~\cite{DeBouard_Debussche_H1} without imposing additional regularity and decay assumptions on the noise (as was done in~\cite{BRZ_2016} using a rescaling transformation).
    \item In the mass-subcritical case \ref{item_B}, we obtain global well-posedness without the second condition of~\eqref{Eq36} (as imposed in~\cite{hornung_SNLS}), which additionally coupled $q$ to $p$.  
\end{enumerate}
Moreover, as a consequence of our results, \cite[Theorems 3.6--3.7]{millet2025wellposednessfocusingstochasticnonlinear} should now hold unconditionally in the full $L^2_x$-critical and intercritical range, regardless of dimension.
Indeed, the authors write `we remark that our results also hold conditionally
(upon the local well-posedness)' in \cite[p.8, ll.35--36]{millet2025wellposednessfocusingstochasticnonlinear}.

Next to our results, we also expect our method to prove local well-posedness laid out in the following subsection to be of interest in the study of other stochastic PDEs.
\subsection{Strategy to prove local well-posedness}\label{SS:strategy_LWP} To obtain local well-posedness of \eqref{Eq100} in $H^1_x$, we establish  a stochastic variant of  Kato's approach \cite{kato} to treat deterministic nonlinear Schr\"odinger equations
by applying the Banach fixed point argument on the set
\begin{equation}\label{Eq105}
\Bigl\{ u \in  L^\infty(0,T;H^1_x)\cap  L^r(0,T; W^{1,p+1}_x) \,\Big|\, 
\|u \|_{L^\infty(0,T;H^1_x)\cap  L^r(0,T; W^{1,p+1}_x) }\le R
\Bigr\}
\end{equation}
(for a suitable choice of $p,r$) equipped with the metric induced by the norm of 
\begin{equation}\label{Eq106}
C([0,T];L_x^2)\cap  L^r(0,T; L^{p+1}_x).
\end{equation}
The advantage of this strategy is that only $C^1$-regularity of the nonlinearity is needed, in contrast to working with the norm topology of \eqref{Eq105}, which would require $C^2$-regularity instead (which would require $p \geq 2$ and $q \geq 2$). This is precisely the source of the gap of exponents in \eqref{Eq100} in the work \cite{DeBouard_Debussche_H1} for dimensions $d\ge 4$. We remark that this idea has also recently proven fruitful in the numerical study of hyperbolic SPDEs  \cite{klioba_veraar}.

The cut-off parameter   $R$ present in \eqref{Eq105} serves two purposes: Firstly, it guarantees that the set~\eqref{Eq105} is complete with respect to the metric of~\eqref{Eq106} by the Banach--Alaoglu theorem. Secondly, it provides a control on $u$ and thereby allows to prove Lipschitz bounds on the nonlinearity for elements of \eqref{Eq105}. Together with Strichartz estimates for the linear Schr\"odinger evolution, this  results in a contraction estimate for sufficiently small times $T>0$. 
In a stochastic setting however, since the fluctuations of the noise $W$  in \eqref{Eq100} may be very large with positive probability, we cannot ensure that the solution $u$ remains $\P$-a.s.\ in a norm-bounded set like \eqref{Eq105} for any fixed time $T>0$. 
A common solution to this problem is to instead work with a suitable truncated nonlinearity, which coincides with the actual one for any $u$ from \eqref{Eq105}. This is exactly the strategy employed in \cite{DeBouard_Debussche_1999,hornung_SNLS} for $L^2_x$-solutions, where, e.g., the deterministic nonlinearity $|u|^{p-1}u$ is replaced by 
\begin{equation}\label{Eq80}
\varphi_R \bigl( \| u\|_{L^r(0,t; L^{p+1}_x)}\bigr)|u|^{p-1}u,\qquad  \varphi_R(s)  = \begin{cases}
	1, &s<  R,\\
	2-s/R , & s\in [R, 2R),
	\\
	0, &s\ge  2R.
\end{cases}
\end{equation}

Indeed, this effectively imposes a bound on $u$ in $L^r(0,T; L^{p+1}_x)$, while not affecting the nonlinearity until some positive stopping time $\tau>0$. Based on the stochastic Strichartz estimates derived in \cite{stoch_strichartz}, this allowed the author of \cite{hornung_SNLS} to prove the existence and uniqueness of local $L^2_x$-solutions even for mass-critical nonlinearities.

It turns out to be quite delicate to apply a truncation in Kato's setting.
The main difficulty is that one seemingly requires a truncation which bounds the stronger norms appearing in \eqref{Eq105} but which is still Lipschitz with respect to the weaker norm \eqref{Eq106}.
Note that the truncation 
\[
\varphi_R\bigl( \| u\|_{L^r(0,t; W^{1,p+1}_x)}\bigr)|u|^{p-1}u,
\]
in the spirit of \eqref{Eq80},
does not have the required properties, since it is not Lipschitz with respect to $L^r(0,T;L^{p+1}_x)$.
We alleviate this issue by precisely analyzing the  argument of \cite{kato}: Firstly, one splits the nonlinearity  into a Lipschitz part $F_1$ and a polynomial-like part $F_2$, so that the Lipschitz nonlinearity $F_1$ does not require any truncation. The estimate on $F_2(u) -F_2(v)$ is then obtained  in terms of $\|u-v \|_{L^r(0,T; L^{p+1}_x)}$ with a growth factor depending on $\|u\|_{ L^\infty(0,T;L^{p+1}_x)} \vee \|v\|_{ L^\infty(0,T;L^{p+1}_x)}$,
and therefore only these norms need to be controlled by our  truncation.
Fortunately, this is possible by truncating the $L^{p+1}_x$-norm pointwise in $t$ (an operation which is Lipschitz with respect to $L^r(0,T;L^{p+1}_x)$).
These observations yield a truncation-based proof of \cite{kato}, which, as we demonstrate in this manuscript, can be made probabilistic using the stochastic  Strichartz estimates of \cite{stoch_strichartz}. 

\subsection{Outlook: The energy-critical case and a problem concerning truncation}
To highlight the subtle role of finding suitable truncation operators when establishing stochastic versions of Kato's approach, we  discuss  possible extensions of our local well-posedness result regarding \eqref{Eq100} with energy-critical nonlinearities. 
In the deterministic setting, local well-posedness in critical spaces was shown  in~\cite{CW_crit} using a version of Kato's approach.
The argument makes use of different function spaces compared to~\eqref{Eq105}--\eqref{Eq106}, and it is unclear whether a suitable truncation operator exists.
More precisely, one could ask the following.

\begin{problem}\label{prob}
    For $d \geq 3$, let $r = 2d/(d-2)$, $q = 2d/(d-2+4/d)$, $\tilde{q} = 2d/(d-4+4/d)$, $X = L^r(0,T;L^{q}_x(\bbR^d))$ and $Y = L^r(0,T;L^{\tilde{q}}_x(\bbR^d))$.
    Does there exist a mapping $\phi \colon X \cap Y \to X \cap Y$ which satisfies the following properties:
    \begin{enumerate}
        \item (identity) $\phi(f) = f$ whenever $\norm{f}_Y \leq 1$,
        \item (truncation) $\norm{\phi(f)}_Y \lesssim 1$,
        \item (Lipschitz) $\norm{\phi(f) - \phi(g)}_X \lesssim \norm{f - g}_X$.
    \end{enumerate}
\end{problem}
We expect that an affirmative answer to Problem \ref{prob} would allow to adapt the proof of \cite{CW_crit} and obtain local well-posedness for \eqref{Eq100} with energy-critical nonlinearities.

\subsection{Applications}\label{SS:motivation}
In addition to their fundamental role in the study of dispersive PDEs, Schr\"odinger equations arise in various applications. In quantum mechanics, the wave function of a particle is described by a linear Schr\"odinger equation usually supplemented with a potential term, which accounts for environmental forces   \cite{griffiths2018introduction}. Nonlinear Schr\"odinger equations arise in the modeling of
 Bose--Einstein condensation, where they are referred to as the Gross--Pitaevskii equation \cite{pitaevskii2016bose}, or in the description of electromagnetic fields traveling through nonlinear media. Indeed, the pulse envelope $U$ around a plane wave of an optical pulse traveling through a disordered optical fiber can be shown to satisfy
 \begin{equation}\label{Eq9_new}
 	 \partial_y U(s,y) \,=\, i\partial_{ss} U(s,y) \,-i\, \Lambda |U(s,y)|^2 U(s,y),
 \end{equation}
 in a moving frame $(s,y)$, cf.\ \cite{boyd2003nonlinear}. The cubic  nonlinearity is due to an intensity-dependent refractive index prescribed by the Kerr effect.
 Corresponding stochastic Schr\"odinger equations arise in the modeling of open quantum systems \cite{Barchielli_Gregoratti_book}, temperature effects on Bose--Einstein condensation  \cite{Review_Stoch_GP} or excited atoms within an optical medium resulting in Raman scattering \cite{agrawal2000nonlinear, essiambre2008capacity}. If the material coefficient $\Lambda$ in \eqref{Eq9_new}  varies moreover quickly and randomly in $y$, the latter equation turns into \eqref{Eq100}, cf.\ \cite{abdullaev2000soliton}. Next to its mathematical interest, we have the latter application in mind  when studying Schr\"odinger equations with fluctuating nonlinearity. 

\subsection{Organization of the manuscript}
The proof strategy laid out in Section \ref{SS:strategy_LWP}
is carried out in Section~\ref{sec_local} resulting in Theorem \ref{Thm:WP_H1}, i.e., the local well-posedness of \eqref{Eq100} as stated in Theorem \ref{thm_intro}. To obtain global solutions for mass-subcritical (deterministic and stochastic) nonlinearities we derive an improved blow-up condition in terms of the $L^2_x$-norm in Section \ref{Sec_APE}, see Proposition \ref{prop:global}.
In the last Section \ref{Sec_quant}, we recall by now standard estimates on the mass and energy of a solution to \eqref{Eq100}, which together with the aforementioned blow-up conditions result in the assertions regarding global well-posedness from Theorem \ref{thm_intro}.

\section{Notation}
Throughout the article we fix a parameter $d\in \N$ denoting the spatial dimension. We let $(\Omega, \mathfrak{A},\bbP )$ be a complete probability space equipped with a filtration $\cF$ satisfying the usual conditions. Accordingly, we let $(\beta_k)_{k\in \N}$ be a family of independent $\cF$-Brownian motions.

We use standard notation for function spaces on the whole space $\R^d$. E.g., for the Lebesgue space of $p$-integrable functions with respect to the Lebesgue measure $\dx$ we write $L^p(\R^d; \K)$ equipped with the norm
\[
\|f\|_{L^p(\R^d; \K)} \,=\, \begin{cases}
\bigl(\int_{\R^d} |f(x)|^p\,\dx\bigr)^{1/p}, & p\in [1,\infty),\\
\esssup_{x\in \R^d} |f(x)|,
& p=\infty,
\end{cases}
\]
 where $\K\in \{\R, \C\}$. For $k\in \N_0$ and $p\in [1,\infty)$, we write $W^{k,p}(\R^d;\K)$ for the Sobolev space consisting of functions $f$ such that
 \begin{equation}\label{Eq17}
 \|f\|_{W^{k,p}(\R^d;\K)}^p \,=\, \sum_{|\alpha|\le k} \|\partial_\alpha f \|_{L^p(\R^d; \K)}^p < \infty,
 \end{equation}
where the above sum runs over multiindices $\alpha\in \N_0^d$ and $\partial_\alpha$ denotes the distributional derivative. For $p=\infty$, we replace the sum in \eqref{Eq17} by a maximum.
Lastly, for the complex-valued spaces, which are used frequently in this manuscript, we abbreviate
\begin{align*}
	L^{p}_x \coloneqq L^{p}(\R^d;\C), \qquad
	W^{k,p}_x \coloneqq W^{k,p}(\R^d;\C), \qquad
	H^k_x \coloneqq W^{k,2}(\R^d;\C), 
\end{align*}
for $k \in \N_0$, $p \in [1,\infty]$ to ease notation. The space of Schwartz functions is denoted by $\mathscr{S}(\R^d)$ and its topological dual, the space of Schwartz distributions, by $\mathscr{S}'(\R^d)$. 
Concerning mostly the nonlinearities in the Schr\"odinger equation, we write $C^k(\R^2;\C)$ for the set of $k$-times continuously differentiable functions from $\R^2$ to $\C$ for $k\in \N$. We remark that while the nonlinearities in the Schr\"odinger equation take complex arguments, we use $\R^2 \eqsim \C$ as domain to stress that we require real rather than complex differentiability.

We employ the usual notation for vector-valued function spaces: If $(\cS,\cB, \mu)$ is a measure space and $X$ is  a  Banach space, we denote by $L^p(\cS; X)$ the Bochner space of strongly measurable $X$-valued functions, carrying the norm
\[
\|f\|_{L^p(\cS; X)}^p \,=\, \int_\cS \|f\|_X^p \d \mu,
\]
for $p\in [1,\infty)$ with the usual modification for $p=\infty$. If $(\cS,\cB, \mu)$ is the underlying probability space $(\Omega, \mathfrak{A},\bbP )$ we also use the shorthand notation $ L^p_{\Omega}(X)$ for $ L^p(\Omega;X)$.  If on the other hand $(\cS,\cB, \mu) = ([0,T],\mathfrak{B}([0,T]), \dt)$ we make use of the notation $L^p(0,T;X)$ instead. 
For the  space of continuous $X$-valued functions we write $C([0,T];X)$ and equip it with the norm
\[
\|f\|_{C([0,T];X)}\,=\, \sup_{t\in [0,T]}\|f(t)\|_X.
\]
For the closed subspace of progressively measurable random variables of $L_\Omega^p( L^q(0,T;X) ) $ and $L_\Omega^p( C([0,T];X) ) $ (or variants thereof) we write
$L_{\mathcal{P}}^p( L^q(0,T;X) )$ and $ L_{\mathcal{P}}^p( C([0,T];X)  )$, respectively. More generally, if $\tau$ is a stopping time, $L_{\mathcal{P}}^p( L^q(0,\tau;X) )$ denotes the set of all progressively measurable $X$-valued processes $f$ such that $\mathbf{1}_{[0,\tau]}f \in L^p_\mathcal{P}( L^q ( [0,\infty) ;X) )$, with corresponding norm. If $\tau$ is $\P$-a.s.\ finite, $ L_{\mathcal{P}}^p( C([0,\tau];X)  )$ is then the subset of processes in $L_{\mathcal{P}}^p( L^\infty(0,\tau;X) )$, which admit $\P$-a.s.\ an $X$-continuous version on $[0,\tau]$. 
If $\cS=\N$ is equipped with the counting measure, we write $\ell^p(X)$ for $L^p(\N; X)$ and $\ell^p$ for $\ell^p(\C)$. We remark that in any of these situations we denote by $L^0$ the set of measurable mappings.

Lastly, for two Banach spaces $X$ and $Y$ we denote the space of bounded linear operators by $L(X,Y)$ and for Hilbert spaces $G,H$ we denote the space of Hilbert--Schmidt operators by $L_2(G,H)$. The intersection of two Banach spaces is as usual equipped with the norm $\|\cdot \|_{X\cap Y} = \|\cdot \|_X + \|\cdot \|_Y$.

\section{Local well-posedness}\label{sec_local}
The aim of this section is to prove local in time well-posedness of stochastic nonlinear   Schr\"odinger equations of the form
\begin{equation}
\label{eq:general_SNLS}\begin{cases}
    \d  u = [i \Delta u +  F(x,u)] \d t + G(u) \d W(t),\\
    u(0)= u_0
	\end{cases}
\end{equation}
for initial data from the energy space $H^1_x$. We remark that, in contrast to the Stratonovich equation \eqref{Eq100} considered in the introduction, the above is interpreted in the sense of It\^o's theory of stochastic integration. Accordingly, we allow for a space dependent  nonlinearity $F$ to account also for possible correction terms when rewriting \eqref{Eq100} in its It\^o-form
\begin{equation}
\label{eq:snls_ito}
 \d u \,=\, i\bigl[\Delta u +\lambda|u|^{p-1}u  \bigr]\, \d t\,+\,\Psi|u|^{q-1} u\,  \d t \,+\,  i|u|^{(q-1)/2}u \, \d W,
\end{equation}
where we have abbreviated
\begin{equation}
    \label{eq:psi_correction}
    	\Psi \,=\, \textstyle \frac{q-1}{8} \sum_{k\in \N} 
	|\phi_k|^2\,-\, \frac{q+3}{8}  \sum_{k\in \N} 
	\phi_k^2.
\end{equation}

Before stating our assumptions, we recall the identification $\C \eqsim \R^2$ and that the notation $C^1(\R^2;\C)$ stresses that we assume differentiability in the real sense. We also remind ourselves that $(\beta_k)_{k\in \N}$ is a sequence of independent $\cF$-Brownian motions. 
\begin{assumption}[Local well-posedness in $H^1$]
    \label{ass:nonlinear} 
    There exist $p \in (1,1 + \frac{4}{(d - 2)_+})$, $C<\infty$, $N\in \N$ and \begin{equation}\label{eqn:nu_exp}
    	\begin{cases}
    		\nu \ge \max\{2,d\}, & d\ne 2,\\
    		\nu>2  , & d=2,
    	\end{cases}
    \end{equation}such that the following holds:
    \begin{enumerate}[label=(\roman*)]
    	\item \label{ass:nonlinear_I}  $u_0\in L_{\Omega}^0(H^1_x)$ is $\cF_0$-measurable.
    	\item \label{ass:nonlinear_II}  We have the decomposition
    	\begin{equation}\label{eqn_f_tilda_equals}
    		F(x,u) \,=\,\sum_{n=1}^N \Psi_n (x) F^{(n)}(u)
    	\end{equation}
    	for functions 
    	$F^{(n)} \in C^1(\R^2;\C)$ with  $F^{(n)}(0) = 0$ as well as
    	\begin{align*}
    		\abs{(F^{(n)})'(x)} &\,\leq \, C \abs{x}^{p-1},\qquad |x|\,\ge\, 1,
    	\end{align*}
	and $\Psi_n \in L_x^\infty$ is such that $\nabla \Psi_n \in L^\nu(\R^d;\C^d)$, for $n\in \{1,\dots ,N\}$.
	\item  \label{ass:nonlinear_G} We have	$G\in C^1(\R^2;\C)$ with  $G(0) = 0$ and
	\begin{align*}
		\abs{G'(x)} &\,\leq \, C \abs{x}^{\frac{p-1}{2}}, \qquad |x| \,\ge\, 1.
	\end{align*}
    \item \label{ass:nonlinear_III} The driving noise admits the expansion
    \begin{equation}\label{Eq15}
    	W(t)\,=\, \sum_{k\in \N} \phi_k \beta_k(t)
    \end{equation}
for a sequence $(\phi_k)_{k\in \N}\in L^\infty(\R^d;\ell^2)$ satisfying additionally $(\nabla \phi_k)_{k\in \N}\in L^{\nu}(\R^d;\ell^2(\C^d))$.
\end{enumerate}
\end{assumption}

\begin{remark} \label{rem:subcritical}
	The  Assumptions \ref{ass:nonlinear}~\ref{ass:nonlinear_I}--\ref{ass:nonlinear_III} express that the nonlinearities  are \emph{subcritical} in terms of scaling. Indeed, Assumption \ref{ass:nonlinear}~\ref{ass:nonlinear_I} specifies that we work in the energy space $H^1_x$ which (neglecting lower order contributions) is preserved under the scaling $(t,x)=(\epsilon^2\tilde{t},\epsilon \tilde{x})$ of the linear operators in \eqref{eq:general_SNLS} if we also rescale $u=\epsilon^{1-\frac{d}{2}}\tilde{u}$.  In these new variables, the SPDE \eqref{eq:general_SNLS} reads
    \begin{equation}\label{Eq84}
    \d  \tilde{u}\, =\, \bigl[i \Delta \tilde{u}\, + \,\epsilon^{\frac{d}{2}+1}F(\epsilon\tilde{x},\epsilon^{1-\frac{d}{2}}\tilde{u})\bigr] \d \tilde{t}\, + \,\epsilon^{\frac{d}{2}}G(\epsilon^{1-\frac{d}{2}}\tilde{u}) \d \tilde{W}(\tilde{t}),
    \end{equation}
    where we also replaced $W=\epsilon\tilde{W}$ to ensure that the noise is again a Wiener process. Consequently, nonlinearities of the form $|F^{(n)}(u)|\sim |u|^p$ and $G(u)\sim|u|^{\frac{p+1}{2}}$ vanish as $\epsilon\searrow 0$ if $p<1+\frac{4}{(d-2)_+}$. We deduce that under Assumption \ref{ass:nonlinear}~\ref{ass:nonlinear_II}--\ref{ass:nonlinear_III}  the dispersive effects due to the linear part of \eqref{eq:general_SNLS} are stronger than the nonlinearities, at least on a microscopic level. This is why the latter are called  energy-subcritical in this case.
 \end{remark}
\begin{remark}
	The previous remark highlights also the optimality of the exponent from \eqref{eqn:nu_exp}, at least for $d\ge 3$, where we can take $\nu=d$. Indeed, the sizes of  $\Psi_n$ in $L^\infty_x$ and $\nabla \Psi_n $ in $L^d(\R^d;\C^d)$ are invariant under the rescaling of space in \eqref{Eq84}, and the same applies to the noise coefficients $(\phi_k)_{k\in \N}$.
\end{remark}
\begin{remark}\label{rem:noise_ex}
	Any $Q$-Wiener process in a Hilbert space ${H}\subset \mathscr{S}'(\R^d)$ admits an expansion of the form \eqref{Eq15} with 
	$\phi_k =\sqrt{\lambda_k}f_k$ for an orthonormal basis  $(f_k)_{k\in \N}$ of $H$ consisting  of eigenfunctions of $Q$ with eigenvalues $(\lambda_k)_{k\in \N}\in \ell^1$, see \cite[Section 2.1]{Liu_Rockner_intro}. In this case, Assumption \ref{ass:nonlinear}~\ref{ass:nonlinear_III} can be read off the eigenfunctions and their corresponding eigenvalues.
	As a second example we consider
	\[
	W(t) \,=\, \eta*\biggl(\sum_{k\in \N}   e_k \beta_k(t)\biggr),
	\]
	for an orthonormal basis $(e_k)_{k\in \N}$ of $L^2(\R^d)$ and a convolution kernel $\eta \in \mathscr{S}'(\R^d)$. In this case  the SPDE \eqref{eq:general_SNLS} has a spatially regularized space-time white noise on the right-hand side when written in differential form. We obtain the expansion \eqref{Eq15} for $\phi_k = \eta*e_k$, so that
	\begin{align*}
	\|\eta\|_{L^2_x}^2 \,&=\, \sup_{x\in \R^d}\sum_{k\in \N} \biggl|\int_{\R^d} 
	\eta(x-y)e_k(y)
	 \,\d y \biggr|^2\,=\, \|\phi\|_{L^\infty(\R^d;\ell^2)}^2,\\
	 \|\nabla \eta\|_{L^2(\R^d; \C^d)}^2 \,&=\, \sup_{x\in \R^d}\sum_{k\in \N} \biggl|\int_{\R^d} 
	 \nabla \eta(x-y)e_k(y)
	 \,\d y \biggr|^2\,=\, \|\nabla\phi\|_{L^\infty(\R^d;\ell^2(\C^d))}^2. 
	\end{align*}
	In particular, if $\eta\in H^1_x$ then  Assumption \ref{ass:nonlinear}~\ref{ass:nonlinear_III} holds for $\nu = \infty$.
\end{remark}
\begin{remark}
    \label{rem:stoch_int}
    The most straightforward way to understand stochastic integrals with respect to $W$ satisfying Assumption \ref{ass:nonlinear}~\ref{ass:nonlinear_III} is to define them with respect to the underlying cylindrical Wiener process $\beta$ on $\ell^2$ given by
\[
(\beta,a)_{\ell^2} \,=\, \sum_{k\in \N} a_k\beta_k.
\]
Accordingly, we represent the space of Hilbert--Schmidt operators $L_2(\ell^2,H)$ for a separable Hilbert space $H$ as the sequence space $\ell^2(H)$ by identifying an operator  $\Gamma\colon \ell^2\to H$ with the sequence $(\Gamma e_k)_{k\in \N}$, where $e_k$ is the $k$-th unit vector. Then stochastic integrands 
\[
\Gamma\, \in\, L^0_{\mathcal{P}}(L^2(0,T;\ell^2(H)))
\]
are admissible to define the It\^o integral
\[
\int_0^\cdot \Gamma \,\d \beta \,\in\, L^0_\mathcal{P}(C([0,T]; H)),
\]
as an adapted local martingale.
Now for progressively strongly measurable
\[
\tilde{\Gamma} \colon \Omega\times [0,T] \to L(L^{\infty}_x\cap \dot{W}^{1,\nu}_x,H),
\]
we can define 
\[
\int_0^\cdot \tilde{\Gamma} \,\d W \,\coloneq\, \int_0^\cdot \tilde{\Gamma}\phi \,\d \beta \,\in \, L_\mathcal{P}^0(C([0,T]; H)),
\]
as soon as
\[
\sum_{k\in \N} \int_0^T \|\tilde{\Gamma} \phi_k \|_{H}^2\,\d t \,=\,\|\tilde{\Gamma}\phi \|_{L^2(0,T;\ell^2(H))}^2 \,<\,\infty,\qquad \text{$\P$-a.s.}
\]
\end{remark}

While parabolic (stochastic) PDE feature a smoothing effect of the linear part, the dispersive linear part of \eqref{eq:general_SNLS} leads to a more subtle gain of integrability due to the fact that different Fourier modes travel with different velocities. Respecting the scaling of the differential operators, cf. Remark \ref{rem:subcritical}, we can expect the solution to exist in $L^r(0,T;W_x^{1,q})$ for the following range of exponents, when started from $H^1_x$, see, e.g., \cite[Section 2.3]{cazenave} or \cite[Section 2.3]{tao_book}.

\begin{definition}[Stochastically admissible]
    \label{def:admissible}
    A pair of exponents $(r,q)$ with $r \in [2,\infty]$ and $q\in [2,\infty)$ is called \emph{stochastically admissible} if 
    \begin{equation}
        \label{eq:admissible}
        \frac{2}{r} + \frac{d}{q} = \frac{d}{2}.
    \end{equation}
\end{definition}
\emph{For the purpose of this manuscript, we use the terms admissible and  stochastically admissible synonymously.}

Regarding the formulation {stochastically admissible}, 
it should be noted that while it is customary to exclude the endpoint $(d,r,q)=(2,2,\infty)$ in the definition of admissible exponents in the literature on deterministic Schr\"odinger equations, cf.\ \cite[Definition 2.3.1]{cazenave}, we additionally exclude the triple $(d,r,q)=(1,4,\infty)$ satisfying \eqref{eq:admissible}.
The latter is related to the failure of the vector-valued Burkholder--Davis--Gundy inequality in $L^\infty$, which is vital to the proof of the stochastic Strichartz estimates stated in Theorem \ref{thm:stoch_Strichartz} below.
Thus, the following ranges for the parameters $r$ and $q$ can be realized as a component of a (stochastically) admissible pair:
\begin{equation}\label{eq:characterization_exponents}
    \begin{cases}
        r\in (4,\infty] \; \iff \; q\in [2,\infty), &d=1,\\
        r\in (2,\infty] \; \iff \; q\in [2,\infty) , & d=2,\\
        r\in [2,\infty] \; \iff\; q\in [2,\frac{2d}{d-2}], & d>2.
    \end{cases}
\end{equation}
According to \eqref{eq:admissible}, the lower boundary of the interval for $r$ corresponds to the upper boundary for the interval for $q$ and vice versa.

In the following, the parameter $p$ is the growth exponent on the nonlinearities from Assumption \ref{ass:nonlinear}. Since we impose the condition $p\in (1,1+\frac{4}{(d-2)_+})$, we have $p+1 \in  (2, \frac{2d}{(d-2)_+})$, so that
\begin{equation}\label{eq:Sobolev_embedding}
1 - \frac{d}{2} \,>\, - \frac{d}{p+1} \qquad \text{and thus}\qquad
H^1_x\, \hookrightarrow \,L^{p+1}_x,
\end{equation}
 by the Sobolev embedding theorem.  Inspecting \eqref{eq:characterization_exponents} we deduce that there exists $r\in (2,\infty)$ such that $(r,p+1)$ is admissible.
 We also recall the Schr\"odinger group
given by
\begin{equation}\label{eq:Schrodinger_sg}
S(t) \psi \,=\, \mathscr{F}^{-1}\bigl(\exp( -4\pi^2 i |\xi|^2 t) \mathscr{F}   \psi \bigr),\qquad t\in \R,
\end{equation}
for a Schwartz distribution $\psi \in \mathscr{S}'(\R^d)$, where we use the convention
\[
\mathscr{F} \psi(\xi ) \,=\, \int_{\R^d} \exp(-2\pi i x\cdot \xi)\psi (x)\,\d x.
\]
The latter is related to \eqref{eq:general_SNLS} since $u(t) = S(t)\psi$ solves the linear Cauchy problem $(\partial_t - i\Delta)u = 0$ with initial condition $u(0) = \psi$.
\begin{definition}[Maximal, unique local mild solution]\label{defi_local_mild_sol}Suppose that Assumption \ref{ass:nonlinear} holds and let $r$ be such that $(r,p+1)$ is admissible.
    \begin{enumerate}[label=(\roman*)]
        \item Let $\tau$ be a stopping time and $u\in C([0,\tau); H^1_x)\cap L_{\mathrm{loc}}^r([0,\tau);W_x^{1,p+1})$, $\mathbb{P}$-a.s., be progressively measurable. Then
        $(u,\tau)$ is called a \emph{local mild solution} to \eqref{eq:general_SNLS}, if, $\P$-a.s., the mild solution formula
        \begin{equation}\label{eq:mild_solution_SNLS}
		u(t) = S(t)u_0 + \int_0^t S(t-t')F(\cdot, u(t')) \d t' + \int_0^t S(t-t')G(u(t')) \d W(t'),
	\end{equation}
    holds for all $t\in [0,\tau)$.
    \item 
    A local mild solution $(u,\tau)$ is called \emph{maximal} and \emph{unique}, if for any other local mild solution $(v,\sigma)$ we have, $\P$-a.s.,\ $\sigma\le\tau$ and $u=v$ on $[0,\sigma)$.
    \end{enumerate}
\end{definition}
Assumption \ref{ass:nonlinear} ensures that all the integrals on the right-hand side of \eqref{eq:mild_solution_SNLS} exist $\P$-a.s.\ in $C([0,\tau);H^1_x)$, as can be seen from the proof of the main result of this section.
\begin{theorem}[Local well-posedness in $H^1$]\label{Thm:WP_H1}
	Suppose that Assumption \ref{ass:nonlinear} holds. Then there exists a maximal, unique  local mild solution $(u,\tau)$ to \eqref{eq:general_SNLS} in the sense of Definition \ref{defi_local_mild_sol}
	with $\tau>0$, $\P$-almost surely.
    The solution satisfies the following blow-up alternative:
	\begin{equation}\label{eq:blow_up}
		\PP[\Big]{\tau < \infty,\, \sup_{t\in [0, \tau)}\norm{u(t)}_{L^{p+1}_x} <\infty } \,=\, 0,
	\end{equation}
	and $u \in L_{\mathrm{loc}}^{\tilde{r}}([0,\tau ) ;W^{1,\tilde{q}}_x)$, $\mathbb{P}$-a.s., for every admissible $(\tilde{r},\tilde{q}
    )$.
\end{theorem}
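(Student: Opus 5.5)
We plan to prove Theorem \ref{Thm:WP_H1} by a truncated fixed point argument in the spirit of Kato, following the strategy of Subsection \ref{SS:strategy_LWP}. Fix $r$ with $(r,p+1)$ admissible, and let $\theta_R(s)$ denote the radial retraction $s\mapsto s\min\{1,R/|s|\}$. The Lipschitz part needs no truncation, so we first split each $F^{(n)}=F^{(n)}_1+F^{(n)}_2$ and $G=G_1+G_2$ with a smooth cut-off at $|u|\sim 1$. Here $F_1^{(n)},G_1$ are globally Lipschitz with $F_1(0)=G_1(0)=0$. The parts $F_2^{(n)},G_2$ satisfy $|F_2'(u)|\lesssim |u|^{p-1}$ and $|G_2'(u)|\lesssim|u|^{(p-1)/2}$ for all $u$.

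We then define the truncated nonlinearities pointwise in time by
\[
F_R(t,u)=\sum_n \Psi_n\bigl[F_1^{(n)}(u(t))+F_2^{(n)}(T_R u(t))\bigr],\qquad T_R u(t)=\frac{\theta_R(\|u(t)\|_{L^{p+1}_x})}{\|u(t)\|_{L^{p+1}_x}}\,u(t),
\]
and analogously $G_R$. The map $T_R$ is Lipschitz in $L^{p+1}_x$ with constant $2$, uniformly in $t$, and hence Lipschitz on $L^r(0,T;L^{p+1}_x)$. It also preserves $W^{1,p+1}_x$-bounds, since it only rescales $u(t)$ by a factor in $[0,1]$. The truncated equation is posed on the space
\[
E_{T,K}=\bigl\{u\in L^2_{\mathcal P}(\Omega;L^\infty(0,T;H^1_x)\cap L^r(0,T;W^{1,p+1}_x)):\ \|u\|\le K\bigr\},
\]
equipped with the weaker metric of $L^2_\Omega(C([0,T];L^2_x)\cap L^r(0,T;L^{p+1}_x))$. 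This set is complete for that metric by weak-* lower semicontinuity of the strong norm (Banach--Alaoglu). For the noise terms we use the stochastic Strichartz estimates of \cite{stoch_strichartz} in the form: for admissible pairs,
\[
\Bigl\|\int_0^\cdot S(\cdot-s)\Gamma\,\d\beta\Bigr\|_{L^2_\Omega(L^r(0,T;L^{q}_x)\cap C([0,T];L^2_x))}\lesssim \|\Gamma\|_{L^2_\Omega(L^2(0,T;\ell^2(L^2_x)))},
\]
together with the derivative version obtained by applying it to $\nabla$ of the integrand.

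\textbf{Self-map estimate.} We take a derivative of $F_R(u)$ and use H\"older with $\|\Psi_n\|_\infty$ and $\|\nabla\Psi_n\|_{L^\nu}$. This gives terms $|T_Ru|^{p-1}|\nabla u|$ and $|\nabla\Psi_n||T_Ru|^{p}$, where $\|T_Ru(t)\|_{L^{p+1}}\le R$. The first is bounded in the dual Strichartz space $L^{r'}L^{(p+1)'}$ by $T^{1-2/r}R^{p-1}\|\nabla u\|_{L^rL^{p+1}}$. The second is bounded via Sobolev embedding $H^1\hookrightarrow L^{p+1}$ together with $\nabla\Psi_n\in L^\nu$; the condition on $\nu$ is exactly what makes $\nabla\Psi_n\, u\in L^2_x$ from $u\in H^1_x$. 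The factor $T^{\theta}$ with $\theta>0$ comes from subcriticality. For $G_R$ we need $\nabla (G(u)\phi_k)\in \ell^2(L^2_x)$ in $L^2_t$. This is the term $|T_Ru|^{(p-1)/2}|\nabla u|\,\|\phi\|_{\ell^2}$, handled by H\"older as $L^{r}L^{p+1}\times L^{\infty}L^{p+1}$, since $(p-1)/2\cdot 2+2\cdot 2/(p+1)$ fits. The remaining term $|G(u)||\nabla\phi|_{\ell^2}$ is handled analogously using $\nu$.

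\textbf{Contraction in the weak metric.} The difference $F_2(T_Ru)-F_2(T_Rv)$ is bounded by $(|T_Ru|+|T_Rv|)^{p-1}|T_Ru-T_Rv|$ in $L^{r'}L^{(p+1)'}$, which gives $T^{\theta}R^{p-1}\|u-v\|_{L^rL^{p+1}}$; this uses only $C^1$ regularity. The stochastic term is treated in $L^2_x$ similarly. Choosing $K$ as a multiple of $\|u_0\|_{H^1}$ and $T$ small, both depending on $R$, yields a unique fixed point $u_R$. We localise $u_0$ first by replacing it with $u_0\mathbf 1_{\|u_0\|_{H^1}\le M}$, and iterate on intervals of length $T$ since $T$ does not depend on $u_0$ once $K$ is fixed. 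Together this gives global solutions $u_{R,M}$ of the truncated problem. The extra regularity $L^{\tilde r}W^{1,\tilde q}$ for every admissible pair $(\tilde r,\tilde q)$ follows by reapplying the Strichartz estimates.

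\textbf{Localisation and blow-up.} Let $\tau_R=\inf\{t:\|u_R(t)\|_{L^{p+1}}\ge R\}$; on $[0,\tau_R)$ the truncation is inactive. Uniqueness for the truncated problems, with continuity in $H^1_x$ and a stopped version of the contraction, shows $u_R=u_{R'}$ on $[0,\tau_R\wedge\tau_{R'}]$. We then set $\tau=\lim_R\tau_R$ (also letting $M\to\infty$), which gives a maximal unique local solution with $\tau>0$ a.s. by continuity at $t=0$. On the event $\{\tau<\infty,\ \sup\|u\|_{L^{p+1}}<\infty\}$ we have $\tau=\tau_R$ for some $R$, which has probability zero for the maximal solution. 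Hence the blow-up alternative \eqref{eq:blow_up} holds.

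We expect the main obstacle to be the $H^1$ bound of the noise term. Here the stochastic Strichartz estimate forces $L^2_t$ integrability while only $L^\infty_tL^{p+1}$ is controlled by truncation, so the exponents must close for all $p<1+4/(d-2)_+$ and all $\nu$ in \eqref{eqn:nu_exp}. We would first try H\"older in time with $r>2$ and use the $T^{1/2-1/r}$ gain.
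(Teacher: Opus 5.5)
Your proposal is correct and essentially follows the paper's proof. You use the same splitting $F^{(n)}=F^{(n)}_1+F^{(n)}_2$, $G=G_1+G_2$, the same pointwise-in-time $L^{p+1}_x$-truncation $\Phi_R$ with Lipschitz constant $2$, the same Kato-type fixed point on a strong-norm ball with the weak metric of $L^2_{\mathcal P}(C([0,T];L^2_x)\cap L^r(0,T;L^{p+1}_x))$, and the same localisation via $\tau_R=\inf\{t:\|u_R(t)\|_{L^{p+1}_x}\ge R\}$.

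The deviations are minor. You localise the initial data by $u_0\mathbf{1}_{\{\|u_0\|_{H^1_x}\le M\}}$ instead of the paper's change of measure $\mathbb{Q}\propto e^{-\|u_0\|_{H^1_x}}\mathbb{P}$. For the term $\nabla\Psi_n\,F_2^{(n)}(\Phi_R u)$, the paper bounds $\|F_2^{(n)}(\Phi_R u)\|_{W^{1,1+1/p}_x}\lesssim R^{p-1}\|u\|_{W^{1,p+1}_x}$ and uses the embedding $W^{1,1+1/p}_x\hookrightarrow L^\mu_x$ with $1/(1+1/p)=1/\mu+1/\nu$. That is the right tool here, not $H^1_x\hookrightarrow L^{p+1}_x$ as you suggest, because it keeps the bound linear in the strong norm.
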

By \eqref{eq:Sobolev_embedding}, the blow-up criterion \eqref{eq:blow_up} is stronger (i.e., easier to check) than one might expect from the fact that the initial value lies in $H_x^1$.

The rest of this section is devoted to the proof of Theorem \ref{Thm:WP_H1} and structured as follows: In Subsection \ref{Sec:Strichartz} we recall deterministic and stochastic Strichartz estimates, while we provide estimates on the nonlinearities $F^{(n)}$ and $G$ in Subsection \ref{Sec:nonlinearities}.  In Subsection \ref{Sec:truncation} we derive Lipschitz estimates on the truncation map in $L^{p+1}_x$ and  in Subsection \ref{Sec:WP_cutoff} well-posedness results for truncated versions  of \eqref{eq:general_SNLS}. Using that suitably truncated equations coincide with \eqref{eq:general_SNLS} on bounded subsets of $L^{p+1}_x$, we  deduce Theorem \ref{Thm:WP_H1} in Subsection \ref{Sec:Transf_to_orig_prob}.

\subsection{Deterministic and stochastic Strichartz estimates}
\label{Sec:Strichartz}
The Schr\"odinger group 
\eqref{eq:Schrodinger_sg} defines a group of unitary operators  on $ L^2_x $ with the 
skew-adjoint generator $i\Delta \colon
H_x^2 \to L_x^2$, see \cite[Section 2.1]{cazenave} for details. Since $S(t)$ commutes  with differentiation in space, it follows that
\begin{equation}\label{Eq112}
    \|S(\cdot )\psi  \|_{C([0,T]; H_x^{k})} \,=\, \|\psi \|_{H^k_x},
\end{equation}
for $k\in \{0,1\}$, $T\in (0,\infty)$ and any $\psi\in H^k_x$. 
The space on the left-hand side corresponds to the admissible pair $(\infty,2)$, the following generalizes \eqref{Eq112}  to all admissible pairs. 

\begin{theorem}[Homogeneous Strichartz estimates]
\label{thm:hom_Strichartz}
Let $(r,q)$ be an admissible pair. Then there exists a constant $\gamma<\infty$, such that 
\begin{align}\label{eq:homogeneous_Straichartz}
        \norm{S(\cdot)\psi}_{L^r(0,T;W^{k,q}_x)} \, &\leq \,\gamma \norm{\psi}_{H^k_x}, 
\end{align}
for all $k \in \{0,1\}$, $T \in (0,\infty)$ and $\psi \in H^k_x$.
\end{theorem}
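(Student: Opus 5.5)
The plan is to reduce the estimate to the classical homogeneous Strichartz estimate for $k=0$, via the dispersive estimate and the $TT^*$ argument. Then we lift it to $k=1$ using that $S(t)$ commutes with derivatives.

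\emph{Step 1: dispersive estimate.} From the explicit kernel of $S(t)$, a Gaussian with complex variance, we have $\|S(t)\psi\|_{L^\infty_x}\lesssim |t|^{-d/2}\|\psi\|_{L^1_x}$. Combined with unitarity on $L^2_x$, Riesz--Thorin interpolation gives
\[
\|S(t)\psi\|_{L^q_x}\lesssim |t|^{-d(\frac12-\frac1q)}\|\psi\|_{L^{q'}_x}, \qquad q\in[2,\infty].
\]

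\emph{Step 2: $TT^*$ argument.} Set $T\psi=S(\cdot)\psi$, viewed as a map $L^2_x\to L^r(\R;L^q_x)$. Boundedness of $T$ is equivalent to boundedness of $TT^*f=\int_\R S(t-s)f(s)\,\d s$ from $L^{r'}(\R;L^{q'}_x)$ to $L^r(\R;L^q_x)$. By Step 1,
\[
\|TT^*f(t)\|_{L^q_x}\lesssim \int_\R |t-s|^{-2/r}\|f(s)\|_{L^{q'}_x}\,\d s,
\]
using the admissibility relation \eqref{eq:admissible}, which gives $d(\frac12-\frac1q)=\frac2r$.
\begin{itemize}
\item For non-endpoint pairs with $r\in(2,\infty)$ we have $0<2/r<1$, and the Hardy--Littlewood--Sobolev inequality in time bounds the right-hand side in $L^r$ by $\|f\|_{L^{r'}(L^{q'})}$, since $1+\frac1r=\frac1{r'}+\frac2r$.
\item The pair $(r,q)=(\infty,2)$ is just \eqref{Eq112}.
\end{itemize}
The definition of stochastic admissibility excludes $(2,\infty)$ for $d=2$ and $(4,\infty)$ for $d=1$, so the only remaining case is the endpoint $r=2$, $q=2d/(d-2)$ for $d\ge3$. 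Here HLS fails, and this is the main obstacle. I would not reprove it but invoke the Keel--Tao endpoint theorem, which gives the same bound via a bilinear dyadic decomposition in $|t-s|$ and atomic interpolation. Restricting the time interval from $\R$ to $[0,T]$ only decreases the norm, so $\gamma$ is independent of $T$. This settles the case $k=0$.

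\emph{Step 3: $k=1$.} For $\psi\in H^1_x$ we have $\partial_j S(t)\psi=S(t)\partial_j\psi$, since both are Fourier multipliers. Applying the $k=0$ bound to $\psi$ and to each $\partial_j\psi$ and summing over the components in \eqref{Eq17} gives
\[
\|S(\cdot)\psi\|_{L^r(0,T;W^{1,q}_x)}\lesssim \|\psi\|_{L^2_x}+\sum_j\|\partial_j\psi\|_{L^2_x}\lesssim\|\psi\|_{H^1_x}.
\]
Taking the maximum of the finitely many constants yields a single $\gamma$ for both $k\in\{0,1\}$.
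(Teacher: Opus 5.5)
Your proposal is correct and follows essentially the same route as the paper. The paper gives no proof of its own but cites Cazenave (dispersive estimate, $TT^*$ with Hardy--Littlewood--Sobolev for the non-endpoint pairs, and commutation of $S(t)$ with derivatives for $k=1$) and Keel--Tao for the endpoint $(2,\frac{2d}{d-2})$, $d>2$; your case analysis of the remaining stochastically admissible pairs and your observation that restricting from $\R$ to $[0,T]$ keeps $\gamma$ independent of $T$ are both accurate.
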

For a proof we refer to \cite[Theorem 2.3.3 and Remark 2.3.8]{cazenave} and to \cite{keel_tao} for the endpoint case $(r,q) = (2,\frac{2d}{d-2})$, for  $d>2$.
Temporal convolutions against the Schr\"odinger group admit similar estimates:

\begin{theorem}[Inhomogeneous Strichartz estimates]
\label{thm:strichartz}Let $(r,q)$ and $(\tilde{r},\tilde{q})$ be admissible pairs. Then there exists a constant $\gamma<\infty$, such that 
\begin{equation*}
        \norm[\Big]{\int_0^{\cdot} S({\cdot}-t')h(t')\d t'}_{L^r(0,T;W^{k,q}_x)} \leq \gamma \norm{h}_{L^{\tilde{r}'}(0,T;W^{k,\tilde{q}'}_x)},
\end{equation*}
     for all $k \in \{0,1\}$, $T \in (0,\infty)$ and  $h \in L^{\tilde{r}'}(0,T;W^{k,\tilde{q}'}_x)$.
\end{theorem}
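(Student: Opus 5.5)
The plan is to reduce to the classical inhomogeneous Strichartz estimate for $k=0$ and then obtain $k=1$ by commuting derivatives with the Schr\"odinger group. These are standard results, cf.\ \cite[Theorem 2.3.3]{cazenave} and \cite{keel_tao}, so the proof amounts to recalling their structure.

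\textbf{Case $k=0$.} The operator $U(t)=S(t)$ is unitary on $L^2_x$. By the explicit kernel, it satisfies the dispersive bound
\[
\|S(t)\psi\|_{L^\infty_x}\lesssim |t|^{-d/2}\|\psi\|_{L^1_x}.
\]
Interpolating with the $L^2_x$-isometry gives $\|S(t)\psi\|_{L^{q}_x}\lesssim |t|^{-d(1/2-1/q)}\|\psi\|_{L^{q'}_x}$. We then run the $TT^*$ argument.
\begin{enumerate}[label=(\alph*)]
\item In the non-endpoint case $r>2$, the untruncated operator $h\mapsto \int_\R S(t-t')h(t')\,\d t'$ maps $L^{r'}(L^{q'}_x)$ to $L^r(L^q_x)$. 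This follows from Hardy--Littlewood--Sobolev in time, using the admissibility relation \eqref{eq:admissible}, which gives exactly $d(1/2-1/q)=2/r$.
\item The same $TT^*$ factorisation yields the homogeneous estimate of Theorem \ref{thm:hom_Strichartz} and its dual, $\|\int_\R S(-t')h(t')\,\d t'\|_{L^2_x}\lesssim \|h\|_{L^{\tilde r'}(L^{\tilde q'}_x)}$.
\item Composing the dual estimate with the homogeneous estimate gives the untruncated estimate for mixed pairs $(r,q)\neq(\tilde r,\tilde q)$.
\end{enumerate}

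The passage to the retarded operator $\int_0^t$, i.e.\ the restriction $t'<t$, is the main obstacle. For $r>\tilde r'$ we invoke the Christ--Kiselev lemma, which applies whenever $r>\tilde r'$; this covers all cases except $r=\tilde r=2$. When one of the pairs is the double endpoint $(2,2d/(d-2))$ with $d>2$, Christ--Kiselev fails, and we instead use the bilinear interpolation argument of \cite{keel_tao}, which treats the retarded operator directly. To handle the interval $[0,T]$, we extend $h$ by zero outside $[0,T]$ and restrict the output. This makes $\gamma$ independent of $T$.

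\textbf{Case $k=1$.} Since $S(t)$ is a Fourier multiplier, it commutes with $\partial_j$. Hence for $h\in L^{\tilde r'}(0,T;W^{1,\tilde q'}_x)$ we have
\[
\partial_j\int_0^t S(t-t')h(t')\,\d t'=\int_0^t S(t-t')\partial_j h(t')\,\d t'.
\]
This identity is justified first for Schwartz-valued $h$ and then extended by density together with the $k=0$ bound. Applying the $k=0$ estimate to $h$ and to each $\partial_j h$, and summing as in \eqref{Eq17}, gives the claim with a constant depending only on $d$ and the exponents.
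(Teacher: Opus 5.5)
Your proposal is correct and matches the paper, which proves this statement only by citation: \cite[Theorem 2.3.3 and Remark 2.3.8]{cazenave} for the non-endpoint pairs and \cite{keel_tao} for the endpoint pair $(2,\frac{2d}{d-2})$, with the case $k=1$ following from the commutation of $S(t)$ with spatial derivatives exactly as you describe. One small correction: Christ--Kiselev fails only when both pairs are the endpoint ($r=\tilde r=2$); when just one is, $r>\tilde r'$ still holds and only the endpoint homogeneous estimate requires \cite{keel_tao}, and Cazenave's own argument avoids Christ--Kiselev altogether by applying Hardy--Littlewood--Sobolev directly to the retarded kernel and then using duality and interpolation.
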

For a proof we  refer  again to \cite[Theorem 2.3.3 and Remark 2.3.8]{cazenave} and \cite{keel_tao}. We remark that in the case $(r,q) = (\infty,2)$ the $L^\infty(0,T;H_x^{k})$-norm can be replaced by $C([0,T];H_x^{k})$, as can be seen by approximating the right-hand side $h$ by smooth functions.

We also recall the stochastic Strichartz estimates addressing the stochastic convolution with $S(t)$, which were first derived in \cite[Section 3]{stoch_strichartz} to study stochastic Schr\"odinger equations on manifolds. They were subsequently generalized in \cite[Proposition 2]{hornung_SNLS} and we refer also to \cite[Appendix B]{gnann2024solitary} for a concise proof. To state them, we recall that the family $(\beta_k)_{k\in \N}$ of independent $\cF$-Brownian motions induces a  cylindrical Wiener process $\beta$ on $\ell^2$ as laid out in Remark \ref{rem:stoch_int}.

\begin{theorem}[Stochastic Strichartz estimates]\label{thm:stoch_Strichartz}
Let $(r,q)$ be an admissible pair. Then there exists a constant $\gamma<\infty$, such that
\begin{equation*}
      \norm[\Big]{\int_0^{\cdot} S(\cdot - t')h(t') \d \beta({t'})}_{L^{{ \zeta}}_{\mathcal P}(L^r(0,T;W^{k,q}_x))} \,\leq\, \gamma  \sqrt{ \zeta} \norm{h}_{ L^{{ \zeta } }_{\mathcal{P}}(L^2(0,T;\ell^2(H^k_x)))},
\end{equation*}
for all $k \in \{0,1\}$, $T \in (0,\infty)$,  $\zeta\in [2,\infty)$
 and  $h \in L^{\zeta}_{\mathcal{P}}(L^2(0,T;\ell^2(H^k_x)))$.
\end{theorem}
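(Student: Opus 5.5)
The statement is the stochastic Strichartz estimate for the stochastic convolution. I would prove it by factoring through the deterministic dual estimate and the Burkholder--Davis--Gundy inequality in the UMD/2-smoothing space $L^q_x$.

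\textbf{Step 1: reduction to $k=0$.} Since $S(t)$ commutes with spatial derivatives and $\|h\|_{\ell^2(H^1_x)}^2=\|h\|_{\ell^2(L^2_x)}^2+\|\nabla h\|_{\ell^2(L^2(\R^d;\C^d))}^2$, it suffices to treat $k=0$ and apply that case to $h$ and to $\partial_j h$.

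\textbf{Step 2: a non-adapted bound.} Write
\[
\int_0^t S(t-t')h(t')\,\d\beta(t') \,=\, S(t)\int_0^t S(-t')h(t')\,\d\beta(t').
\]
For fixed $t$ I would estimate $\int_0^t S(t-t')h(t')\,\d\beta(t')$ in $L^q_x$ by the BDG inequality in $L^q_x$, $q<\infty$. Since $L^q_x$ is a 2-smooth Banach function space, the square function is $\bigl\|\bigl(\int_0^t\sum_k|S(t-t')h_k(t')|^2\d t'\bigr)^{1/2}\bigr\|_{L^q_x}$, with constant $\lesssim\sqrt{\zeta}$ uniformly in $q$ bounded (the $\sqrt\zeta$ growth is the sharp BDG constant). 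By Fubini and Minkowski, since $\zeta\le r$ is not assumed, I would first take $L^r_t$ norms inside the expectation: for $\zeta\ge r$ use Minkowski to bring $L^\zeta_\Omega$ inside $L^r_t$, otherwise use a BDG statement directly in $L^r(0,T;L^q_x)$, which is again a 2-smooth lattice.

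\textbf{Step 3: reduction to a deterministic square-function Strichartz bound.} Applying BDG in the lattice $L^r(0,T;L^q_x)$, the problem reduces to showing
\[
\Bigl\|\Bigl(\int_0^T\sum_k \mathbf 1_{t'<t}|S(t-t')h_k(t')|^2\,\d t'\Bigr)^{1/2}\Bigr\|_{L^r_tL^q_x}\lesssim \|h\|_{L^2(0,T;\ell^2(L^2_x))}.
\]
Since $r,q\ge2$, Minkowski moves the $L^2_{t'}\ell^2_k$ norm outside, giving at most
\[
\Bigl(\int_0^T\sum_k\|\mathbf 1_{t'<\cdot}\,S(\cdot-t')h_k(t')\|_{L^r_tL^q_x}^2\,\d t'\Bigr)^{1/2}.
\]
The homogeneous estimate of Theorem~\ref{thm:hom_Strichartz}, with time translation invariance, bounds each summand by $\gamma\|h_k(t')\|_{L^2_x}$. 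This yields the claim with $\gamma\sqrt{\zeta}$.

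\textbf{Main obstacle.} The delicate point is the BDG inequality in the mixed lattice $L^r(0,T;L^q_x)$ with an integrand that depends on the outer time variable $t$ through $S(t-t')$, since this is not a martingale in $t$. I would handle it by fixing $t$, applying BDG for the martingale $s\mapsto\int_0^s S(t-t')h\,\d\beta$ evaluated at $s=t$ in $L^q_x$, and then integrating in $t$, using $\zeta\ge r$ via Minkowski. For $\zeta<r$, I would first prove the case $\zeta\ge r$ and then extrapolate, by Lenglart's inequality or a stopping-time argument, to any $\zeta\ge2$. This also explains why the endpoint $q=\infty$ is excluded: it is the space where BDG fails.
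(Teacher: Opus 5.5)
You have a genuine gap. Your Step~3 contains the right idea, but in the ``Main obstacle'' paragraph you abandon it for a route that does not close. After moving $L^\zeta_\Omega$ inside $L^r_t$ (legitimate for $\zeta\ge r$) and applying BDG in $L^q_x$ at fixed $t$, you are left with
\[
\Bigl\|\, \Bigl\| \Bigl( \int_0^t \sum_{k} \|S(t-t')h_k(t')\|_{L^q_x}^2 \,\d t' \Bigr)^{1/2} \Bigr\|_{L^\zeta_\Omega} \Bigr\|_{L^r(0,T)} .
\]
The homogeneous Strichartz estimate is a pathwise bound in $L^r_t$. It can therefore only be applied once $L^r_t$ sits inside $L^\zeta_\Omega$ again, and for $\zeta>r$ that swap is Minkowski's inequality in the wrong direction. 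So the fixed-$t$ argument only closes for $\zeta=r$ (via Fubini). Lenglart's inequality can only lower the exponent from there, which leaves the whole range $\zeta>r$ unproved, and that is exactly where the $\sqrt{\zeta}$-growth matters. Even for $\zeta<r$, Lenglart gives the factor $\bigl(\tfrac{2-\kappa}{1-\kappa}\bigr)^{1/\zeta}$ with $\kappa=\zeta/r$. This is not uniform as $\zeta\uparrow r$, so you would not get a $\zeta$-independent $\gamma$. The pair $(\infty,2)$ is also not covered by either of your routes.

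The obstacle you describe is not real, and removing it is the missing idea. Let $E=L^r(0,T;L^q_x)$ and $\Lambda(t')\psi=\mathbf{1}_{\{t'<\cdot\}}S(\cdot-t')\psi$. By Theorem~\ref{thm:hom_Strichartz} and time translation, $\|\Lambda(t')\|_{L(L^2_x,E)}\le\gamma$ uniformly in $t'$. Then $\sigma\mapsto M_\sigma=\int_0^\sigma\Lambda(t')h(t')\,\d\beta(t')$ is an $E$-valued martingale in the \emph{integration} variable. Its integrand is progressively measurable because $h$ is and $\Lambda$ is deterministic, and the outer time $t$ is merely a coordinate of $E$. (Your Lenglart step already implicitly needs this adapted process.) The terminal value $M_T$ is the stochastic convolution. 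So you can apply the BDG inequality in the $2$-smooth space $E$, whose constant is $\lesssim\sqrt{\zeta}$ (Pinelis, Seidler). Combine it with your square-function computation: in $E$ the $\gamma$-radonifying norm is equivalent to the square function, and Minkowski gives $\bigl\|\bigl(\sum_k|\Lambda(t')h_k(t')|^2\bigr)^{1/2}\bigr\|_E\le\gamma\|h(t')\|_{\ell^2(L^2_x)}$. This yields the estimate for all $\zeta\ge 2$ at once, with no Minkowski in $\Omega$ and no extrapolation. The paper does not reprove this theorem; as far as I recall, this is essentially the argument in the works it cites.

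The pair $(\infty,2)$ still needs separate treatment, since $L^\infty(0,T;H^k_x)$ is not $2$-smooth. There, use the factorization from your Step~2, the unitarity of $S(t)$ on $H^k_x$, and the maximal BDG inequality for the $H^k_x$-valued martingale $t\mapsto\int_0^tS(-t')h(t')\,\d\beta(t')$. This also gives the $C([0,T];H^k_x)$ version.
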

Also here we can use the $C([0,T]; H^k_x)$-norm  in the case $(r,q)=(\infty,2)$.
For later purposes, we rephrase the stochastic Strichartz estimates for stochastic convolutions with respect to a noise $W$ as in \eqref{Eq15}.

\begin{corollary}\label{lemma:stochStrich}Let $(r,q)$ be an admissible pair and  $\nu$ as in \eqref{eqn:nu_exp}. Then there exists a constant $\gamma<\infty$, such that
\begin{equation*}
	\norm[\Big]{\int_0^{\cdot} S(\cdot - t')g(t') \d W({t'})}_{L^{{ \zeta}}_{\mathcal P}(L^r(0,T;W^{k,q}_x))} \leq \gamma { \sqrt{ \zeta}}\bigl( \norm{\phi}_{L^{\infty}(\R^d;\ell^2)} +\delta_{k=1} \|\nabla \phi\|_{ L^{\nu}(\R^d;\ell^2(\C^d))}
	\bigr)
	\norm{g}_{ L^{  \zeta}_{\mathcal{P}}(L^2(0,T;H^k_x))} ,
\end{equation*}
for all $k \in \{0,1\}$, $T \in (0,\infty)$, $\zeta\in [2,\infty)$,  $g \in L^{\zeta}_{\mathcal{P}}(L^2(0,T;H^k_x))$ and noises $W$ with expansion \eqref{Eq15} for $(\phi_k)_{k\in \N}\in L^\infty(\R^d;\ell^2)$ satisfying additionally $(\nabla \phi_k)_{k\in \N}\in L^{\nu}(\R^d;\ell^2(\C^d))$ if $k=1$.
\end{corollary}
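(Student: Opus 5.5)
The plan is to reduce Corollary \ref{lemma:stochStrich} to Theorem \ref{thm:stoch_Strichartz}. By Remark \ref{rem:stoch_int} the stochastic convolution against $W$ is the convolution against $\beta$ with integrand $h(t') = (g(t')\phi_k)_{k\in\N}$. Hence it suffices to prove the pointwise-in-$(\omega,t')$ bound
\begin{equation*}
\norm{(g\phi_k)_{k\in \N}}_{\ell^2(H^k_x)} \,\lesssim\, \bigl(\norm{\phi}_{L^\infty(\R^d;\ell^2)} + \delta_{k=1}\norm{\nabla\phi}_{L^\nu(\R^d;\ell^2(\C^d))}\bigr)\norm{g}_{H^k_x}.
\end{equation*}
Squaring, integrating in $t'$ and taking the $L^{\zeta}_\Omega$-norm then gives the claim. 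Progressive measurability of $h$ follows from that of $g$, since multiplication by $\phi$ is a bounded linear map.

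For $k=0$ I use Fubini/Tonelli to interchange the sum and the integral:
\begin{equation*}
\sum_k \norm{g\phi_k}_{L^2_x}^2 = \int_{\R^d}|g|^2\sum_k|\phi_k|^2\,\dx \le \norm{\phi}_{L^\infty(\R^d;\ell^2)}^2\norm{g}_{L^2_x}^2.
\end{equation*}
For $k=1$ I additionally need $\nabla(g\phi_k) = \phi_k\nabla g + g\nabla\phi_k$. The first term is handled exactly as above. For the second term Hölder's inequality gives
\begin{equation*}
\sum_k\norm{g\nabla\phi_k}_{L^2_x}^2 = \int_{\R^d} |g|^2 \sum_k |\nabla \phi_k|^2\,\dx \le \norm{g}_{L^{s}_x}^2\norm{\nabla\phi}_{L^\nu(\R^d;\ell^2(\C^d))}^2, \qquad \tfrac1s = \tfrac12 - \tfrac1\nu.
\end{equation*}
It then remains to check the Sobolev embedding $H^1_x\hookrightarrow L^s_x$, and this is exactly where the constraint \eqref{eqn:nu_exp} enters. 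This is the only delicate step. For $d\ge3$ we need $s\in[2,\frac{2d}{d-2}]$, which is equivalent to $\nu\ge d$. For $d=2$ we need $s<\infty$, which is equivalent to $\nu>2$. For $d=1$ every $s\in[2,\infty]$ is admissible, which is why only $\nu\ge2$ is required there. The case $\nu=\infty$, i.e.\ $s=2$, is trivial.

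Finally, I need to justify that the resulting $h$ lies in $L^\zeta_{\mathcal P}(L^2(0,T;\ell^2(H^1_x)))$ with the product rule valid in the weak sense. To do this I would approximate $g$ by Schwartz functions and the sequence $\phi$ by finite truncations $(\phi_k)_{k\le K}$, and then pass to the limit using the bounds above. Combining the estimates with Theorem \ref{thm:stoch_Strichartz} yields the corollary, with $\gamma$ enlarged by the embedding constant.
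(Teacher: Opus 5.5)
Your proposal is correct and follows essentially the same route as the paper. You rewrite the integral via Remark \ref{rem:stoch_int} with $h_k = g\phi_k$, bound $\|h\|_{\ell^2(H^k_x)}$ using Fubini, the product rule, H\"older's inequality with $\frac12 = \frac1s + \frac1\nu$ and the Sobolev embedding $H^1_x \hookrightarrow L^s_x$ (which is exactly where \eqref{eqn:nu_exp} enters), and then apply Theorem \ref{thm:stoch_Strichartz}; your approximation argument justifying the weak product rule is a detail the paper leaves implicit.
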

\begin{proof}
Following Remark \ref{rem:stoch_int} we have 
\begin{align*}
	\int_0^t S(t-t')g(t')\, \d W(t') \,=\, \int_0^t 
	S(t-t') h(t')
	\,\d\beta ,
\end{align*}
where $h_k(t) = g(t)\phi_k$. Regarding the new integrand $h$, we observe that
\begin{align*}
	\|h\|_{\ell^2(L^2_x)} \,=\, 
	\|h\|_{L^2(\R^d;\ell^2)} \,\le\, &
	\|g \|_{L^2_x}\| \phi \|_{L^\infty(\R^d; \ell^2)} ,\\
	\|\partial_{x_l} h\|_{\ell^2(L^2_x)} \,\le\, 
	\|(\partial_{x_l}g) \phi \|_{L^2(\R^d;\ell^2)} \,+\, 
	\| g (\partial_{x_l} \phi) \|_{L^2(\R^d;\ell^2)}\,\le \, &
	\|\partial_{x_l}g\|_{L^2_x} \| \phi \|_{L^\infty(\R^d;\ell^2)} \,+\, 
	\|g\|_{L^{\mu}_x} \| \partial_{x_l}\phi \|_{L^\nu(\R^d;\ell^2)} ,
\end{align*}
$\mathbb{P}\otimes \d t$-almost everywhere
due to H\"older's inequality with $1/2 = 1/\mu + 1/\nu$, for $l=1,\dots , d$. For $d\ge 3$, we have the Sobolev embedding $H^1_x\hookrightarrow L^2_x\cap L^{2d/(d-2)}_x \hookrightarrow L^\mu_x$, where the latter embedding follows since $\nu\ge d$. For $d\le 2$ we have $H^1_x \hookrightarrow L^\mu_x$ as well, using non-critical Sobolev embeddings and the assumption \eqref{eqn:nu_exp}.
All in all, we obtain
the claim from Theorem \ref{thm:stoch_Strichartz} after enlarging $\gamma$.
\end{proof}
Similarly, one can obtain Strichartz estimates for deterministic convolutions with the Schr\"odinger semigroup when composed with a multiplication operator.
\begin{corollary}\label{cor:mult_strichartz}Let $(r,q)$ and $(\tilde{r},\tilde{q})$ be admissible pairs and $\nu$ as in \eqref{eqn:nu_exp}. Then there exists a constant $\gamma<\infty$, such that 
	\begin{equation*}
		\norm[\Big]{\int_0^{\cdot} S({\cdot}-t')\bigl(\Psi f(t')\bigr)\d t'}_{L^r(0,T;W^{k,q}_x)} \leq \gamma \bigl( \norm{\Psi}_{L^{\infty}_x} +\delta_{k=1} \|\nabla \Psi\|_{ L^{\nu}(\R^d;\C^d)}
		\bigr) \norm{f}_{L^{\tilde{r}'}(0,T;W^{k,\tilde{q}'}_x)},
	\end{equation*}
	for all $k \in \{0,1\}$, $T \in (0,\infty)$,  $f \in L^{\tilde{r}'}(0,T;W^{k,\tilde{q}'}_x)$ and functions  $\Psi \in L_x^\infty$ satisfying additionally $\nabla \Psi\in L^{\nu}(\R^d;\C^d)$ if $k=1$.
\end{corollary}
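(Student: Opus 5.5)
We reduce the statement to the inhomogeneous Strichartz estimate of Theorem \ref{thm:strichartz}, applied with $h=\Psi f$. The only thing to check is that multiplication by $\Psi$ maps $W^{k,\tilde{q}'}_x$ into itself with norm controlled by $\norm{\Psi}_{L^\infty_x}+\delta_{k=1}\norm{\nabla\Psi}_{L^\nu(\R^d;\C^d)}$, pointwise in time. That is, we aim for
\[
\norm{\Psi f(t')}_{W^{k,\tilde{q}'}_x}\,\lesssim\, \bigl(\norm{\Psi}_{L^\infty_x}+\delta_{k=1}\norm{\nabla\Psi}_{L^\nu(\R^d;\C^d)}\bigr)\norm{f(t')}_{W^{k,\tilde{q}'}_x}.
\]
Taking the $L^{\tilde{r}'}(0,T)$-norm of this bound and inserting it into Theorem \ref{thm:strichartz} then gives the corollary after enlarging $\gamma$.

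For $k=0$ the bound is just $\norm{\Psi f}_{L^{\tilde{q}'}_x}\le\norm{\Psi}_{L^\infty_x}\norm{f}_{L^{\tilde{q}'}_x}$. For $k=1$ we use the product rule $\partial_{x_l}(\Psi f)=\Psi\,\partial_{x_l}f+(\partial_{x_l}\Psi)f$, first for smooth $f$ and then by density. The first term is bounded as in the case $k=0$. For the second term we apply H\"older's inequality with $1/\tilde{q}'=1/\nu+1/m$, which gives
\[
\norm{(\partial_{x_l}\Psi) f}_{L^{\tilde{q}'}_x}\,\le\,\norm{\nabla\Psi}_{L^\nu}\norm{f}_{L^m_x},
\]
and it remains to show $W^{1,\tilde{q}'}_x\hookrightarrow L^m_x$.

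This embedding is the main point. Since $\tilde{q}\in[2,2d/(d-2)_+]$, we have $\tilde{q}'\in[2d/(d+2),2]$ for $d\ge3$. The Sobolev embedding gives $W^{1,\tilde{q}'}_x\hookrightarrow L^s_x$ for every $s\in[\tilde{q}',s^*]$, where $1/s^*=1/\tilde{q}'-1/d$. For this range we also need $\tilde{q}'<d$, which holds when $d\ge3$ because $\tilde{q}'\le2<3$.

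We must therefore check that $1/m=1/\tilde{q}'-1/\nu$ lies in $[1/\tilde{q}'-1/d,\,1/\tilde{q}']$. The lower endpoint is equivalent to $\nu\ge d$ and the upper one to $\nu\ge 1$. Both follow from \eqref{eqn:nu_exp}, where $\nu\ge\max\{2,d\}$.

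For $d\le2$ we have $\tilde{q}'<2\le d$ except in a few endpoint cases. For $d=1$ the embedding $W^{1,\tilde{q}'}\hookrightarrow L^\infty$ covers every $m\ge\tilde{q}'$. For $d=2$ the pair $(\tilde{r},\tilde{q})$ is admissible with $\tilde{r}>2$, so $\tilde{q}<\infty$ and $\tilde{q}'>1$; hence $W^{1,\tilde{q}'}$ embeds into $L^s$ for all $s\in[\tilde{q}',\,2\tilde{q}'/(2-\tilde{q}')]$, with every $s<\infty$ allowed if $\tilde{q}'=2$. The condition on $m$ is then $1/\nu\le 1/2$, and also $m<\infty$ when $\tilde{q}'=2$; this is exactly why the strict inequality $\nu>2$ is imposed for $d=2$.

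With the embedding in hand, combining the three estimates proves the multiplier bound and hence the corollary.
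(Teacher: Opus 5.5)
Your proposal is correct and follows the paper's proof essentially step for step. Like the paper, you apply Theorem~\ref{thm:strichartz} to $h=\Psi f$, bound $\partial_{x_l}(\Psi f)$ via the product rule and H\"older's inequality with $1/\tilde{q}'=1/\mu+1/\nu$, and close with the embedding $W^{1,\tilde{q}'}_x\hookrightarrow L^\mu_x$, checked separately for $d\ge3$ (using $\nu\ge d$), $d=2$ (where $\nu>2$ is needed exactly when $\tilde{q}=2$) and $d=1$. Your density justification of the product rule only makes explicit what the paper leaves implicit. The remaining slips are cosmetic: the upper-endpoint condition $1/m\le1/\tilde{q}'$ holds automatically rather than being equivalent to $\nu\ge1$, and $\tilde{q}'<2\le d$ is false for $d=1$, a case you handle separately anyway.
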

\begin{proof}We define $h(t) = \Psi f(t)$ and obtain 
	\begin{align*}
		\|h\|_{L_x^{\tilde{q}'}}
		 \,\le&{\, }
		\|f \|_{L^{\tilde{q}'}_x}\| \Psi \|_{L_x^\infty} ,\\
		\|\partial_{x_l} h\|_{L_x^{\tilde{q}'}} \,\le\, 
		\|(\partial_{x_l}f) \Psi \|_{L_x^{\tilde{q}'}} \,+\, 
		\|f (\partial_{x_l} \Psi) \|_{L_x^{\tilde{q}'}}\,\le &{\, }
		\|\partial_{x_l}f\|_{L^{\tilde{q}'}_x} \| \Psi \|_{L^\infty_x} \,+\, 
		\|f\|_{L^{\mu}_x} \| \partial_{x_l}\Psi \|_{L_x^\nu} ,
	\end{align*}
where here $1/\tilde{q}' = 1/\mu +1/\nu$. For $d\ge 3$, since $\tilde{q}\ge 2$ by admissibility, we have that $W^{1,\tilde{q}'}_x\hookrightarrow L^{\tilde{q}'}_x\cap L^{d\tilde{q}'/(d-\tilde{q}')}_x \hookrightarrow L^\mu_x$ and the latter is a consequence of $\nu\ge d$. For $d=2$ and $\tilde{q}>2$ the same applies, while for $d=2$ and $\tilde{q}=2$ or $d=1$ we use non-critical Sobolev embeddings based on \eqref{eqn:nu_exp} to see that  $W^{1,\tilde{q}'}_x \hookrightarrow L^\mu_x$. The claim follows by Theorem \ref{thm:strichartz} up to enlarging $\gamma$. 
\end{proof}
\subsection{ Estimates on the nonlinearities}\label{Sec:nonlinearities}
In this subsection we consider nonlinearities ${F}^{(n)}$ and $G$ as in 
Assumption \ref{ass:nonlinear}~\ref{ass:nonlinear_II}--\ref{ass:nonlinear_G} and take $p\in (1,1+\frac{4}{(d-2)_+})$ such that the latter  holds. 
As a consequence, there exist functions ${F_1^{(n)}, F_2^{(n)},G_1,G_2 \in C^1(\R^2;\C)}$ with $F_1^{(n)}(0) = F_2^{(n)}(0) = G_1(0) = G_2(0)=0$ such that
\begin{align}\label{Eq29}
    F^{(n)} = F_1^{(n)} + F_2^{(n)}, \qquad G = G_1 + G_2,
\end{align}
and\noeqref{eq:F1pwlip,eq:G1pwlip,eq:F2pwlip,eq:G2pwlip}
\begin{subequations}
\label{eq:FGpwlip}
\begin{align}
    \label{eq:F1pwlip}
    \lvert F_1^{(n)}(x) - F_1^{(n)}(y) \rvert &\leq C \lvert x - y \rvert, \\
    \label{eq:G1pwlip}
    \lvert G_1(x) - G_1(y) \rvert &\leq C \lvert x - y \rvert, \\
    \label{eq:F2pwlip}
    \lvert F_2^{(n)}(x) - F_2^{(n)}(y) \rvert &\leq C (\lvert x \rvert^{p-1} + \lvert y \rvert^{p-1}) \lvert x - y \rvert, \\
    \label{eq:G2pwlip}
    \lvert G_2(x) - G_2(y) \rvert &\leq C (\lvert x \rvert^{\frac{p-1}{2}} + \lvert y \rvert^{\frac{p-1}{2}})\lvert x - y \rvert,
\end{align}
\end{subequations}
for some new constant $C<\infty$, all $x,y\in \R^2$ and $n\in \{1,\dots ,N\}$. Indeed, this can be achieved  by setting $F_1^{(n)}(x) = \eta(|x|) F^{(n)}(x)$ and $G_1(x) = \eta(|x|) G(x)$, where $\eta \colon [0,\infty) \to \R$ is smooth, decreasing and satisfies $\eta(s) = 1$ for $s\le 1$ and $\eta(s) = 0$ for $s\ge 2$. We derive estimates for $(F_1^{(1)},\dots, F^{(N)}_1,G_1)$ and $(F_2^{(1)},\dots, F^{(N)}_2,G_2)$ separately.

\begin{lemma}[Lipschitz estimates on $(F_1^{(n)},G_1)$]
\label{lem:F1G1est} There exists  $ M<\infty $ such that the estimates 
\begin{subequations}
\label{eq:F1G1est}
\begin{align}\label{eq:bddness_F1}
    \lVert F_1^{(n)}(u) \rVert_{H^1_x} &\leq M \lVert u \rVert_{H^1_x}, \\
    \label{eq:bddness_G1}
    \lVert G_1(u) \rVert_{H^1_x} &\leq M \lVert u \rVert_{H^1_x}, \\\label{eq:Lipschtiz_est_F1}
    \lVert F_1^{(n)}(u) - F_1^{(n)}(v) \rVert_{L^2_x} &\leq M \lVert u - v \rVert_{L^2_x}, \\
    \label{eq:Lipschtiz_est_G1}
    \lVert G_1(u) - G_1(v) \rVert_{L^2_x} &\leq M \lVert u - v \rVert_{L^2_x},
\end{align}
\end{subequations}
hold for all $u, v \in H^1_x$ and $n\in \{1,\dots, N\}$.
\end{lemma}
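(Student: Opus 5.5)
The plan is to deduce every estimate from the fact that $F_1^{(n)} = \eta(|\cdot|)F^{(n)}$ and $G_1 = \eta(|\cdot|)G$ are globally Lipschitz maps $\R^2\to\C$ vanishing at $0$, combined with a chain rule for Lipschitz (here $C^1$) functions composed with Sobolev functions.

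\textbf{Step 1: the $L^2_x$ Lipschitz bounds.} Estimates \eqref{eq:Lipschtiz_est_F1} and \eqref{eq:Lipschtiz_est_G1} follow by applying \eqref{eq:F1pwlip} and \eqref{eq:G1pwlip} pointwise in $x$, then squaring and integrating. Taking $v=0$ and using $F_1^{(n)}(0)=G_1(0)=0$ also gives the $L^2_x$ part of \eqref{eq:bddness_F1} and \eqref{eq:bddness_G1}.

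\textbf{Step 2: the gradient bound.} Here we use the chain rule. Since $F_1^{(n)}\in C^1(\R^2;\C)$ has compact support of its derivative, its real derivative $(F_1^{(n)})'$ is bounded and continuous, with $\sup|(F_1^{(n)})'|\le C$. For $u\in H^1_x$, viewed as a map into $\R^2$, we claim
\[
F_1^{(n)}(u)\in H^1_x \quad\text{with}\quad \partial_{x_l}F_1^{(n)}(u) = (F_1^{(n)})'(u)\,\partial_{x_l}u,
\]
where $(F_1^{(n)})'(u)$ acts as a real-linear map on $\partial_{x_l}u \in \R^2$. The argument runs as follows.
\begin{enumerate}
\item Approximate $u$ by $u_j\in \mathscr{S}(\R^d)$ in $H^1_x$, passing to a subsequence that also converges almost everywhere.
\item For smooth $u_j$ the classical chain rule applies.
\item Then $F_1^{(n)}(u_j)\to F_1^{(n)}(u)$ in $L^2_x$ by Step 1.
\item We have $(F_1^{(n)})'(u_j)\partial_l u_j \to (F_1^{(n)})'(u)\partial_l u$ in $L^2_x$. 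To see this, split the difference as $(F_1^{(n)})'(u_j)(\partial_l u_j-\partial_l u)$, which is controlled by the uniform bound on $(F_1^{(n)})'$, plus $\bigl((F_1^{(n)})'(u_j)-(F_1^{(n)})'(u)\bigr)\partial_l u$, which tends to zero by continuity, almost everywhere convergence and dominated convergence with majorant $2C|\partial_l u|$.
\item Closedness of the weak derivative then identifies the limit.
\end{enumerate}
This yields $\|\nabla F_1^{(n)}(u)\|_{L^2_x}\le C\|\nabla u\|_{L^2_x}$. The same argument applies to $G_1$.

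\textbf{Step 3: conclusion.} Combining Steps 1 and 2 gives \eqref{eq:bddness_F1} and \eqref{eq:bddness_G1} with $M$ a multiple of $C$, uniform in $n$ since $N$ is finite.

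The only step needing care is Step 2, specifically the passage to the limit in the chain rule. Continuity of $(F_1^{(n)})'$, which holds because $F^{(n)}\in C^1$, makes the dominated convergence argument straightforward. Bounded continuity of the derivative on $\R^2$ follows from $\eta$ being smooth with $\eta(s)=0$ for $s\ge2$, so that $(F_1^{(n)})'$ is a continuous function with compact support.
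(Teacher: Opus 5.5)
Your proposal is correct and follows the paper's proof. The $L^2_x$ bounds come pointwise from \eqref{eq:F1pwlip}--\eqref{eq:G1pwlip} together with $F_1^{(n)}(0)=G_1(0)=0$, and the gradient bound comes from the chain rule $\partial_{x_l}F_1^{(n)}(u)=(F_1^{(n)})'(u)\,\partial_{x_l}u$; the only difference is that the paper cites \cite[Lemma 7.5]{gilbarg_trudinger} for this chain rule, whereas you reprove it in the $\R^2$-valued setting via smooth approximation and dominated convergence, which is the standard proof of that lemma.
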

\begin{proof}
It suffices to prove the assertions regarding $F_1^{(1)}$, for which we write $F_1=F_1^{(1)}$ in this proof.  By $F_1(0) = 0$ and \eqref{eq:F1pwlip} we have
\begin{align*}
\|F_1(u)\|_{L^2_x} \,=\, 
\|F_1(u) -F_1(0)\|_{L^2_x} \,\le\,C \|u \|_{L^2_x}.
\end{align*}
Moreover, as $\partial_{x_l} (F_1(u)) = F_1'(u) \partial_{x_l}u$ by \cite[Lemma 7.5]{gilbarg_trudinger}, we obtain also that $\| \partial_{x_l} F_1(u) \|_{L^2_x(\R^d)} \le C \|\partial_{x_l} u \|_{L_x^2}$ for $l=1,\dots, d$ completing the proof of \eqref{eq:bddness_F1}. The claim \eqref{eq:Lipschtiz_est_F1} follows immediately from \eqref{eq:F1pwlip}.
\end{proof}
\begin{lemma}[Local Lipschitz estimates on $(F_2^{(n)},G_2)$]\label{lemma:bounds_FG}
There exists  $ M<\infty $ such that the estimates
\label{lem:F2est}
\begin{subequations}
\label{eq:F2G2est}
\begin{align} \label{eq:F2_bound}
    \lVert F_2^{(n)}(u) \rVert_{W^{1,1+1/p}_x} &\leq M \lVert u \rVert_{L^{p+1}_x}^{p-1} \lVert u \rVert_{W^{1,p+1}_x}, \\ \label{eq:G2_bound}
    \lVert G_2(u) \rVert_{H^1_x} &\leq M \lVert u \rVert_{L^{p+1}_x}^{{(p-1)}/{2}} \lVert u \rVert_{W^{1,p+1}_x}, \\ \label{eq:F2_lip}
    \lVert F_2^{(n)}(u) - F_2^{(n)}(v) \rVert_{L^{1+{1}/{p}}_x} &\leq M \bigl(\lVert u \rVert_{L^{p+1}_x}^{p-1} + \lVert v \rVert_{L^{p+1}_x}^{p-1}\bigr) \lVert u - v \rVert_{L^{p+1}_x}, \\
    \label{eq:G2_lip}
    \lVert G_2(u) - G_2(v) \rVert_{L^2_x} &\leq M \bigl(\lVert u \rVert_{L^{p+1}_x}^{{(p-1)}/{2}} + \lVert v \rVert_{L^{p+1}_x}^{{(p-1)}/{2}}\bigr) \lVert u - v \rVert_{L^{p+1}_x},
\end{align}
\end{subequations}
hold for all $u, v \in W^{1,p+1}_x$  and $n\in\{1,\dots, N\}$.
\end{lemma}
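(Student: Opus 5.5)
It suffices, as in the proof of Lemma~\ref{lem:F1G1est}, to treat $F_2 = F_2^{(1)}$ and $G_2$. The approach is entirely pointwise: first derive pointwise bounds on $F_2$, $G_2$ and their derivatives from \eqref{eq:F2pwlip}--\eqref{eq:G2pwlip}, then integrate them using H\"older's inequality with exponents adapted to $L^{p+1}_x$.

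\emph{Pointwise bounds.} Setting $y=0$ in \eqref{eq:F2pwlip} and \eqref{eq:G2pwlip} and using $F_2(0)=G_2(0)=0$ gives
\[
|F_2(x)| \le C|x|^p, \qquad |G_2(x)| \le C|x|^{(p+1)/2}.
\]
Since $F_2, G_2 \in C^1$, letting $y \to x$ in the same estimates gives
\[
|F_2'(x)| \le 2C|x|^{p-1}, \qquad |G_2'(x)| \le 2C|x|^{(p-1)/2}.
\]
For $u \in W^{1,p+1}_x$ the chain rule $\partial_{x_l} F_2(u) = F_2'(u)\,\partial_{x_l} u$ holds, by \cite[Lemma 7.5]{gilbarg_trudinger}. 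That lemma is stated for globally Lipschitz functions, so one approximates: apply it to $F_2\,\eta(|\cdot|/R)$ and pass to the limit $R \to \infty$ using dominated convergence, justified by the growth bounds just derived. The same applies to $G_2$. I expect this justification of the chain rule for non-Lipschitz $C^1$ functions to be the only delicate point, and it is a routine one.

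\emph{Integrated estimates.} Set $s = 1+1/p = (p+1)/p$. H\"older's inequality with
\[
\frac{1}{s} \,=\, \frac{p-1}{p+1} + \frac{1}{p+1}
\]
gives
\[
\|\,|u|^{p-1}\partial_l u\|_{L^s_x} \le \|u\|_{L^{p+1}_x}^{p-1}\|\partial_l u\|_{L^{p+1}_x}.
\]
Similarly,
\[
\|\,|u|^{p}\|_{L^s_x} = \|u\|_{L^{p+1}_x}^p \le \|u\|_{L^{p+1}_x}^{p-1}\|u\|_{W^{1,p+1}_x}.
\]
Together these give \eqref{eq:F2_bound}. For $G_2$, H\"older's inequality with
\[
\frac12 \,=\, \frac{p-1}{2(p+1)} + \frac{1}{p+1}
\]
gives
\[
\|\,|u|^{(p-1)/2}\partial_l u\|_{L^2_x} \le \|u\|_{L^{p+1}_x}^{(p-1)/2}\|\partial_l u\|_{L^{p+1}_x},
\]
and $\|G_2(u)\|_{L^2_x} \le C\|u\|_{L^{p+1}_x}^{(p+1)/2}$ is handled in the same way. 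This yields \eqref{eq:G2_bound}.

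\emph{Lipschitz estimates.} Apply \eqref{eq:F2pwlip} pointwise and then the same H\"older splitting, together with
\[
\|\,|u|^{p-1}\|_{L^{(p+1)/(p-1)}_x} = \|u\|_{L^{p+1}_x}^{p-1}.
\]
This gives \eqref{eq:F2_lip}. The estimate \eqref{eq:G2_lip} follows analogously from \eqref{eq:G2pwlip}, using $\|\,|u|^{(p-1)/2}\|_{L^{2(p+1)/(p-1)}_x} = \|u\|_{L^{p+1}_x}^{(p-1)/2}$. Since $p>1$, all these exponents are finite, and the constant $M$ depends only on $C$ and $p$.
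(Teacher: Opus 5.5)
Your proof is correct, but the derivative estimate follows a different route from the paper's. For the Lipschitz estimates \eqref{eq:F2_lip}, \eqref{eq:G2_lip} and the zeroth-order parts of \eqref{eq:F2_bound}, \eqref{eq:G2_bound}, your argument coincides with the paper's: the pointwise bound \eqref{eq:F2pwlip}/\eqref{eq:G2pwlip} followed by H\"older's inequality with the same exponents, where the paper simply sets $v=0$.

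\textbf{The paper's route.} The paper deliberately avoids the chain rule for the non-Lipschitz $F_2$. It estimates the difference quotients $D^h_l F_2(u)$ directly from \eqref{eq:F2pwlip}. This uses the translation invariance of $\|u(\cdot+he_l)\|_{L^{p+1}_x}$ and the bound $\|D^h_l u\|_{L^{p+1}_x}\le\|\partial_{x_l}u\|_{L^{p+1}_x}$. It then passes to the weak derivative through the weak-compactness criterion \cite[Lemma 7.24]{gilbarg_trudinger}, which requires the integrability exponent $1+1/p$ to exceed $1$.

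\textbf{Your route.} You extract $|F_2'(x)|\le 2C|x|^{p-1}$ from \eqref{eq:F2pwlip}. You then prove the chain rule $\partial_{x_l}F_2(u)=F_2'(u)\,\partial_{x_l}u$ by truncating to $F_2\,\eta(|\cdot|/R)$, which is $C^1$ with bounded derivative because $\eta$ is constant near $0$, and letting $R\to\infty$ by dominated convergence. This works: the extra term $F_2(u)\,\eta'(|u|/R)\,R^{-1}\,\partial_{x_l}u$ is dominated by $2C\|\eta'\|_{\infty}|u|^{p-1}|\partial_{x_l}u|\in L^{1+1/p}_x$ and vanishes pointwise once $R>|u(x)|$. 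Closedness of weak derivatives under $L^{1+1/p}_x$-convergence then identifies the derivative.

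\textbf{Comparison.}
\begin{itemize}
\item The paper's argument uses only the local Lipschitz bound. It would therefore apply to merely locally Lipschitz nonlinearities satisfying \eqref{eq:F2pwlip}, but it does not identify the weak derivative.
\item Your argument uses the $C^1$ structure of $F_2$. In return it gives the explicit formula for the weak derivative, which is convenient for It\^o-formula computations, and it does not rely on reflexivity.
\end{itemize}
A minor point: $M$ depends on $d$ as well, through the sum over the $d$ partial derivatives, not only on $C$ and $p$.
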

\begin{proof}
We start by proving the assertions concerning $F_2^{(n)}$ for which it suffices again to consider only $F_2\coloneq F_2^{(1)}$. Firstly, we observe that 
\begin{align}\begin{split}\label{Eq21}&
    \norm{F_2(u) -F_2(v)}_{L_x^{1+{1}/{p}}} \,{\le}\,C\norm[big]{(|u|^{p-1} +|v|^{p-1})|u-v| }_{L_x^{1+{1}/{p}}}
   \,\le \, C \bigl(\|u\|_{L_x^{p+1}}^{p-1} +  \|v\|_{L_x^{p+1}}^{p-1}\bigr)\|u-v\|_{L_x^{p+1}},
    \end{split}
\end{align}
where we used \eqref{eq:F2pwlip} and H\"older's inequality with exponents
\begin{equation*}
    \frac{p}{p+1}\,=\, \frac{p-1}{p+1}\,+\, \frac{1}{p+1}.
\end{equation*}
This finishes the proof of \eqref{eq:F2_lip} and by setting $v=0$, we also recover the bound
\begin{equation}\label{Eq19}
\|F_2(u)\|_{L_x^{1+{1}/{p}}} \,\le\, 
C\|u\|_{L_x^{p+1}}^{p-1} \|u\|_{L_x^{p+1}}.
\end{equation}
To complete also the proof of \eqref{eq:F2_bound}, since unlike in Lemma \ref{lem:F1G1est} we compose with non-Lipschitz functions, we introduce the 
difference quotients
\begin{equation*}
    D^h_l u(x) \,=\, \frac{u(x+h e_l) - u(x)}{h}
    ,\qquad x\in \R^d,
\end{equation*}
which obey the bound
\[
\|D^h_l u\|_{L_x^{p+1}} \,\le\, \|\partial_{x_l} u \|_{L_x^{p+1}},
\]
by the fundamental theorem of calculus and Minkowski's inequality. Hence, we can estimate
\begin{align}&\label{Eq23}
    \|D^h_l F_2(u) \|_{L_x^{1+{1}/{p}}}  \, 
    {\le}\,C \norm[big]{\bigl( |u(\cdot + he_l )|^{p-1} + |u|^{p-1} \bigr) D^h_l u}_{L_x^{1 + 1/p}}
    \,\le\, 2C \|u\|_{L^{p+1}_x}^{p-1}\|\partial_{x_l} u \|_{L_x^{p+1}},
\end{align}
by another application of \eqref{eq:F2pwlip} and  H\"older's inequality. A weak convergence argument yields that the same bound holds for $\partial_{x_l} F_2(u)$, see \cite[Lemma 7.24]{gilbarg_trudinger}, resulting together with \eqref{Eq19} in \eqref{eq:F2_bound}.

The estimates \eqref{eq:G2_lip} and \eqref{eq:G2_bound} follow similarly to \eqref{Eq21} and \eqref{Eq23} by employing \eqref{eq:G2pwlip} and 
H\"older's inequality with 
\[
\frac{1}{2} \,=\, \frac{p-1}{2(p+1)}\,+\, \frac{1}{p+1}. \qedhere
\]
\end{proof}

\subsection{Estimates on the truncation mapping}\label{Sec:truncation}To turn the estimates from the previous subsection into global Lipschitz estimates, we introduce suitable truncation maps by defining functions 
\begin{equation*}
\begin{aligned}
    \theta_R\colon [0,\infty) \to [0,1], \, s \mapsto  \begin{cases}
        1, \quad &s \leq R, \\
        {R}/{s}, \quad &s> R,
    \end{cases}
\end{aligned}
\end{equation*}
for $R \in (0,\infty)$. 
This allows us to define the mappings 
\begin{equation*}
\begin{aligned}
    \Phi_R \colon L_x^{p+1} &\to L_x^{p+1} ,\,
    u \mapsto \theta_R(\norm{u}_{L^{p+1}_x})u,
\end{aligned}
\end{equation*}
which satisfy the following properties.

\begin{lemma}[Lipschitz truncation]
    \label{lem:truncation}
    Let $R \in (0,\infty)$, then the estimates 
    \begin{subequations}
    \label{eq:PhiRest}
    \begin{align}
        \label{eq:PhiRtrunc}
        \norm{ \Phi_R(u) }_{L^{p+1}_x} &\leq R, \\
        \label{eq:PhiRlip}
        \norm{ \Phi_R(u) - \Phi_R(v) }_{L^{p+1}_x} &\leq 2\norm{ u - v }_{L^{p+1}_x},
    \end{align}
hold
    for all  $u,v \in L^{p+1}_x$.
    \end{subequations}
\end{lemma}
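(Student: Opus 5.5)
The bound \eqref{eq:PhiRtrunc} is immediate from the definition of $\theta_R$. If $\norm{u}_{L^{p+1}_x}\le R$, then $\Phi_R(u)=u$ and its norm is at most $R$. Otherwise $\norm{\Phi_R(u)}_{L^{p+1}_x}=(R/\norm{u}_{L^{p+1}_x})\norm{u}_{L^{p+1}_x}=R$. In words, $\Phi_R$ is the radial retraction of $L^{p+1}_x$ onto the closed ball of radius $R$.

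For the Lipschitz bound \eqref{eq:PhiRlip} I will use the standard fact that the radial retraction onto a ball in any normed space is $2$-Lipschitz. Write $a=\norm{u}_{L^{p+1}_x}$ and $b=\norm{v}_{L^{p+1}_x}$, and by symmetry assume $a\ge b$. There are three cases.

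\emph{Case $a\le R$.} Here $\Phi_R(u)=u$ and $\Phi_R(v)=v$, so the constant is $1$.

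\emph{Case $a>R\ge b$.} I split
\[
\Phi_R(u)-\Phi_R(v)=\bigl(\tfrac{R}{a}u-u\bigr)+(u-v).
\]
The first term has norm $a-R\le a-b\le\norm{u-v}_{L^{p+1}_x}$, since $R\ge b$. This gives the constant $2$.

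\emph{Case $a\ge b>R$.} I write
\[
\tfrac{R}{a}u-\tfrac{R}{b}v=\tfrac{R}{a}(u-v)+R\,v\bigl(\tfrac1a-\tfrac1b\bigr).
\]
The first term has norm at most $\norm{u-v}_{L^{p+1}_x}$, since $R/a<1$. The second has norm $R\,b\,\frac{a-b}{ab}=\frac{R}{a}(a-b)\le\norm{u-v}_{L^{p+1}_x}$ by the reverse triangle inequality. Again the constant is $2$.

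No real obstacle is expected. The only point needing care is the splitting in the mixed case, where the triangle inequality in $L^{p+1}_x$ and the reverse triangle inequality $|a-b|\le\norm{u-v}_{L^{p+1}_x}$ do all the work.
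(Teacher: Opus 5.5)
Your proof is correct and takes essentially the same approach as the paper. The paper likewise splits $\theta_R(\alpha)u-\theta_R(\beta)v$ into a scalar-difference term plus $\theta_R(\cdot)(u-v)$ and bounds the former by the reverse triangle inequality; it just avoids your three cases with a single computation based on the identity $\alpha\,\theta_R(\alpha)=\alpha\wedge R$ and the monotonicity of $\theta_R$.
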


\begin{proof}
    We write $\alpha = \norm{ u }_{L^{p+1}_x}$.
    By definition of $\Phi_R$, we get
    \begin{equation*}
        \norm{ \Phi_R(u) }_{L^{p+1}_x} \, = \, \norm{ \theta_R(\alpha) u}_{L^{p+1}_x} \, =\, \alpha \theta_R(\alpha) \, =\,  \alpha \wedge R \, \leq \, R,
    \end{equation*}
    which shows \eqref{eq:PhiRtrunc}.
    If we additionally write $\beta = \norm{ v }_{L^{p+1}_x}$, we can estimate
    \begin{align*}
        \norm{ \Phi_R(u) - \Phi_R(v) }_{L^{p+1}_x} 
        &\,=\, \norm{ \theta_R(\alpha)u - \theta_R(\beta)v }_{L^{p+1}_x} \\
        &\,\leq\, \norm{ \theta_R(\alpha)u - \theta_R(\beta)u }_{L^{p+1}_x} + \norm{ \theta_R(\beta)u - \theta_R(\beta)v}_{L^{p+1}_x} \\
        &\,\leq\,  \alpha (\theta_R(\alpha) - \theta_R(\beta)) + \norm{ u - v}_{L^{p+1}_x},
    \end{align*}
    where we assume without loss of generality that $\alpha\leq \beta$. Then we proceed to bound
    \[
    \alpha (\theta_R(\alpha) - \theta_R(\beta)) \,=\, \alpha \wedge R  - \beta\wedge R + (\beta-\alpha) \theta_R(\beta)\,\le\, \beta -\alpha,
    \]
    so that an application of the reverse
    triangle inequality results in \eqref{eq:PhiRlip}.
\end{proof}

\subsection{Global in time well-posedness for the truncated equation}\label{Sec:WP_cutoff}
In this section, we consider the truncated version
\begin{equation*}
\begin{cases}
    \d  u \,=\, \Bigl[i \Delta u + \sum_{n=1}^N \Psi_n \bigl(F_1^{(n)}(u) \,+\,  {F}_2^{(n)}(\Phi_R (u))\bigr)\Bigr] \,\d t\, +\, \bigl[ G_1(u) + G_2( \Phi_R(u))\bigr] \, \d W(t),\\
    u(0)= u_0,
	\end{cases}
\end{equation*}
of \eqref{eq:general_SNLS} based on the decompositions \eqref{eqn_f_tilda_equals} and  \eqref{Eq29}.
The central observation is that as a consequence of the results from Subsections \ref{Sec:nonlinearities}--\ref{Sec:truncation} the modified nonlinearities $F_1^{(n)} + F_2^{(n)} \circ\Phi_R$ and
$ G_1+G_2\circ \Phi_R$ are globally Lipschitz for fixed $R>0$. Then the deterministic and stochastic Strichartz estimates from Subsection \ref{Sec:Strichartz} allow to set up a fixed point argument in the following space. 
\begin{lemma}\label{lemma:metric_space}
Let  $L,T \in (0,\infty)$. Then the set 
\begin{align}\label{eqn_AA}
    \mathscr{M}_{L,T}\,=\, \Bigl\{& 
    u \in L^2_{\mathcal{P}}(C([0,T]; L^2_x) \cap L^r(0,T; L_x^{p+1}))\,\Big|
    \, 
    \mathbb E^{1/2}\Bigl[ \|u\|_{L^\infty(0,T;H^1_x)\cap L^r(0,T;W_x^{1,p+1})}^2\Bigr] \le L
    \Bigr\},
\end{align}
equipped with the metric
\begin{equation*}
    d_{\mathscr{M}_{L,T}}(u,v)
    \,\coloneq\, \|u-v\|_{L^2_{\mathcal{P}}({C([0,T]; L^2_x)} \cap L^r(0,T;L^{p+1}_x))}
\end{equation*}
is complete.
\end{lemma}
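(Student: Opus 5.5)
The plan is to present $\mathscr{M}_{L,T}$ as a closed subset of the Banach space
\[
E \,\coloneq\, L^2_{\mathcal{P}}\bigl(C([0,T];L^2_x)\cap L^r(0,T;L^{p+1}_x)\bigr),
\]
which is complete because it is a closed subspace of the Bochner space $L^2_\Omega(C([0,T];L^2_x)\cap L^r(0,T;L^{p+1}_x))$. Completeness of $\mathscr{M}_{L,T}$ then follows at once. So we take a sequence $(u_n)\subset\mathscr{M}_{L,T}$ with $u_n\to u$ in $E$ and show that
\[
\mathbb{E}^{1/2}\bigl[\|u\|^2_{L^\infty(0,T;H^1_x)\cap L^r(0,T;W^{1,p+1}_x)}\bigr]\le L.
\]
The key ingredient is lower semicontinuity of the strong norm with respect to the weak topology, which we plan to obtain by duality.

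\textbf{Step 1 (a.s. subsequence).} Passing to a subsequence, we may assume $u_n(\omega)\to u(\omega)$ in $C([0,T];L^2_x)\cap L^r(0,T;L^{p+1}_x)$ for $\P$-a.e.\ $\omega$.

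\textbf{Step 2 (pathwise lower semicontinuity).} Write
\[
N(v)\coloneq \|v\|_{L^\infty(0,T;H^1_x)}+\|v\|_{L^r(0,T;W^{1,p+1}_x)}\in[0,\infty],
\]
defined for $v\in C([0,T];L^2_x)\cap L^r(0,T;L^{p+1}_x)$ and set to $+\infty$ when $v$ lacks the regularity. We claim $N$ is lower semicontinuous for convergence in that space. Each summand is a supremum of continuous functionals:
\[
\|v\|_{L^r(0,T;W^{1,p+1}_x)}=\sup\Bigl\{\int_0^T\!\!\int v\,\overline{\psi}+\nabla v\cdot\overline{\Psi}\Bigr\},
\]
where the supremum runs over smooth compactly supported test functions $(\psi,\Psi)$ with unit norm in the dual $L^{r'}(0,T;L^{(p+1)'}_x)^{1+d}$. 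After moving derivatives onto the test function, each such pairing reads $\int\!\!\int v\,\overline{(\psi-\operatorname{div}\Psi)}$. This is continuous in $v$ with respect to $L^r(0,T;L^{p+1}_x)$. The same argument applies to $L^\infty(0,T;H^1_x)$ with dual exponents $(1,2)$ and continuity in $C([0,T];L^2_x)$.

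The main obstacle is ensuring that these suprema recover the full norm, including the value $+\infty$ when $v\notin W^{1,p+1}$. The representation holds because $L^{r}(0,T;L^{p+1}_x)$ is the dual of the separable reflexive space $L^{r'}(0,T;L^{(p+1)'}_x)$, since $r<\infty$. For the $L^\infty(0,T;L^2_x)$ component, which is the dual of $L^1(0,T;L^2_x)$, the representation is also valid, and finiteness of the supremum yields a weak derivative in the correct space by the Riesz representation. As an alternative, we can argue via Banach--Alaoglu: $N(u_n(\omega))$ bounded along a subsequence gives weak-$*$ convergence, which identifies the limit as $u(\omega)$, and weak-$*$ lower semicontinuity of the norm finishes the step.

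\textbf{Step 3 (Fatou).} Hence $N(u(\omega))\le\liminf_n N(u_n(\omega))$ almost surely. Fatou's lemma then gives
\[
\mathbb{E}[N(u)^2]\le\liminf_n\mathbb{E}[N(u_n)^2]\le L^2.
\]
Measurability of $N(u)$ follows from its being a countable supremum of measurable functionals, obtained by restricting to a countable dense family of test functions. Progressive measurability of $u$ is inherited from the closed subspace $E$. Therefore $u\in\mathscr{M}_{L,T}$, which proves that $\mathscr{M}_{L,T}$ is closed and hence complete.
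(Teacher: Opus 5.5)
Your proof is correct and follows the same route as the paper. You extract an almost surely convergent subsequence from the limit in $L^2_{\mathcal P}(C([0,T];L^2_x)\cap L^r(0,T;L^{p+1}_x))$, use pathwise lower semicontinuity of the $L^\infty(0,T;H^1_x)\cap L^r(0,T;W^{1,p+1}_x)$-norm under this convergence, and conclude with Fatou's lemma; your duality argument and the countable-supremum remark on measurability just spell out the lower semicontinuity step that the paper only asserts.
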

\begin{proof}
    Let $(u_k)_{k\in \N}$ be Cauchy in $\mathscr{M}_{L,T}$. Then 
    \begin{equation}\label{eqn_conv_in_lemma}
        u_k \,\to\, u , \qquad \text{in } L^2_{\mathcal{P}}({C([0,T]; L^2_x)}\cap L^r(0,T;L^{p+1}_x)),
    \end{equation}
    by completeness of the latter space. 
    After passing to a subsequence along which the convergence \eqref{eqn_conv_in_lemma} holds additionally $\mathbb P$-a.s., we deduce that also
    \[
     \|u\|_{L^\infty(0,T;H^1_x)\cap L^r(0,T;W_x^{1,p+1})}\,
    \le\,
     \liminf_{k\to\infty} \| u_k \|_{L^\infty(0,T;H^1_x)\cap L^r(0,T;W_x^{1,p+1})}
    ,
    \]
    $\P$-a.s., by weak lower-semicontinuity of this norm with respect to ${C([0,T]; L^2_x)}$-convergence. Then, an application of Fatou's lemma yields that 
    \begin{align}
        \mathbb E^{1/2} \Bigl[ \|u\|_{L^\infty(0,T;H^1_x)\cap L^r(0,T;W_x^{1,p+1})}^2 \Bigr] \,\le\, \liminf_{k\to\infty} 
         \mathbb E^{1/2} \Bigl[ \|u_k\|_{L^\infty(0,T;H^1_x)\cap L^r(0,T;W_x^{1,p+1})}^2 \Bigr] \,\le\, L,
    \end{align}
    finishing the proof.  
\end{proof}
\begin{remark}
For subtle reasons connected to the non-separability of $L^\infty(0,T;H^1_x)$, the second moment in \eqref{eqn_AA} is written explicitly. Indeed, expressing it as a $L^2_\Omega$-norm instead would suggest strong measurability in  $L^\infty(0,T;H^1_x)$, which we refrain from in the definition of $\mathscr{M}_{L,T}$. Also in the following calculations, we make this distinction explicit, cf.\ \eqref{Eq266}, \eqref{eq:kdsnfdsjf} and \eqref{Eq33} below. For the obtained fixed point however, we do recover said strong measurability and even continuity in $H^1_x$ a posteriori, see Step 4 of  the following proof.
\end{remark}

We  recall the exponent $p$ from Assumption \ref{ass:nonlinear} and that $(r,p+1)$ is a Strichartz pair for some $r\in (2,\infty)$ by the comments below \eqref{eq:Sobolev_embedding}.
\begin{proposition}[Truncated equation---existence]\label{prop:trunc_ex} Suppose that Assumption \ref{ass:nonlinear} holds and let $R,T_0 \in (0,\infty)$. If additionally $u_0 \in L^2_{\Omega}(H^1_x)$ then there exists $u \in L^2_{\mathcal{P}}(C([0,T_0];H^1_x) \cap L^r(0,T_0;W^{1,p+1}_x))$ such that, $\P$-a.s., the mild solution formula
\begin{equation}
    \label{eq:nlsmildtruncated}
    \begin{aligned}
        u(t) \,=\, S(t)u_0\, &+\, {\sum_{n=1}^N }\int_0^t S(t-t')\Bigl[ \Psi_n\bigl({F}_1^{(n)}( u(t'))\, +\,  {F}_2^{(n)}(\Phi_R(u(t')))\bigr)\Bigr]\,\d t' \\
        &+ \,\int_0^t S(t-t')\bigl[G_1(u(t')) + G_2(\Phi_R(u(t')))\bigr] \, \d W(t'),
    \end{aligned}
\end{equation} holds for $t \in [0,T_0]$.
Moreover, we have $u\in L^2_{\mathcal{P}}(L^{\tilde{r}}(0,T_0;W^{1,\tilde{q}}_x))$ for every admissible $(\tilde{r},\tilde{q})$.
\end{proposition}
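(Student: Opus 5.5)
We will obtain $u$ as the fixed point of the map $\mathcal{K}$ given by the right-hand side of \eqref{eq:nlsmildtruncated} on the complete metric space $\mathscr{M}_{L,T}$ of Lemma \ref{lemma:metric_space}. The time $T\le T_0$ is chosen small, depending only on $R$, the constants $M,\gamma$, the norms of $\Psi_n$ and $\phi$, and $L=2\gamma\,\mathbb{E}^{1/2}\|u_0\|_{H^1_x}^2+1$ (say). The crucial point is that $T$ does not depend on $u_0$ beyond this bound, and not on the starting time. Hence the construction can be iterated on $[T,2T],[2T,3T],\dots$ with the new $\mathcal{F}_{kT}$-measurable initial datum $u(kT)\in L^2_\Omega(H^1_x)$, producing a solution on all of $[0,T_0]$. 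The growth of $L$ along the iteration is harmless, because the linear bounds give $\mathbb{E}^{1/2}\|u(kT)\|_{H^1}^2\le C(1+\mathbb{E}^{1/2}\|u((k-1)T)\|^2_{H^1})$.

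\textbf{Step 1 (self-map).} For $u\in\mathscr{M}_{L,T}$, write $u_R=\Phi_R(u)$. Then $\|u_R\|_{L^{p+1}_x}\le R$ pointwise in $t$ by \eqref{eq:PhiRtrunc}. Moreover $\|u_R\|_{W^{1,p+1}_x}\le\|u\|_{W^{1,p+1}_x}$ because $\theta_R\le1$.
\begin{itemize}
\item The term $S(t)u_0$ is bounded by Theorem \ref{thm:hom_Strichartz} with $k=1$.
\item For the $F_1$ term, Corollary \ref{cor:mult_strichartz} with the dual pair $(\infty,2)$ and \eqref{eq:bddness_F1} give the bound $C\,T\sup_t\|u\|_{H^1_x}$.
\item For the $F_2$ term, use the dual pair $(r,p+1)$ together with \eqref{eq:F2_bound}. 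This gives $C R^{p-1}\|u\|_{L^{r'}(0,T;W^{1,p+1}_x)}\le CR^{p-1}T^{1-2/r}\|u\|_{L^r W^{1,p+1}}$ by Hölder in time, since $r>2$.
\item For the stochastic term, Corollary \ref{lemma:stochStrich} with $\zeta=2$, $k=1$, together with \eqref{eq:bddness_G1} and \eqref{eq:G2_bound}, gives the bound $C\bigl(T^{1/2}+R^{(p-1)/2}T^{1/2-1/r}\bigr)$ times the $L^\infty H^1\cap L^rW^{1,p+1}$ norm, where we applied Hölder in time to $\|u\|_{L^2(0,T;W^{1,p+1})}$.
\end{itemize}
Estimating the $L^\infty H^1$ and $L^rW^{1,p+1}$ norms simultaneously via the admissible pairs $(\infty,2)$ and $(r,p+1)$ yields
\[
\mathbb{E}^{1/2}\|\mathcal{K}u\|^2 \le 2\gamma\,\mathbb{E}^{1/2}\|u_0\|_{H^1_x}^2+C_R T^{\kappa}L
\]
for some $\kappa>0$, which is $\le L$ for $T$ small. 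Progressive measurability and $\mathcal{K}u\in L^2_\mathcal{P}(C([0,T];L^2_x)\cap L^r L^{p+1}_x)$ follow from the $k=0$ estimates. A subtle point is that the $H^1$ moment is only a sup-type bound, not a strongly measurable norm; this matches the definition of $\mathscr{M}_{L,T}$.

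\textbf{Step 2 (contraction).} This step repeats Step 1 with $k=0$, using \eqref{eq:Lipschtiz_est_F1}, \eqref{eq:Lipschtiz_est_G1}, \eqref{eq:F2_lip} and \eqref{eq:G2_lip} applied to $\Phi_R(u),\Phi_R(v)$. Since $\|\Phi_R(u)\|_{L^{p+1}_x}\le R$, the growth factors are bounded by $2R^{p-1}$, resp. $2R^{(p-1)/2}$, and \eqref{eq:PhiRlip} converts $\|\Phi_R u-\Phi_R v\|_{L^{p+1}_x}$ into $2\|u-v\|_{L^{p+1}_x}$. Hölder in time again yields $d(\mathcal{K}u,\mathcal{K}v)\le C_RT^\kappa d(u,v)\le\frac12 d(u,v)$. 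This is the step where the truncation design is essential: only $L^{p+1}_x$ norms enter the Lipschitz constants, so no $W^{1,p+1}$ control of $u-v$ is needed. The Banach fixed point theorem then gives $u$.

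\textbf{Step 3 (regularity).} We expect this to be the main obstacle: upgrading the fixed point to $u\in L^2_\mathcal{P}(C([0,T];H^1_x))$ and to all admissible $(\tilde r,\tilde q)$. Having the fixed point, we plug it back into the right-hand side. The integrands $\Psi_nF^{(n)}_i(\cdot)$ and $G_i(\cdot)$ now lie in the spaces $L^1(0,T;H^1_x)+L^{r'}(0,T;W^{1,(p+1)'}_x)$ and $L^2_\mathcal{P}(L^2(0,T;H^1_x))$, resp., with moment bounds. Theorem \ref{thm:strichartz}, Corollary \ref{lemma:stochStrich} and Corollary \ref{cor:mult_strichartz} (with the $C([0,T];H^1_x)$ version for the pair $(\infty,2)$) then give continuity in $H^1_x$ and membership in $L^2_\mathcal{P}(L^{\tilde r}(0,T;W^{1,\tilde q}_x))$. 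Strong measurability follows since each term is a (stochastic) convolution of a progressively measurable integrand. The delicate point is that measurability of the integrands in $H^1$-based spaces has to be justified from the $L^2$-level measurability. We would handle this by approximating via $S(t)$ of mollified data, or by noting that $H^1_x$-valued weak measurability plus separability suffices (Pettis).
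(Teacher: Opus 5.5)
Your proposal follows the paper's proof essentially step by step: a contraction of the mild-solution map on $\mathscr{M}_{L,T}$ in the weaker $C([0,T];L^2_x)\cap L^r(0,T;L^{p+1}_x)$ metric, with $\Phi_R$ used only to bound the $L^{p+1}_x$-dependent growth factors, H\"older in time to gain $T^{1-2/r}$ and $T^{1/2-1/r}$, regularity read off from the Strichartz bounds, and iteration on intervals of fixed length. One bookkeeping slip needs fixing: with your choice $L=2\gamma\|u_0\|_{L^2_\Omega(H^1_x)}+1$, the self-map condition $2\gamma\|u_0\|_{L^2_\Omega(H^1_x)}+C_RT^\kappa L\le L$ makes $T$ depend on $u_0$, so as $L$ grows geometrically along the iteration the step lengths could sum to less than $T_0$; since the bound is linear in $L$, take instead $L\ge 4\gamma\|u_0\|_{L^2_\Omega(H^1_x)}$ (with $L>0$), so that $C_RT^\kappa\le 1/2$ suffices and $T$ is independent of $L$ and $u_0$, exactly as in \eqref{Eq31}.
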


\begin{proof}
At the heart of this proof is to show that there exists $T\in (0,\infty)$ independent of $u_0$ and $L\in (0,\infty)$, such that 
    \begin{align}\begin{split}\label{eq:Xi}
        \Xi_{u_0} \colon 
         &(\mathscr{M}_{L,T},d_{\mathscr{M}_{L,T}}) \to (\mathscr{M}_{L,T},d_{\mathscr{M}_{L,T}}),
         \\ & u  \mapsto
         S(\cdot)u_0 \,+\, \textstyle{\sum_{n=1}^N}\bigl(P_n F_1^{(n)}(u) \,+\,  P_n F_2^{(n)}(\Phi_R(u))\bigr) \,+\, QG_1(u) \,+\,  Q G_2(\Phi_R(u)),
    \end{split}\end{align}
is a Lipschitz contraction, where we use the shorthand notations
    \begin{align*}
        P_n f (t) \,&=\,
        \int_0^{t} S({t}-t') \bigl(\Psi_n f(t')\bigr)\,\d t',
        \\
        Q g(t) \,&=\, 
        \int_0^{t} S({t}-t')g(t')\,\d W({t'}),
    \end{align*}
whenever these are well-defined.
We divide it into several steps.

\emph{Step 1 (Space-time bounds on truncated nonlinearities).} We let $n\in \{1,\dots, N\}$ and assume that 
\begin{equation}\label{Eq79}
	u,v\in \mathscr{M}_{L,T},
\end{equation} for $L,T\in (0,\infty)$, 
and let $M<\infty$ be such that the assertions of Lemmas \ref{lem:F1G1est}--\ref{lem:F2est} hold. Then, we notice that after integrating \eqref{eq:bddness_F1} and \eqref{eq:Lipschtiz_est_F1}, we have\noeqref{Eq25a,Eq25b}
\begin{subequations}\label{Eq25}
\begin{align} \label{Eq25a}
    \lVert F_1^{(n)}(u) \rVert_{L^1(0,T;H^1_x)} & \,\leq\,  M T \lVert u \rVert_{L^\infty(0,T;H^1_x)},\\
\label{Eq25b} \lVert F_1^{(n)}(u) - F_1^{(n)}(v) \rVert_{L^1(0,T;L^2_x)} &\,\leq\, M T \lVert u - v \rVert_{C([0,T]; L^2_x)},
\end{align}
\end{subequations}
$\mathbb{P}$-almost surely.
Similarly, after evaluating the $L^2$-norm in time of \eqref{eq:bddness_G1} and \eqref{eq:Lipschtiz_est_G1} and using H\"older's inequality, we find\noeqref{Eq26a,Eq26b}
\begin{subequations}
    \label{Eq26}
\begin{align}\label{Eq26a}
    \lVert G_1(u) \rVert_{L^2(0,T;H^1_x)} &\,\leq\, M T^{\frac{1}{2}} \lVert u \rVert_{L^\infty(0,T;H^1_x)},
    \\\label{Eq26b}
    \lVert G_1(u) - G_1(v) \rVert_{L^2(0,T;L^2_x)} &\,\leq\, M T^{\frac{1}{2}} \lVert u - v \rVert_{C([0,T]; L^2_x)},
\end{align}
\end{subequations}
$\mathbb{P}$-almost surely, as well.

Next, by the properties of the truncation \eqref{eq:PhiRest} and the estimates \eqref{eq:F2G2est} on $F_2^{(n)}$ and $G_2$, we deduce that
\begin{align*}
    \lVert F_2^{(n)}(\Phi_R(u)) \rVert_{W^{1,1+{1}/{p}}_x} &\,\leq\,  M R^{p-1} \lVert u \rVert_{W^{1,p+1}_x}, \\
    \lVert G_2(\Phi_R(u)) \rVert_{H^1_x} &\,\leq\,  M R^{{(p-1)}/{2}} \lVert u \rVert_{W^{1,p+1}_x}, \\
    \lVert F_2^{(n)}(\Phi_R(u)) - F_2^{(n)}(\Phi_R(v)) \rVert_{L^{1+{1}/{p}}_x} &\,\leq\, 4 M R^{p-1}\lVert u - v \rVert_{L^{p+1}_x}, \\
    \lVert G_2(\Phi_R(u)) - G_2(\Phi_R(v)) \rVert_{L^2_x} &\,\leq\, 4 M R^{{(p-1)}/{2}} \lVert u - v \rVert_{L^{p+1}_x},
\end{align*}
$\mathbb{P} \otimes \d t$-almost everywhere.
To proceed, we define $\alpha = {1}/{r'}-{1}/{r}$ and $\beta = {1}/{2}-{1}/{r}$, which lie both in $(0,1)$ since $r\in (2,\infty)$. Evaluating the $L^{r'}$-norm and $L^2$-norm, respectively,  in time together with an application of H\"older's inequality yields, $\P$-a.s.,\noeqref{Eq27a,Eq27b,Eq27c,Eq27d}
\begin{subequations}\label{Eq27}
\begin{align}
\label{Eq27a}
    \lVert F_2^{(n)}(\Phi_R(u)) \rVert_{L^{r'}(0,T;W^{1,1+{1}/{p}}_x)} &\,\leq\, M R^{p-1} T^{\alpha} \lVert u \rVert_{L^r(0,T;W^{1,p+1}_x)}, \\
\label{Eq27b}
    \lVert G_2(\Phi_R(u)) \rVert_{L^2(0,T;H^1_x)} &\,\leq\,  M R^{{(p-1)}/{2}} T^{\beta} \lVert u \rVert_{L^{r}(0,T;W^{1,p+1}_x)}, \\
\label{Eq27c}
    \lVert F_2^{(n)}(\Phi_R(u)) - F_2^{(n)}(\Phi_R(v)) \rVert_{L^{r'}(0,T;L^{1+{1}/{p}}_x)} &\,\leq\, 4 M R^{p-1} T^{\alpha} \lVert u - v \rVert_{L^r(0,T;L^{p+1}_x)}, \\
\label{Eq27d}
    \lVert G_2(\Phi_R(u)) - G_2(\Phi_R(v)) \rVert_{L^2(0,T;L^2_x)} &\,\leq\, 4 M R^{{(p-1)}/{2}} T^{\beta} \lVert u - v \rVert_{L^r(0,T;L^{p+1}_x)}.
\end{align}
\end{subequations}

\emph{Step 2 (Estimates on the terms on the right-hand side of \eqref{eq:Xi}).} For the rest of the proof we fix $\gamma<\infty$ such that the assertions of Theorem \ref{thm:hom_Strichartz} and Corollaries \ref{lemma:stochStrich}--\ref{cor:mult_strichartz} hold for the admissible pairs $(\infty,2)$ and $(r,p+1)$. 
Then, from the homogeneous Strichartz estimates, Theorem \ref{thm:hom_Strichartz}, we see that
\begin{equation}\label{eq:growth1}
    \|S(\cdot)u_0\|_{Y} \,\le\,2 \gamma \|u_0\|_{H^1_x},
\end{equation}
$\P$-a.s.,
where we employ the notation
\begin{align*}
    X\,&=\, C([0,T];L^2_x)\cap L^r(0,T;L_x^{p+1}),\\
    Y \,&=\, C([0,T];H^1_x)\cap L^r(0,T;W_x^{1,p+1}).
\end{align*}
By combining the inhomogeneous Strichartz estimates from Corollary \ref{cor:mult_strichartz} 
with the bounds \eqref{Eq25} we deduce that $\P$-a.s.\
\begin{subequations}
    \begin{align} \label{eq:growth2}
          \lVert P_n F_1^{(n)}(u) \rVert_{Y} &\,\leq\, 2\gamma C_{\Psi_{n}}  M T  \lVert u \rVert_{L^\infty(0,T;H^1_x)},\\
\lVert P_n F_1^{(n)}(u) - P_n F_1^{(n)}(v) \rVert_{X} &\label{eq:lip1}\,\leq\, 2\gamma \tilde{C}_{\Psi_{n}} M T \lVert u - v \rVert_{C([0,T]; L^2_x)},
    \end{align}
\end{subequations}
where we set 
\begin{equation}\label{constants_dep_on_psi}
C_{\Psi_{n}}  \,=\, 
\|\Psi_n\|_{L^\infty_x} \,+\, \|\nabla\Psi_n\|_{L^\nu(\R^d;\C^d)},\qquad 
\tilde{C}_{\Psi_{n}}  \,=\, \|\Psi_n\|_{L^\infty_x},\qquad 
n \in \{1,\dots, N\}.
\end{equation}
Similarly, combining the stochastic Strichartz estimates from Corollary \ref{lemma:stochStrich}  with  \eqref{Eq26} we obtain 
\begin{subequations}
    \label{Eq266}
\begin{align} \label{eq:growth3}
    \lVert QG_1(u) \rVert_{{L^2_{\mathcal P}(Y)}} &\,\leq\, 2\gamma C_\phi M T^{\frac{1}{2}} \mathbb E^{1/2}\bigl[\lVert u \rVert_{L^\infty(0,T;H^1_x)}^2\bigr] , 
    \\\label{eq:lip2}
    \lVert QG_1(u) - QG_1(v) \rVert_{{L^2_{\mathcal P}(X)}} &\,\leq\, 2\gamma \tilde{C}_\phi M T^{\frac{1}{2}}  \lVert u - v \rVert_{L_{\mathcal{P}}^2(C([0,T]; L^2_x))},
\end{align}
\end{subequations}
with 
\begin{align}\label{constants_dep_on_phi}
C_\phi \,=\, \|\phi\|_{L^\infty(\R^d;\ell^2)} \,+\, 
\|\nabla \phi\|_{L^\nu(\R^d;\ell^2(\C^d))}
,\qquad \tilde{C}_\phi\,=\, \|\phi\|_{L^\infty(\R^d;\ell^2)}.
\end{align}
We proceed analogously using the estimates \eqref{Eq27} to obtain
\begin{subequations}\label{eq:kdsnfdsjf}
\begin{align} \label{eq:growth4}
    \lVert P_n F_2^{(n)}(\Phi_R(u)) \rVert_{Y} &\,\leq\, 2\gamma C_{\Psi_n} M R^{p-1} T^{\alpha} \lVert u \rVert_{L^r(0,T;W^{1,p+1}_x)}, \\
    \label{eq:growth5}
    \lVert Q G_2(\Phi_R(u)) \rVert_{L_{\mathcal P}^2(Y)} &\,\leq\, 2\gamma C_\phi  M R^{\frac{p-1}{2}} T^{\beta} \mathbb E^{1/2} \bigl[ \lVert u \rVert_{L^{r}(0,T;W^{1,p+1}_x)}^2\bigr] ,
    \\\label{eq:lip3}
    \lVert P_nF_2^{(n)}(\Phi_R(u)) - P_nF_2^{(n)}(\Phi_R(v)) \rVert_{X} &\,\leq\, 8\gamma \tilde{C}_{\Psi_n}M R^{p-1} T^{\alpha} \lVert u - v \rVert_{L^r(0,T;L^{p+1}_x)}, \\
    \label{eq:lip4}
    \lVert Q G_2(\Phi_R(u)) - Q G_2(\Phi_R(v)) \rVert_{L_{\mathcal P}^2(X)} &\,\leq\, 8 \gamma \tilde{C}_\phi M R^{\frac{p-1}{2}} T^{\beta} \lVert u - v \rVert_{L_{\mathcal{P}}^2(L^r(0,T;L^{p+1}_x))},
\end{align}
where the pathwise bounds \eqref{eq:growth4} and \eqref{eq:lip3} hold again $\P$-almost surely.
\end{subequations}

\emph{Step 3 (There exists $L,T\in (0,\infty)$ such that   $\Xi_{u_0}$ is a contraction).} 
For this last step, we introduce the cumulative constants $C_\Psi =\sum_{n=1}^NC_{\Psi_n}$ and  $\tilde{C}_\Psi =\sum_{n=1}^N\tilde{C}_{\Psi_n}$. 
Then,  we choose the constants from  \eqref{Eq79}  such that  
\begin{equation}\label{Eq31}
L \,\ge\, L_*(\gamma, u_0) \,=\, 4\gamma \|u_0 \|_{L_\Omega^2(H^1_x)},\qquad T\,\le\,  T_*(\gamma,p ,C_\Psi,C_\phi, M,R),
\end{equation}
where the latter constant is  determined by
\begin{equation*}
	\gamma M\bigl(
	C_{\Psi}T_*+ C_{\Psi}R^{p-1}T_*^\alpha +C_\phi T_*^{1/2}
	+C_\phi R^{\frac{p-1}{2}}T_*^\beta
	\bigr)\,=\, {1}/{16}.
\end{equation*}
Using the growth bounds \eqref{eq:growth1}, \eqref{eq:growth2}, \eqref{eq:growth3}, \eqref{eq:growth4} and \eqref{eq:growth5} from the  previous steps, we find then
\begin{align}
	\label{Eq33}&
	\| \Xi_{u_0} u\|_{L^2_{\mathcal{P}}(Y)} \,\\ &\quad\le\,2 
	\gamma \|u_0\|_{L_\Omega^2(H^1_x)}\,+\, 
	2\gamma M\bigl(
	C_{\Psi}T+ C_{\Psi}R^{p-1}T^\alpha +C_\phi T^{1/2}
	+C_\phi R^{\frac{p-1}{2}}T^\beta
	\bigr)\mathbb E^{1/2} \bigl[\|u\|_{L^\infty(0,T;H^1_x)\cap L^r(0,T;W_x^{1,p+1})}^2 \bigr]\\ &\quad \le\, L/2 \,+\, L/8,
\end{align}
and therefore $\Xi_{u_0}$ maps $\mathscr{M}_{L,T}$ into itself. We deduce  similarly
\begin{align}\begin{split}\label{Eq110}&
\| \Xi_{u_0} u -\ \Xi_{u_0} v\|_{L^2_{\mathcal{P}}(X)}
\\&\quad  \le\, 8\gamma M (\tilde{C}_\Psi  T+ \tilde{C}_\Psi R^{p-1}T^\alpha +\tilde{C}_\phi T^{1/2}
+\tilde{C}_\phi R^{\frac{p-1}{2}}T^\beta)\|u-v\|_{L^2_{\mathcal{P}}(C([0,T]; L^2_x)\cap L^r(0,T;L_x^{p+1}))}
\\&\quad  \le\, d_{\mathscr{M}_{L,T}}(u,v)/2,
\end{split}\end{align}
from
\eqref{eq:lip1}, \eqref{eq:lip2}, \eqref{eq:lip3} and \eqref{eq:lip4} and thus $ \Xi_{u_0}$ is a contraction as desired.

\emph{Step 4 (Conclusion).} We choose $L,T\in (0,\infty)$ as in \eqref{Eq31}, so that there exists a unique fixed point $u$ of 
$\Xi_{u_0}$ on $(\mathscr{M}_{L,T},d_{\mathscr{M}_{L,T}})$  by the previous step and Lemma \ref{lemma:metric_space}. 
Regarding its regularity, we deduce from \eqref{Eq33} that $
u\in L_{\mathcal{P}}^2( C([0,T];H^1_x))$. Using the Strichartz estimates from Theorem \ref{thm:hom_Strichartz} and Corollaries \ref{lemma:stochStrich}--\ref{cor:mult_strichartz} for an admissible pair $(\tilde{r},\tilde{q})$ in combination with the growth bounds \eqref{Eq25a}, \eqref{Eq26a}, \eqref{Eq27a} and \eqref{Eq27b} yields  analogously to the second step that  $u\in L^2_{\mathcal{P}}(L^{\tilde{r}}(0,T;W^{1,\tilde{q}}_x))$. This finishes the proof in the case $T_0 \le T$ and otherwise we can  iterate the above on time intervals $[kT, (k+1)T]$ for $k=1,\dots, \lceil T_0/T \rceil -1$ with initial value $u(kT)$, up to enlarging $L$. 
\end{proof}

\begin{corollary}[Truncated equation---uniqueness]\label{cor:trunc_uniq}
	Under the assumptions of Proposition \ref{prop:trunc_ex}, if there exists a stopping time $\sigma\le T_0$ and $v \in L^2_{\mathcal{P}}(C([0,\sigma];H^1_x) \cap L^r(0,\sigma;W^{1,p+1}_x))$  such that \eqref{eq:nlsmildtruncated} holds for all $t\in [0,\sigma]$ with $u$ replaced by $v$, $\P$-a.s., then we have $v= u$, $\P$-a.s., on $[0,\sigma]$.
\end{corollary}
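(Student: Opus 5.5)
The plan is to reuse the contraction estimate \eqref{Eq110} from the proof of Proposition \ref{prop:trunc_ex}, localized to the random interval $[0,\sigma]$, and iterate over deterministic time steps of length $T$. The difficulty is that $v$ is only defined up to $\sigma$, while the fixed-point space $\mathscr{M}_{L,T}$ consists of processes on $[0,T]$. I would therefore work directly with the difference $w=u-v$ on $[0,\sigma]$. I would not try to place $v$ in $\mathscr{M}_{L,T}$: the contraction estimates \eqref{eq:lip1}, \eqref{eq:lip2}, \eqref{eq:lip3} and \eqref{eq:lip4} only involve the weak norm $X$ and use no $H^1$ bound. This is precisely the benefit of the truncation $\Phi_R$, which makes the modified nonlinearities globally Lipschitz from $L^{p+1}_x$, together with the Lipschitz bounds of Lemma \ref{lem:F1G1est}.

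The first step handles the random endpoint. I would extend the integrands by zero after $\sigma$. Concretely, set $\tilde F(t)=\mathbf 1_{[0,\sigma]}(t)\,\Psi_n\bigl(F^{(n)}_1(u)-F^{(n)}_1(v)+F^{(n)}_2(\Phi_R u)-F^{(n)}_2(\Phi_R v)\bigr)(t)$, and define $\tilde G$ analogously. Since $\mathbf 1_{[0,\sigma]}$ is progressive, $\tilde G$ is an admissible stochastic integrand. For $t\le\sigma$ the difference $w(t)$ then equals $\int_0^tS(t-t')\tilde F\,\d t'+\int_0^tS(t-t')\tilde G\,\d W$. This relies on the fact that stochastic integrals stopped at $\sigma$ agree with integrals of integrands multiplied by $\mathbf 1_{[0,\sigma]}$. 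Applying Corollaries \ref{cor:mult_strichartz} and \ref{lemma:stochStrich} with $k=0$ on $[0,T]$, together with the pointwise bounds of Step 1 of the previous proof, gives
\[
\mathbb E^{1/2}\Bigl[\|w\|^2_{C([0,\sigma\wedge T];L^2_x)\cap L^r(0,\sigma\wedge T;L^{p+1}_x)}\Bigr]\le \tfrac12\,\mathbb E^{1/2}\Bigl[\|w\|^2_{C([0,\sigma\wedge T];L^2_x)\cap L^r(0,\sigma\wedge T;L^{p+1}_x)}\Bigr]
\]
for the same $T=T_*$ as in \eqref{Eq31}. Here I use that $\|\mathbf 1_{[0,\sigma]}f\|_{L^{r'}(0,T)}$ is controlled by the norm over $[0,\sigma\wedge T]$, and that a norm over $[0,t]$ for $t\le\sigma$ is dominated by the norm of the zero-extended function. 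The left-hand side is finite because $u$ and $v$ lie in the stated $L^2_{\mathcal P}$ spaces. Hence $w=0$ on $[0,\sigma\wedge T]$ almost surely.

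The second step iterates over the remaining intervals. On $[kT,(k+1)T]$ I would restart the equation from the common value $u(kT)=v(kT)$ on the event $\{\sigma>kT\}$, using the stopping times $\sigma\wedge (k+1)T$ and the semigroup property to rewrite the mild formula from time $kT$. Since $T_*$ does not depend on the initial data, the same contraction applies. Alternatively, one can avoid restarting by applying a Gronwall-type argument to the whole interval $[0,\sigma]$ through a partition. After $\lceil T_0/T\rceil$ steps this gives $u=v$ on $[0,\sigma]$.

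I expect the main obstacle to be the rigorous treatment of the stochastic convolution with a random endpoint. The map $t\mapsto\int_0^tS(t-t')G\,\d W$ is not a martingale, so stopping it requires care. I would use the identity $\int_0^{t}S(t-t')\mathbf 1_{[0,\sigma]}g\,\d W=\int_0^tS(t-t')g\,\d W$ for $t\le\sigma$, proved first for elementary integrands and then by approximation. Once this is in place, the rest follows directly from the estimates already established.
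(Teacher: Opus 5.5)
Your proposal is correct and follows essentially the same route as the paper. The paper applies $\Xi_{u_0}$ to the zero-extended processes $\mathbf{1}_{[0,\sigma]}u$ and $\mathbf{1}_{[0,\sigma]}v$, which is the same as your multiplication of the integrands by $\mathbf{1}_{[0,\sigma]}$ because $F_1^{(n)},F_2^{(n)},G_1,G_2$ and $\Phi_R$ all vanish at $0$. It then uses the Lipschitz part of \eqref{Eq110} to get the factor $1/2$ on $[0,\sigma\wedge T]$ and iterates over $[\sigma\wedge kT,\sigma\wedge(k+1)T]$; your remarks on finiteness of the left-hand side and on localizing the stochastic convolution at $\sigma$ only make explicit what the paper leaves implicit.
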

    \begin{proof}
    	We let $T$ be as in \eqref{Eq31} and use that \eqref{eq:nlsmildtruncated} holds for $u$ as well as $v$ on $[0,\sigma]$, to estimate 
    	\begin{align*}
    		\| u-v \|_{L^2_{\mathcal{P}}(C([0,\sigma \wedge T];L^2_x)\cap L^r(0,\sigma \wedge T;L_x^{p+1}))} \,\le\, \| \Xi_{u_0 } ( \mathbf{1}_{[0,\sigma ] }u )- \Xi_{u_0 } ( \mathbf{1}_{[0,\sigma ] } v) \|_{L^2_{\mathcal{P}}(C([0, T];L^2_x)\cap L^r(0, T;L_x^{p+1}))}.
    	\end{align*}
    In light of the first estimate in \eqref{Eq110}, we can bound the right-hand side again by 
    \[
    \frac{1}{2} \| u-v \|_{L^2_{\mathcal{P}}(C([0,\sigma \wedge T];L^2_x)\cap L^r(0,\sigma \wedge T;L_x^{p+1}))}.
    \]
    It follows that $u = v$, $\P$-a.s., on $[0,\sigma \wedge T]$ and if $T_0>T$ we can iterate this procedure  on intervals $[\sigma \wedge (kT),  \sigma \wedge (k+1)T]$,  for $k=1,\dots, \lceil T_0/T \rceil -1$. 
    \end{proof}
\subsection{Transference to the original problem}\label{Sec:Transf_to_orig_prob}
We  turn our attention to the proof of Theorem \ref{Thm:WP_H1}, which is obtained from the results of the previous subsection together with suitable localization arguments.
\begin{proof}[Proof of Theorem \ref{Thm:WP_H1}]
    We also divide this proof into several steps.

    \emph{Step 1 (Reduction and setup).}
    Firstly, we observe that if $u_0\notin L_\Omega^2(H^1_x)$, we can pass to the equivalent probability law  ${\mathbb{Q}}\propto \exp(-\|u_0\|_{H^1_x})\mathbb{P}$, with respect to which $\|u_0\|_{H^1_x}$ admits all moments. As the density function is $\mathcal{F}_0$-measurable, the property of being a local mild solution to \eqref{eq:general_SNLS} as defined in Definition \ref{defi_local_mild_sol} is invariant under this change of measure. Hence, we can assume without loss of generality that $u_0\in L_\Omega^2(H^1_x)$.
   By Proposition \ref{prop:trunc_ex},
    there exists then for every $R, T_0\in \N$ a process $u_{R,T_0} \in L^2_{\mathcal{P}}(C([0,T_0];H^1_x) \cap L^r(0,T_0;W^{1,p+1}_x))$ satisfying, $\P$-a.s., \eqref{eq:nlsmildtruncated}  for $t\in [0,T_0]$. By Corollary \ref{cor:trunc_uniq} we see that these solutions extend each other as $T_0$ increases so that we can define 
    $u_R$ as the process which coincides with $u_{R,T_0}$, $\P$-a.s., on $[0,T_0]$ for any $T_0\in \N$. Then we set 
    \begin{equation}
    \label{eq:deftauR}
    \tau_{R} \,=\, \inf\Bigl\{
t\in [0,\infty) \,\Big|\, \|u_{R}\|_{C([0,t]; L^{p+1}_x)} \ge R
\Bigr\},
    \end{equation}
	with
    the conventions $\inf \emptyset =\infty$  and  $[0,\tau_R]= [0,\infty)$ on $\{\tau_R=\infty\}$.

    \emph{Step 2 (Defining the candidate solution).} Now let $\tilde{R}<R$, so that
    $u_{\tilde{R}}$ satisfies also \eqref{eq:nlsmildtruncated}, $\P$-a.s., on $[0,\tau_{\tilde{R}}]$ and hence we have $u_{\tilde{R}} = u_R$ on $[0,\tau_{\tilde{R}}]$, $\P$-a.s., in light of Corollary \ref{cor:trunc_uniq}. As a result, we deduce moreover $\tau_R \ge \tau_{\tilde{R}}$, $\P$-a.s.,  and using the continuity  of $u_R$
    in $L^{p+1}_x$ due to \eqref{eq:Sobolev_embedding} even $\tau_R>\tau_{\tilde{R}}$ on the set $\{\|u_0\|_{L^{p+1}_x}<R\}$. Thus we can define 
    \begin{equation}\label{Eq34}
        \tau \,=\, \lim_{R\to\infty}\tau_R,\qquad  u\,=\, u_R \text{ on }
        [0,\tau_R],
    \end{equation}
$\P$-a.s.,
    and because the events  $\{\|u_0\|_{L^{p+1}_x}<R\}$ exhaust $\Omega$
    it holds
    \begin{equation}
            \label{Eq32} \P\biggl(
    \bigcup_{R\in \N}[0,\tau_R] \,= \, [0,\tau) \biggr) \,=\, 1.
        \end{equation}
    The latter implies in particular that $\tau>0$, $\mathbb{P}$-almost surely.

    \emph{Step 3 ($(u,\tau)$ is the maximal, unique local mild solution to \eqref{eq:general_SNLS}).} 
    We first observe that $u\in C([0,\tau); H^1_x)\cap L_{\mathrm{loc}}^r([0,\tau);W_x^{1,p+1})$, $\P$-a.s., since it inherits the path-regularity  of the processes $u_R$.
    Furthermore, on the interval $[0,\tau_R]$ the truncated version  \eqref{eq:nlsmildtruncated} for $u_R$ is equivalent to \eqref{eq:mild_solution_SNLS} and, as $u=u_R$ on $[0,\tau_R]$, we deduce that \eqref{eq:mild_solution_SNLS} holds $\P$-a.s.\ on $[0,\tau )$ by \eqref{Eq32}.
    Therefore, we conclude that $(u,\tau)$ is a local mild solution to \eqref{eq:general_SNLS}.
    Let $(v,\sigma)$ be another local mild solution. Then we define for $L,R,T_0\in \N$ stopping times
    \[
    \sigma_{(L,R,T_0)} \,=\, \inf\Bigl\{
    t\in [0,\sigma)\,\Big|\,  
    \|v\|_{C([0,t]; H^1_x)\cap L^r(0,t;W_x^{1,p+1})}\ge L 
    \text{ or }
    \|v\|_{C([0,t]; L^{p+1}_x)}\ge R 
    \Bigr\}  \wedge (\sigma \wedge  T_0),
    \]
    so that $   \sigma_{(L,R,T_0)} \to \sigma $ as $L,R,T_0\to\infty$, $\mathbb{P}$-almost surely. The process 
    \[
    \tilde{v} \,=\,
        v\mathbf{1}_{\{\sigma>0\}}\,+\,u_0\mathbf{1}_{\{\sigma =0\}}
    \]
    lies consequently in $L^2_{\mathcal{P}}(C([0,\sigma_{(L,R,T_0)}];H^1_x) \cap L^r(0,\sigma_{(L,R,T_0)};W^{1,p+1}_x))$ and is a solution to the truncated mild formulation \eqref{eq:nlsmildtruncated}. Whence, using Corollary \ref{cor:trunc_uniq}, we deduce that, $\P$-a.s., $\tilde{v} = u_R$ on $[0, \sigma_{(L,R,T_0)}]$ and  $\sigma_{(L,R,T_0)} \le \tau_R$. By \eqref{Eq32} it follows that $\sigma_{(L,R,T_0)}\le \tau$ and by \eqref{Eq34} also $\tilde{v}
    =u$ on $[0, \sigma_{(L,R,T_0)}]$, $\P$-almost surely. By taking the limits $L,R,T_0\to\infty$ we conclude that, $\P$-a.s., $\sigma \le \tau$ and $v = \tilde{v} = u $ on $[0,\sigma )$. In other words:  $(u,\tau)$ is maximal and unique.

    \emph{Step 4 (Blow-up alternative and regularity).}
    We use the second identity from \eqref{Eq34} combined with $\|u_R(\tau_R)\|_{L_x^{p+1}} = R$ on $\{0<\tau_R<\infty \}$
    to conclude that, $\P$-a.s.,
    \[
    {\tau}_{R} \,=\, \inf\Bigl\{
t\in [0,\tau) \,\Big|\, \|u\|_{C([0,t]; L^{p+1}_x)} \ge R
\Bigr\} .
    \]
    Hence, $\P$-a.s., we  have $\|u(\tau_R)\|_{L_x^{p+1}} \ge R$ on $\{\tau_R<\infty \}$ and on the latter set also $\tau_R<\tau$ by \eqref{Eq32}. Consequently, the  blow-up criterion \eqref{eq:blow_up} follows from the first identity of \eqref{Eq34}. The assertion regarding the additional regularity of $u$ is a consequence of the additional regularity of $u_R$ obtained in Proposition \ref{prop:trunc_ex}. 
\end{proof}
\section{A priori estimates}\label{Sec_APE}
In the previous section, we constructed a local maximal solution for any $H^1_x$-subcritical stochastic nonlinear Schr\"odinger equation, and showed that it satisfies the blow-up criterion~\eqref{eq:blow_up}.
In this section, we will show that in the $L^2_x$-subcritical case (meaning that $p$ in Assumption~\ref{ass:nonlinear} additionally satisfies $p < 1+4/d$), a blow-up criterion can be formulated in terms of only the $L^2_x$-norm. Our statement can be compared to~\cite[Proposition 8]{hornung_SNLS}, which treats power-type nonlinearities as in~\eqref{Eq100} and imposes 
the additional relation \eqref{Eq36} between $p$ and $q$.
Since we can cover the full subcritical range for both $p$ and $q$, our proof shows that the rightmost condition of \eqref{Eq36} is not actually needed to have global well-posedness.

Our precise statement is as follows.

\begin{proposition}[Blow-up criterion in $L^2$-subcritical case]
\label{prop:global}
     Suppose Assumption \ref{ass:nonlinear} holds, where $p$ additionally satisfies 
    \begin{equation}
    \label{eq:mass_subcrit}
        p\,\in \, (1, 1+4/d).
    \end{equation}
     Let $(u,\tau)$ be the maximal, unique local mild solution to \eqref{eq:general_SNLS} provided by Theorem \ref{Thm:WP_H1}.
     Then we have the blow-up alternative
	\begin{equation}\label{eq:blow_up_improv}
		\PP[\Big]{\tau < \infty,\, \sup_{t\in [0, \tau)}\norm{u(t)}_{L^{2}_x} <\infty } \,=\, 0.
	\end{equation}
\end{proposition}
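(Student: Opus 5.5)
The plan is to reduce \eqref{eq:blow_up_improv} to the criterion \eqref{eq:blow_up} of Theorem \ref{Thm:WP_H1}. Concretely, I will show that on events where the $L^2_x$-norm stays bounded on a finite time interval, the $L^{p+1}_x$-norm (indeed the Strichartz norm $L^r(0,t;W^{1,p+1}_x)$ and the $H^1_x$-norm) stays bounded too. The key is a Strichartz estimate at the level of one derivative, closed on short random intervals, whose length depends only on an $L^2_x$-bound. Mass-subcriticality \eqref{eq:mass_subcrit} is what makes the length independent of the $H^1_x$-size.

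\textbf{Step 1 (localization).} Fix $K,T_0\in\N$ and set
\[
\sigma_K \,=\, \inf\{t\in[0,\tau) : \|u(t)\|_{L^2_x}\ge K\}\wedge T_0,
\]
intersected with the localizing times $\tau_R$ from the proof of Theorem \ref{Thm:WP_H1}, so that all norms are finite. It suffices to show $\sup_{t<\sigma_K}\|u(t)\|_{L^{p+1}_x}<\infty$ a.s. Then \eqref{eq:blow_up} gives $\P(\tau\le T_0,\ \sup_{t<\tau}\|u\|_{L^2_x}<K)=0$, and taking unions over $K,T_0$ yields the claim.

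\textbf{Step 2 (nonlinear estimate).} The point is to exploit the Lipschitz/polynomial splitting again, but now with Gagliardo--Nirenberg in place of the $L^{p+1}$-truncation. The $F_1,G_1$ parts are linearly bounded in $H^1_x$ by Lemma \ref{lem:F1G1est}. For $F_2$, Lemma \ref{lem:F2est} gives
\[
\|F_2(u)\|_{W^{1,1+1/p}_x}\lesssim \|u\|_{L^{p+1}_x}^{p-1}\|u\|_{W^{1,p+1}_x}.
\]
Interpolation gives $\|u\|_{L^{p+1}_x}\lesssim \|u\|_{L^2_x}^{1-\theta}\|u\|_{W^{1,p+1}_x}^{\theta}$ with $\theta$ explicit. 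After Hölder in time, one obtains a bound of the form
\[
C(K)\,T^{\kappa}\,\|u\|_{L^r(W^{1,p+1})}^{1+\theta(p-1)}.
\]
This is still superlinear, so instead I would use the more standard route, estimating $\|u\|_{L^{p+1}_x}^{p-1}$ in a mixed norm $L^{a}(0,T;L^{p+1}_x)$. By interpolation between $L^\infty L^2$ and $L^r W^{1,p+1}$ this is controlled; the question is whether one can close linearly in the top norm. The cleanest way, which I would commit to, is a two-tier argument:
\begin{itemize}
\item First prove an $L^2$-level Strichartz bound. At the $k=0$ level, $\|u\|_{L^r(0,T;L^{p+1}_x)}$ is controlled by $C(K)$ on random intervals of length $T(K)$. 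Here mass-subcriticality gives a positive power of $T$ in front of $\|u\|_{L^r L^{p+1}}^{p}$, as in Hornung's $L^2$ theory. A continuity/bootstrap argument keeps this norm small on intervals whose length depends on $K$ only.
\item Then, using Lemma \ref{lem:F2est} with $\|u\|_{L^{p+1}_x}^{p-1}$ placed in $L^{r/(p-1)}_t$, the $W^{1}$-level estimate becomes linear in $\|u\|_{L^rW^{1,p+1}}$. Its coefficient is $T^{\alpha'}\|u\|_{L^rL^{p+1}}^{p-1}$, which is small. Hence the $H^1$ norm propagates with a Grönwall-type multiplicative constant per interval.
\end{itemize}

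\textbf{Step 3 (stochastic part and iteration).} The stochastic convolutions are handled via Corollary \ref{lemma:stochStrich}, with moment estimates on stopped processes. Because the intervals must be random, one restarts at stopping times $\rho_j$. Smallness of the noise contribution is ensured by defining $\rho_{j+1}$ as the first time after $\rho_j$ at which the relevant local Strichartz norm of $u$ exceeds a threshold $\varepsilon(K)$. Using the $k=0$ estimate conditionally on $\mathcal F_{\rho_j}$, one shows $\P(\rho_{j+1}-\rho_j<\delta)$ is small uniformly in $j$. Hence only finitely many restarts occur before $\sigma_K$ a.s., via a Borel--Cantelli / counting argument. On each piece the $H^1$ norm grows by at most a random factor, so it stays finite before $\sigma_K$, and by \eqref{eq:Sobolev_embedding} so does $\|u\|_{L^{p+1}_x}$.

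\textbf{Main obstacle.} I expect the hardest part to be this stochastic restarting. The $k=0$ stochastic term has no pathwise smallness, and the number of restarts must be shown finite. I would try first to bound expected increments of the stopped $L^rL^{p+1}$ norm by $C(K)\delta^{\beta}$ using Corollary \ref{lemma:stochStrich} with $G_2$ estimated through \eqref{eq:G2_lip} at $v=0$, and then apply Markov's inequality.
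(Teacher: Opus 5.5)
\textbf{A genuine gap in Step 3.} Your second tier is sound: you put $\|u\|_{L^{p+1}_x}^{p-1}$ into $L^{r/(p-1)}_t$, which leaves a factor $T^{\theta}$ with $\theta=1-(p+1)/r>0$ precisely because of \eqref{eq:mass_subcrit}. This is what the paper does in Proposition \ref{prop:reduce_to_strichartz}. There it is done in $L^2_\Omega$, on short deterministic intervals, up to a stopping time bounding $\|u\|_{L^r(0,t;L^{p+1}_x)}$, rather than through a pathwise ``random factor per piece''.

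The gap is in the first tier. You want to show that restarts are rare by bounding the expected increments of the stopped $L^rL^{p+1}_x$-norm of $u$ over intervals of length $\delta$ by $C(K)\delta^{\beta}$, i.e.\ that $\P(\rho_{j+1}-\rho_j<\delta)$ is small uniformly in $j$. This fails. At a restart time you only know $\|u(\rho_j)\|_{L^2_x}\le K$, and the free part $S(\cdot-\rho_j)u(\rho_j)$ has no uniformly small Strichartz norm on short intervals. Indeed, for $\psi_\lambda=\lambda^{d/2}\psi(\lambda\,\cdot)$, admissibility gives
$\|S(\cdot)\psi_\lambda\|_{L^r(0,\delta;L^{p+1}_x)}=\|S(\cdot)\psi\|_{L^r(0,\lambda^2\delta;L^{p+1}_x)}$.
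As $\lambda\to\infty$ this tends to $\|S(\cdot)\psi\|_{L^r(0,\infty;L^{p+1}_x)}$, a fixed multiple of $\|\psi\|_{L^2_x}$, while the $L^2_x$-norm is unchanged. Since no $H^1_x$-bound is available at this stage (that is what you want to prove), nothing prevents $u(\rho_j)$ from being concentrated. So with a small threshold $\varepsilon(K)$ the linear flow alone can produce arbitrarily short pieces, and the number of restarts is not controlled. For the same reason, your Step 2 claim that a bootstrap keeps $\|u\|_{L^rL^{p+1}_x}$ \emph{small} on intervals whose length depends only on $K$ is false. At best a bound of order $\gamma K$ holds.

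\textbf{The missing idea.} Choose the threshold strictly above what the linear flow can produce. The paper uses $3\gamma K$ and takes the horizon $T^*$ so small that the deterministic Duhamel term contributes at most $\gamma K$; this is where $T^{\theta}(3\gamma K)^p$ enters. Then every completed piece forces the stochastic convolution over that piece to satisfy $S_k\ge \gamma K$, so only the stochastic term must be shown small in probability. The paper counts pieces by Markov's inequality with $\zeta=2/\theta$. The bound $\|G_2(u)\|_{L^2(\rho_{k-1},\rho_k;L^2_x)}\le M(\rho_k-\rho_{k-1})^{\theta/2}(3\gamma K)^{(p+1)/2}$ then gives $\mathbb{E}[S_k^\zeta]\lesssim_K \mathbb{E}[\rho_k-\rho_{k-1}]$, which telescopes. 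Hence the probability of at least $k_0$ completed pieces is $\lesssim k_0^{-1}$, which even yields moments of order $s<r$ of $\|u\|_{L^r(0,\sigma_K\wedge T;L^{p+1}_x)}$. Once the threshold is corrected, your conditional Borel--Cantelli variant also works: apply the short-interval moment bound only to the stochastic convolution, which gives a conditional probability of a completed piece shorter than $\delta$ of order $\delta^{\zeta\theta/2}$. That yields a.s.\ finiteness of the Strichartz norm, which suffices together with your second tier.
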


The proof proceeds in two steps.
First, we show in Proposition~\ref{prop:reduce_to_L2} that non-blowup in $L^2_x$ implies non-blowup of the $L^r(L^{p+1}_x)$-norm, by providing a new a priori estimate.
Its proof makes efficient use of stopping times to partition the $L^r(L^{p+1}_x)$-norm into equally sized pieces.
This is a generalization and strengthening of the argument used in~\cite[Proposition 4.5]{gnann2024solitary} (which provides almost sure boundedness instead of integrability in $\omega$).

Secondly, we show in Proposition~\ref{prop:reduce_to_strichartz} that non-blowup of the $L^r(L^{p+1}_x)$-norm implies non-blowup of the $H^1_x$-norm.
There, the proof is more straightforward and  akin to a standard Gr\"onwall argument. Together, they imply the main result of the current section.

\begin{proposition}[$L^2$-boundedness implies boundedness in Strichartz spaces]\label{prop:reduce_to_L2}
    Suppose that Assumption~\ref{ass:nonlinear} holds, where $p$ additionally satisfies \eqref{eq:mass_subcrit}
    and let $r$ be such that $(r,p+1)$ is admissible.
    Let $(u,\tau)$ be the maximal, unique local mild solution to \eqref{eq:general_SNLS} provided by Theorem \ref{Thm:WP_H1}.
    For $K \in (0,\infty)$, define the stopping time
    \begin{equation} 
    \label{eq:L2stopping}
        \sigma_K \coloneq \, \inf \cur{t \in [0,\tau) : \norm{u(t)}_{L^2_x} > K} \wedge \tau.
    \end{equation}
    Then for any $T <\infty$ and $s \in (0,r)$, it holds that
    \begin{equation}
        \label{eq:strichartzAPest}
        \EE[\Big]{\norm{u}_{L^r(0,\sigma_K \wedge T;L^{p+1}_x)}^s} < \infty.
    \end{equation}
\end{proposition}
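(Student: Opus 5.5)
We will use the Strichartz estimates at the $L^2_x$ level ($k=0$) on random time intervals, chosen by stopping times so that each interval carries a fixed, small amount of $L^r(L^{p+1}_x)$-norm. Fix a small $\epsilon>0$, to be determined by $\gamma$, $K$ and the constants $C,\tilde{C}_\Psi,\tilde C_\phi$. Recursively set $\rho_0=0$ and
\[
\rho_{j+1}=\inf\{t\in[\rho_j,\sigma_K\wedge T): \|u\|_{L^r(\rho_j,t;L^{p+1}_x)}\ge \epsilon\}\wedge \sigma_K\wedge T .
\]
Then $\|u\|^r_{L^r(0,\sigma_K\wedge T;L^{p+1}_x)}\le \epsilon^r (J+1)$, where $J=\#\{j:\rho_{j+1}<\sigma_K\wedge T\}$ counts the full pieces. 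Hence it suffices to prove $\mathbb E[J^{s/r}]<\infty$ for $s<r$. For instance, a tail bound $\P(J\ge n)\le C_T\, n^{-1}$ or better would suffice, and exponential decay would be best.

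\emph{Key estimate on one piece.} Write the mild formula restarted at $\rho_j$: on $[\rho_j,\rho_{j+1}]$,
\[
u = S(\cdot-\rho_j)u(\rho_j)+\text{deterministic convolution}+\text{stochastic convolution}.
\]
The homogeneous part is bounded in $L^r(L^{p+1}_x)$ by $\gamma K$, by Theorem~\ref{thm:hom_Strichartz} with $k=0$ and $\|u(\rho_j)\|_{L^2_x}\le K$.

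For the drift we use Theorem~\ref{thm:strichartz} with dual pair $(r',(p+1)')$, together with $|F^{(n)}(u)|\lesssim |u|+|u|^p$. The linear part gives $\lesssim K(\rho_{j+1}-\rho_j)$. For the power part, Hölder gives
\[
\||u|^p\|_{L^{r'}(L^{(p+1)'})}\le (\rho_{j+1}-\rho_j)^{\theta}\|u\|_{L^\infty L^2}^{a}\|u\|^{b}_{L^r L^{p+1}},
\]
after interpolating $L^{p+1}$ between $L^2$ and the Strichartz norm. Mass-subcriticality $p<1+4/d$ is exactly what makes $\theta>0$ and $b<r$. This is the usual $L^2$-subcritical argument (Tsutsumi/Cazenave): $b$ corresponds to the number of factors placed in $L^rL^{p+1}$, and the rest are absorbed by the mass.

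For the noise we use Corollary~\ref{lemma:stochStrich} with $k=0$ and $|G(u)|\lesssim |u|+|u|^{(p+1)/2}$. The quadratic variation $\int_{\rho_j}^{\rho_{j+1}}\|u\|^2_{L^2}+\||u|^{(p+1)/2}\|_{L^2}^2\,\d t$ is controlled similarly by $K^2(\rho_{j+1}-\rho_j)+(\rho_{j+1}-\rho_j)^{\theta'}K^{a'}\epsilon^{b'}$, again with $\theta'>0$ by subcriticality.

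Since the piece has norm exactly $\epsilon$ when $\rho_{j+1}<\sigma_K\wedge T$, we obtain on that event
\[
\epsilon\le \gamma K+C(\rho_{j+1}-\rho_j)^{\theta}(1+\epsilon^b)+\|\text{stoch.\ conv.\ on }[\rho_j,\rho_{j+1}]\|_{L^rL^{p+1}}.
\]
This is wrong as stated, because $\gamma K$ need not be smaller than $\epsilon$. So the pieces should instead be measured by a quantity in which the homogeneous part is small. We therefore \textbf{modify}: we do not localize the free evolution. Instead we use that the free evolution is globally bounded by $\gamma K$ in $L^r(\R;L^{p+1})$ but not small. Concretely, we rather choose $\epsilon$ large, $\epsilon=4\gamma K$. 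Then on a full piece the drift plus stochastic contributions must exceed $\epsilon/2$. This forces either $\rho_{j+1}-\rho_j\ge c(K)>0$, which can happen at most $T/c$ times, or the stochastic convolution on the piece exceeds $\epsilon/4$.

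\emph{Main obstacle: counting stochastic excursions.} We bound the number of pieces on which the stochastic convolution restarted at $\rho_j$ is large. By Corollary~\ref{lemma:stochStrich} with conditional expectation given $\mathcal F_{\rho_j}$, with large moment $\zeta$ and the quadratic variation bounded as above (deterministically in terms of $K,\epsilon,T$ thanks to the stopping), we get
\[
\P(\text{piece } j \text{ is stochastic-large}\mid\mathcal F_{\rho_j})\le C\,\mathbb E[(\rho_{j+1}-\rho_j)^{\theta'\zeta/2}\mid\mathcal F_{\rho_j}].
\]
Summing over $j$ with $\sum_j(\rho_{j+1}-\rho_j)\le T$ gives $\mathbb E[\#\text{large pieces}]\lesssim 1$. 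Pushing this to moments $\mathbb E[J^{s/r}]$ for $s<r$ requires a martingale/Borel–Cantelli-type argument on the increments. This is where I expect the real difficulty, and in particular the reason for the restriction $s<r$ rather than all moments. Assembling these bounds yields \eqref{eq:strichartzAPest}.
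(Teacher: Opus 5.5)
Your argument, after the correction you make midway, is essentially the paper's. You split $L^r(L^{p+1}_x)$ into pieces of fixed size, a multiple of $\gamma K$ (the paper uses $3\gamma K$). The free evolution is bounded by $\gamma K$ via $\|u(\rho_j)\|_{L^2_x}\le K$, and the drift is small on short pieces, so the stochastic convolution $S_j$ must be large. You then apply the stochastic Strichartz estimate to $\mathbf{1}_{(\rho_j,\rho_{j+1}]}G(u)$ with $\zeta\ge 2/\theta$, which makes $\mathbb{E}[S_j^\zeta]\lesssim \mathbb{E}[\rho_{j+1}-\rho_j]$ summable by telescoping.

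Two small points on the drift. No interpolation with the mass is needed: each piece has $L^r(L^{p+1}_x)$-norm at most $\epsilon$, so H\"older in time gives $(\rho_{j+1}-\rho_j)^{\theta}\epsilon^p$ with $\theta=1-(p+1)/r>0$, and this is exactly where $p<1+4/d$ enters. Also, the linear part of $F^{(n)}$ must go into $L^1(L^2_x)$, as your bound $K(\rho_{j+1}-\rho_j)$ implicitly does, not into $L^{r'}(L^{(p+1)'}_x)$.

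The one structural difference is how you treat long pieces. You count them ($\le T/c$), whereas the paper shrinks the horizon to $T^*$ so that every full piece is stochastic-large, and then iterates over $\lceil T/T^*\rceil$ intervals. Both work, and yours avoids the iteration.

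Where you go astray is the last paragraph. The step you flag as the ``real difficulty'' is immediate, and it needs neither a martingale or Borel--Cantelli argument nor conditioning on $\mathcal F_{\rho_j}$. You have already shown that every full piece is either long or stochastic-large, so $J\le T/c+N_{\mathrm{large}}$ pointwise. Plain Chebyshev then gives
$\mathbb{E}[N_{\mathrm{large}}]=\sum_j\mathbb{P}(S_j>\epsilon/4)\lesssim\sum_j\mathbb{E}[\rho_{j+1}-\rho_j]\le T$.
Hence $\mathbb{E}[J]<\infty$, and $\mathbb{E}[J^{s/r}]\le 1+\mathbb{E}[J]$ for all $s\le r$, which finishes the proof.

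The paper does exactly this through Markov's inequality, $\mathbb{P}(J\ge k_0)\le k_0^{-1}(\gamma K)^{-\zeta}\sum_k\mathbb{E}[S_k^\zeta]$, followed by the layer-cake formula. The restriction $s<r$ comes only from phrasing the conclusion as this weak-type tail bound, not from a genuine obstruction; your counting bound even gives $s=r$. So the proposal is correct in substance, but as written it stops short because it mistakes an elementary step for the crux.
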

\begin{proof}
    We observe that to show~\eqref{eq:strichartzAPest} it suffices to find $T^* > 0$ such that the following tail bound holds uniformly in $\lambda$:
    \begin{equation}\label{eq:Strichartz_tailbound}
    \PP[\Big]{ \norm{u}_{L^r(0,\sigma_{K} \wedge T^*;L^{p+1}_x)}^r \,>\, \lambda }\,\lesssim \,\lambda^{-1}.
    \end{equation}
    Indeed, given $s \in (0,r)$ the layer cake representation gives
    \begin{align*}
          \EE[\Big]{\norm{u}_{L^r(0,\sigma_{K} \wedge T^*;L^{p+1}_x)}^s} \,=\, s/r \int_0^\infty
          \lambda^{s/r-1} 
          \PP[\Big]{ \norm{u}_{L^r(0,\sigma_{K} \wedge T^*;L^{p+1}_x)}^r \,>\, \lambda }\,\d \lambda < \infty,
    \end{align*}
    since $s / r < 1$.
    Iterating this estimate on the time intervals 
    $[\sigma_K\wedge  (l-1)T^*, \sigma_K\wedge  l T^* ]$ for $l = 1 \ldots \lceil T / T^* \rceil$, it follows that~\eqref{eq:strichartzAPest} holds for arbitrary $T > 0$.
    
    To verify \eqref{eq:Strichartz_tailbound} we first observe that the pair $(2+4/d, 2+4/d)$ is admissible.
    Since $(r,p+1)$ is admissible by assumption and $p+1 < 2+4/d$ by~\eqref{eq:mass_subcrit}, it then follows from~\eqref{eq:admissible} that $r > p+1$.
    As a consequence, after defining
    \begin{equation}\label{Eq103}
        \theta \coloneq 1/r' \, - \,  p/r\, = 1 - (p+1)/r,
    \end{equation}
    we find that $\theta \in (0,1)$.
    For fixed $T > 0$, we now define a recursive sequence of stopping times starting from $\rho_0 = 0$ by
    \begin{align*}
        \rho_k \,\coloneq\, \sup \cur[\Big]{t \in [\rho_{k-1},\sigma_{K} \wedge T] : \norm{u}^r_{L^{r}(\rho_{k-1},t;L^{p+1}_x)} \leq (3\gamma K)^r }, \qquad k \in \bbN,
    \end{align*}
    where $\gamma<\infty$ is the constant from Theorems \ref{thm:hom_Strichartz}--\ref{thm:stoch_Strichartz}.
    Fixing now $k_0 \in \bbN$, we see that on the event 
    \begin{equation*}
    A_{k_0} = \cur[\Big]{\norm{u}_{L^r(0,\sigma_{K} \wedge T;L^{p+1}_x)}^r \geq k_0 (3\gamma K)^r}
    \end{equation*}
     we have  $\rho_k > \rho_{k-1}$ and $\norm{u}_{L^r(\rho_{k-1},\rho_k;L^{p+1}_x)} = 3\gamma K$ for every $k \in \cur{1, \ldots, k_0}$, as follows from the definition of $\rho_k$ and the dominated convergence theorem.
    For such $k$ it then also holds $\bbP$-a.s.\ that for all $t \in [\rho_{k-1},\rho_{k}]$:
    \begin{equation}\label{eq:mildstoppingblowup}
		u(t) = S(t - \rho_{k-1})u(\rho_{k-1}) + \int_{\rho_{k-1}}^t S(t-t')F(\cdot, u(t')) \d t' + \int_{\rho_{k-1}}^t S(t-t')G(u(t')) \d W(t').
	\end{equation}
    We will use this information to deduce an upper bound on the probability that the event $A_{k_0}$ occurs.
    To do this, we first estimate the middle term of~\eqref{eq:mildstoppingblowup} as:
    \begin{align*}
        & \norm[\Big]{t \mapsto \int_{\rho_{k-1}}^{t} S(\cdot -t')F(\cdot, u(t')) \d t'}_{L^r(\rho_{k-1},\rho_{k};L^{p+1}_x)} \\
        &\quad\leq \gamma \tilde{C}_{\Psi} \sum_{n=1}^N \bra[\Big]{ \norm{F_1^{(n)}(u)}_{L^1(\rho_{k-1},\rho_k;L^2_x)} + \norm{F_2^{(n)}(u)}_{L^{r'}(\rho_{k-1},\rho_{k};L^{1+1/p}_x)}} \\
        &\quad\leq \gamma \tilde{C}_{\Psi} N M \bigl( \norm{u}_{L^1(\rho_{k-1},\rho_k;L^2_x)} + \norm{\norm{u}^{p}_{L^{p+1}_x}}_{L^{r'}(\rho_{k-1},\rho_{k})} \bigr)\\
        &\quad\leq \gamma \tilde{C}_{\Psi} N M \bigl(T \norm{u}_{L^{\infty}(\rho_{k-1},\rho_k;L^2_x)} + T^{\theta}\norm{\norm{u}^{p}_{L^{p+1}_x}}_{L^{r/p}(\rho_{k-1},\rho_{k})} \bigr)\\
        &\quad\leq \gamma \tilde{C}_{\Psi} N M \bigl(T K + T^{\theta}(3\gamma K)^p\bigr),
    \end{align*}
    where we have used the decompositions~\eqref{eqn_f_tilda_equals} and~\eqref{Eq29} together with Corollary \ref{cor:mult_strichartz}, the estimates~\eqref{eq:Lipschtiz_est_F1} and~\eqref{eq:F2_lip} with $v = 0$, H\"older's inequality with exponents given by~\eqref{Eq103}, and finally the definition of $\sigma_K$ and $\rho_k$. The occurring $\Psi$-dependent constant is moreover the one of \eqref{constants_dep_on_psi}.
    Thus, taking the $L^r(L^{p+1}_x)$-norm of~\eqref{eq:mildstoppingblowup} and using the above estimate shows that, on the set $A_{k_0}$ and for all $k \in \cur{1, \ldots, k_0}$, we have:
    \begin{equation}
    \label{eq:L2blowupstrichest_new}
        3\gamma K\, \leq \, \gamma \norm{u(\rho_{k-1})}_{L^2_x}
        \,+\, \gamma \tilde{C}_{\Psi} N M \bigl(T K + T^{\theta}(3\gamma K)^p \bigr)+ S_k,
    \end{equation}
    where we used Theorem \ref{thm:hom_Strichartz} and abbreviated
    \begin{align*}
        S_{k} &= \norm[\Big]{\int_{\rho_{k-1}}^{\cdot} S({\cdot}-t')G(u(t'))\d W(t')}_{L^r(\rho_{k-1},\rho_{k};L^{p+1}_x)}, \qquad k \in \bbN.
    \end{align*}
    Thus, choosing $T=T^*$ small enough (based on $\gamma$, $\tilde{C}_{\Psi}$, $N$, $M$, $K$, $p$) and noting that $\norm{u(\rho_{k-1})}_{L^2_x} \leq K$, it follows from \eqref{eq:L2blowupstrichest_new} that
    \begin{equation}
    \label{eq:L2blowupPPest_new}
    \begin{aligned}
        &\PP[\Big]{\norm{u}_{L^r(0,\sigma_K \wedge T^*;L^{p+1}_x)}^r \geq k_0 (3\gamma K)^r} \,=\, \PP{A_{k_0}}
        \,\leq\, \PP[\big]{S_k \geq \gamma K \text{ for all } k \in \cur{1, \ldots, k_0}} \\
        &\qquad\leq \, \PP[\bigg]{\sum_{k=1}^{\infty} S_k^{\zeta} \geq k_0(\gamma K)^{\zeta}} 
        \,\leq \, k^{-1}_0\cdot \biggl( (\gamma K)^{-\zeta}\sum_{k=1}^{\infty}\EE{S^{\zeta}_k}\biggr),
    \end{aligned}
    \end{equation}
    for any $\zeta > 0$ by the Chebyshev--Markov inequality.

    It now only remains to bound $S_k$ appropriately.
    Similarly to how we bounded $F$, we first deduce from the estimates~\eqref{eq:Lipschtiz_est_G1} and~\eqref{eq:G2_lip} (with $v = 0$) and the definition of $\theta$, $\sigma_K$ and $\rho_k$ that
    \begin{align*}
        \norm{G_1(u)}_{L^2(\rho_{k-1},\rho_{k};L^2_x)}\,&\le\, (\rho_{k} \,-\,\rho_{k-1})^{1/2} M K,
        \\
        \norm{G_2(u)}_{L^2(\rho_{k-1},\rho_{k};L^2_x)} &\le\, (\rho_{k} \,-\,\rho_{k-1})^{\theta/2} M (3\gamma K)^{(p+1)/2}.
    \end{align*}
    Hence, taking $\zeta = 2/\theta$ (note that $\zeta \in (2,\infty)$ since $\theta \in (0,1)$) and using the stochastic Strichartz estimate from Corollary \ref{lemma:stochStrich}, we finally see that
    \begin{align*}
        \EE{S_k^\zeta} \, &\leq \,(2\gamma \sqrt{\zeta} \tilde{C}_\phi)^\zeta \Bigl( \EE{\norm{G_1(u)}^\zeta_{L^2(\rho_{k-1},\rho_{k};L^2_x)}} \,
        + \,\EE{\norm{G_2(u)}^\zeta_{L^2(\rho_{k-1},\rho_{k};L^2_x)}} \Bigr) \\
        &\leq\, (2\gamma \sqrt{\zeta} \tilde{C}_\phi)^\zeta \Bigl((MK)^{\zeta} {(T^*)}^{\zeta/2 - 1} + M^\zeta (3\gamma K)^{\zeta(p+1)/2} \Bigr)\EE[\big]{(\rho_{k} - \rho_{k-1})},
    \end{align*}
    where we recall the notation $\tilde{C}_\phi$ was defined in \eqref{constants_dep_on_phi}.
    Since $\rho_k \leq T^*$ by definition, it follows from a telescoping series argument that $\EE{S_k^\zeta}$ is summable in $k$.
    Hence, the infinite sum in~\eqref{eq:L2blowupPPest_new} converges to a value independent of $k_0$, so that~\eqref{eq:Strichartz_tailbound} holds.
\end{proof}

\begin{proposition}[Boundedness in Strichartz spaces implies $H^1_x$-boundedness]
\label{prop:reduce_to_strichartz}
In the setting of Proposition~\ref{prop:reduce_to_L2}, define for $K \in (0,\infty)$ the stopping time
\begin{equation}
    \tilde{\sigma}_K = \sup \cur[\big]{t \in [0,\tau) : \norm{u}_{L^r(0,t;L^{p+1}_x)} \leq K}.
\end{equation}
Then there exist constants $L_K, G_K<\infty$, such that the following estimate holds for all $T <\infty$:
\begin{equation}\label{Eq107}
\EE[\bigg]{\sup_{t \in [0,\tilde{\sigma}_K \wedge T) }\norm{u(t)}_{H_x^1}^2} \,\le\, L_K e^{G_K T} \EE[\big]{\|u_0 \|_{H_x^1}^2}.
\end{equation}

\end{proposition}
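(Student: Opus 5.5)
The plan is to partition $[0,\tilde\sigma_K\wedge T)$ into random subintervals on which the $L^r(L^{p+1}_x)$-norm of $u$ is small, and to run a stopped Strichartz estimate on each piece. The key point is that the estimates \eqref{eq:F2_bound} and \eqref{eq:G2_bound} on $F_2^{(n)}$ and $G_2$ are \emph{linear} in $\norm{u}_{W^{1,p+1}_x}$, with coefficient $\norm{u}_{L^{p+1}_x}^{p-1}$ or $\norm{u}_{L^{p+1}_x}^{(p-1)/2}$. After H\"older in time, this coefficient becomes $\norm{u}_{L^r(I;L^{p+1}_x)}^{p-1}$ times a positive power of $\abs{I}$, or the $G_2$-analogue. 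On intervals where this quantity is small, the nonlinear terms can therefore be absorbed into the left-hand side. This is a linear Gr\"onwall-type structure, which is why the bound \eqref{Eq107} is linear in $\EE{\norm{u_0}_{H^1_x}^2}$.

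\emph{Step 1 (small deterministic time step).} Fix $\delta\in(0,1]$ and consider stopping times $\rho\le\rho'\le(\rho+\delta)\wedge\tilde\sigma_K\wedge T$. Set $Y_{\rho,\rho'}=L^\infty(\rho,\rho';H^1_x)\cap L^r(\rho,\rho';W^{1,p+1}_x)$. I start from the mild formula restarted at $\rho$, as in \eqref{eq:mildstoppingblowup}, and apply Theorem \ref{thm:hom_Strichartz} and Corollaries \ref{lemma:stochStrich}--\ref{cor:mult_strichartz} with $\zeta=2$ and the admissible pairs $(\infty,2)$ and $(r,p+1)$.

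For the Lipschitz parts I use \eqref{Eq25a} and \eqref{Eq26a}, which give contributions bounded by $\delta$ or $\delta^{1/2}$ times $\norm{u}_{L^\infty H^1}$. For the power parts I use H\"older in time exactly as in the derivation of \eqref{Eq27}, but without the truncation $\Phi_R$. This gives
\[
\norm{F_2^{(n)}(u)}_{L^{r'}(\rho,\rho';W^{1,1+1/p}_x)}\lesssim \norm{u}_{L^r(\rho,\rho';L^{p+1}_x)}^{p-1}\,\delta^{\theta}\,\norm{u}_{L^r(\rho,\rho';W^{1,p+1}_x)},
\]
with $\theta=1-(p+1)/r$ as in \eqref{Eq103}. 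The analogous bound for $G_2$ carries the factor $\delta^{\theta/2}$ and the exponent $(p-1)/2$. Since $\norm{u}_{L^r(0,\tilde\sigma_K;L^{p+1}_x)}\le K$, the coefficients are bounded by $K^{p-1}\delta^\theta$ and $K^{(p-1)/2}\delta^{\theta/2}$. Choosing $\delta=\delta_K$ small, I obtain
\[
\EE{\norm{u}_{Y_{\rho,\rho'}}^2}\le 4\gamma^2\,\EE{\norm{u(\rho)}_{H^1_x}^2}+\tfrac14\,\EE{\norm{u}_{Y_{\rho,\rho'}}^2}.
\]
To make the absorption legitimate, I would first localize further with stopping times that cap the $Y$-norm, which keeps everything finite, and then remove the cap by monotone convergence. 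This is possible because $u\in C([0,\tau);H^1_x)\cap L^r_{\mathrm{loc}}W^{1,p+1}_x$ by Theorem \ref{Thm:WP_H1}. The result is $\EE{\norm{u}_{Y_{\rho,\rho'}}^2}\le L\,\EE{\norm{u(\rho)}_{H^1_x}^2}$ with $L=L_K$ depending only on $K$ and the constants of the problem.

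\emph{Step 2 (iteration).} I apply Step 1 on the consecutive deterministic-length pieces $[\tilde\sigma_K\wedge T\wedge (j-1)\delta,\ \tilde\sigma_K\wedge T\wedge j\delta]$ for $j=1,\dots,\lceil T/\delta\rceil$. Chaining the estimates gives the bound $L^{\lceil T/\delta\rceil}\,\EE{\norm{u_0}_{H^1_x}^2}\le L_K e^{G_K T}\,\EE{\norm{u_0}_{H^1_x}^2}$ with $G_K=\log(L)/\delta$. Because $u$ is continuous in $H^1_x$ on $[0,\tau)$, the supremum over $[0,\tilde\sigma_K\wedge T)$ is controlled by these $L^\infty$ bounds.

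\emph{Main obstacle.} The delicate point is the absorption in Step 1. It requires a priori finiteness of $\EE{\norm{u}_{Y}^2}$ on stopped intervals, and the stopping times must be compatible with the stochastic Strichartz estimate on random intervals. For the latter I would use the standard trick of writing the integrand as $\mathbf 1_{[\rho,\rho']}G(u)$ and starting the convolution at $\rho$, as was already done in the proof of Proposition \ref{prop:reduce_to_L2}. A secondary subtlety is the measurability of the $L^\infty(H^1_x)$-norm, but this is handled by the continuity of $u$ in $H^1_x$.
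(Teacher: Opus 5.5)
Your proposal is correct and follows essentially the same route as the paper. Both arguments localize with a stopping time capping the $C(H^1_x)\cap L^r(W^{1,p+1}_x)$-norm (removed at the end by Fatou/monotone convergence), then apply the Strichartz estimates together with \eqref{eq:F2_bound}, \eqref{eq:G2_bound} and H\"older in time to gain the factors $K^{p-1}T^{\theta}$ and $K^{(p-1)/2}T^{\theta/2}$, absorb for small $T^*$, and iterate over the pieces $[\tilde\sigma_K\wedge(l-1)T^*,\tilde\sigma_K\wedge lT^*]$. Your opening sentence about random subintervals with small $L^r(L^{p+1}_x)$-norm does not describe what you actually do: in Step 1 the smallness comes from $\delta^\theta$ combined with the global bound $K$, exactly as in the paper.
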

\begin{proof}
Fix $K ,T <\infty$. In the following, we replace $\tilde{\sigma}_K$ by
\[
\tilde{\sigma}_{K} \wedge \sup\Bigl\{
t \in [0,\tilde \sigma_K] \Big| \|u\|_{C([0,t];H^1_x)} \,+\,
        \|u\|_{L^r(0, t ; W^{1,p+1}_x) } \le R
\Bigr\}
\]
for $R<\infty$ and remark that at the end, one can obtain the same assertion for the original $\tilde{\sigma}_K$ by Fatou's lemma.

Then, $u$ is defined on $[0,\tilde{\sigma}_K \wedge T]$ and satisfies the mild formulation~\eqref{eq:mild_solution_SNLS}.
    Employing the deterministic and stochastic Strichartz estimates from 
   Theorem \ref{thm:hom_Strichartz} and Corollaries \ref{lemma:stochStrich}--\ref{cor:mult_strichartz}, together with the decompositions~\eqref{eqn_f_tilda_equals} and~\eqref{Eq29}, we find that
    \begin{equation}
    \label{eq:H1blowupBigest}
    \begin{aligned}
        &\|u\|_{L^2_{\mathcal P}(C([0,\tilde{\sigma}_K \wedge T];H^1_x))} \,+\,
        \|u\|_{L^2_{\mathcal P}(L^r(0, \tilde{\sigma}_K \wedge T ; W^{1,p+1}_x) )}
        \\&\quad 
        \le 2\gamma\Bigl(
        \|u_0\|_{L^2_{\Omega} ( H^1_x)} \,+\, C_{\Psi}\sum_{n=1}^N \bra[\big]{ \|F_1^{(n)}(u)\|_{L^2_{\mathcal P}(L^1(0, \tilde{\sigma}_K \wedge T ; H^1_x) )} \,+\, \|F_2^{(n)}(u)\|_{L^2_{\mathcal P}(L^{r'}(0, \tilde{\sigma}_K \wedge T ; W^{1,1+1/p}_x) )}}
        \Bigr) \\
        &\qquad +\, \sqrt{8}\gamma C_{\phi} 
        \Bigl(\|G_1(u)\|_{L^2_{\mathcal P}(L^2(0, \tilde{\sigma}_K \wedge T ; H^1_x) )} \,+\, \|G_2(u)\|_{L^2_{\mathcal P}(L^{2}(0, \tilde{\sigma}_K \wedge T; H^{1}_x) )}
        \Bigr).
    \end{aligned}
    \end{equation}
    We proceed to bound two of the terms on the right-hand side by 
    \begin{align*}
        \|F_1^{(n)}(u)\|_{L^1(0, \tilde{\sigma}_K \wedge T ; H^1_x) }\,&\le\,  M\|u\|_{L^1(0, \tilde{\sigma}_K \wedge T ; H^1_x) }\,\le\, MT\|u\|_{L^\infty(0, \tilde{\sigma}_K \wedge T ; H^1_x) },
        \\
        \|G_1(u)\|_{L^2(0, \tilde{\sigma}_K \wedge T ; H^1_x) } \,&\le\, 
        M \|u\|_{L^2(0, \tilde{\sigma}_K \wedge T ; H^1_x) } \,\le\, M \sqrt{T}\|u\|_{L^\infty(0, \tilde{\sigma}_K \wedge T ; H^1_x) } ,
        \end{align*}
    using \eqref{eq:bddness_F1} and \eqref{eq:bddness_G1}.
    Applying  \eqref{eq:F2_bound} and \eqref{eq:G2_bound} and then H\"older's inequality while recalling the definition \eqref{Eq103} of $\theta$  we bound additionally
    \begin{align*}
        \|F_2^{(n)}(u)\|_{L^{r'}(0, \tilde{\sigma}_K \wedge T ; W^{1,1+1/p}_x) }\,&\le\, 
        M T^\theta \|u\|_{L^r(0, \tilde{\sigma}_K \wedge T ; L^{p+1}_x)}^{p-1}
        \|u\|_{L^r(0,\tilde{\sigma}_K \wedge T ; W^{1,p+1}_x)}
        \\
        &\le\, 
        M K^{p-1} T^\theta
        \|u\|_{L^r(0, \tilde{\sigma}_K \wedge T ; W^{1,p+1}_x)},
        \\
         \|G_2(u)\|_{L^2(0, \tilde{\sigma}_K \wedge T ; H^1_x) }\,&\le\, 
        M T^{\theta/2} \|u\|_{L^r(0, \tilde{\sigma}_K \wedge T ; L^{p+1}_x)}^{(p-1)/2}
        \|u\|_{L^r(0, \tilde{\sigma}_K \wedge T ; W^{1,p+1}_x)}
        \\
        &\le\,   M K^{(p-1)/2}T^{\theta/2} 
        \|u\|_{L^r(0, \tilde{\sigma}_K \wedge T ; W^{1,p+1}_x)}.
    \end{align*}
    Thus, taking $T^*$ small enough (based on $\gamma$, $p$, $M$, $N$, $K$, $C_{\phi}$, $C_{\Psi}$),
    we may use the above estimates to absorb every term of the right-hand side of~\eqref{eq:H1blowupBigest} except the first and find
    \[
        \|u\|_{L^2_{\mathcal P}(C([0,\tilde{\sigma}_K \wedge T^*]; H^1_x) )}
    \,\le\, 4\gamma \|u_0\|_{L^2_\Omega ( H^1_x) }.
    \]
    Iterating this estimate on the time intervals 
    $[\tilde{\sigma}_K\wedge  (l-1)T^*, \tilde{\sigma}_K\wedge  l T^* ]$ for $l = 1 \ldots \lceil T / T^* \rceil$, we obtain the stated estimate.
\end{proof}
Using the estimates provided by Propositions~\ref{prop:reduce_to_L2} and~\ref{prop:reduce_to_strichartz}, we can prove Proposition~\ref{prop:global} straightforwardly.
\begin{proof}[Proof of Proposition~\ref{prop:global}]
    For $R > 0$, consider the events $A = \cur{ \tau < \infty}$ and $B_R =\{ \|u_0 \|_{H^1_x} \le  R \}\in  \cF_0$. 
    By the blow-up condition \eqref{eq:blow_up}, it holds $\P$-a.s.\ on the event $A$ that $\sup_{t \in [0,\tau)}\norm{u(t)}_{L^{p+1}_x} = \infty$, which by Sobolev embedding \eqref{eq:Sobolev_embedding} implies $\sup_{t \in [0,\tau)}\norm{u(t)}_{H^1_x} = \infty$.
    From Proposition~\ref{prop:reduce_to_strichartz} it follows that on the event $A \cap B_R$ it holds $\P$-a.s.\ that $\tau > \tilde{\sigma}_K$ for any $K > 0$, and consequently  $\sup_{t \in [0,\tau)}\norm{u}_{L^r(0,t;L^{p+1}_x)} = \infty$.
    Similarly, we deduce from Proposition~\ref{prop:reduce_to_L2} that on the event $A \cap B_R$ it holds  $\tau > \sigma_K$, $\P$-a.s., for every $K > 0$, and therefore $\sup_{t \in [0,\tau)} \norm{u(t)}_{L^2_x} = \infty$.
    Overall, we find for any $R > 0$ that
    \begin{equation*}
        \PP[\Big]{\tau < \infty,\, \sup_{t \in [0,\tau)} \norm{u(t)}_{L^2_x} < \infty,\, B_R} = 0.
    \end{equation*}
    Taking $R \to \infty$ while using $\lim_{R \to \infty}\PP{B_R} = 1$, the result follows.
\end{proof}

\section{Conservation laws}\label{Sec_quant}

Theorem \ref{Thm:WP_H1}  provides  the assertions of Theorem \ref{thm_intro} regarding the local well-posedness of \eqref{Eq100}. Indeed, following the comments at the beginning of Section \ref{sec_local}, its It\^o formulation \eqref{eq:snls_ito} takes precisely the form \eqref{eq:general_SNLS} and the regularity imposed on the noise coefficients in \eqref{noise_reg_intro}--\eqref{noise_reg_intro_extra} guarantees precisely the one of \eqref{eq:psi_correction} required in Assumption \ref{ass:nonlinear}.  
In the current section, we provide the final ingredients to conclude together with Proposition \ref{prop:global} the global well-posedness of \eqref{Eq100} in the special situations \ref{item_A} and \ref{item_B}. We remark that in both, the noise coefficients $(\phi_k)_{k\in\N} $ are assumed to be real, and in particular
\begin{equation}
    \Psi = -\tfrac{1}{2}\sum_{k \in \bbN} \phi_k^2.
\end{equation}
Moreover, by the blow-up criteria from Theorem \ref{Thm:WP_H1} and Proposition \ref{prop:global}, this amounts to obtain upper bounds on the $L^{p+1}_x$-norm and the $L^2_x$-norm of solutions to \eqref{Eq100}, respectively. The latter are usually obtained through stochastic variants of the conservation laws for  mass and energy \eqref{eqn_conserved_qtts} of the deterministic Schr\"odinger equation. Since such stochastic versions have been established in several preceding works, see for instance \cite{DeBouard_Debussche_H1,gnann2024solitary,hornung_SNLS}, we keep their presentation brief. 

Firstly, a formal application of It\^o's formula shows that also in the stochastic case the mass is conserved along the trajectories of the solution. This can, e.g.,  be justified  using that up to stopping times approaching its maximal time of existence, $u$ is a solution to the truncated equation \eqref{eq:nlsmildtruncated} and then applying the mild It\^o formula \cite[Theorem 1]{mild}  to the latter, as is done in \cite[Proposition 4.6]{gnann2024solitary} for parametrically forced stochastic nonlinear Schr\"odinger equations. The result is the following.

\begin{proposition}[Conservation of mass]\label{prop:mass_cons}
Suppose that the assumptions of Theorem \ref{thm_intro} hold and that $(\phi_k)_{k\in \N}$  is real-valued. Then, the maximal unique local solution to \eqref{Eq100} satisfies 
	\[
	\|u(t)\|_{L^2_x}^2 \,=\, \|u_0\|_{L^2_x}^2,\qquad t\in [0,\tau),
	\]
	$\P$-almost surely. 
\end{proposition}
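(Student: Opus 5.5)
We would prove the statement by reducing to the truncated equation and applying the mild It\^o formula to the squared $L^2_x$-norm. Let $(u,\tau)$ be the maximal solution from Theorem \ref{Thm:WP_H1}, and let $\tau_R$ be the stopping times from \eqref{eq:deftauR}. We may assume $u_0\in L^2_\Omega(H^1_x)$; this is allowed by the change of measure in Step 1 of the proof of Theorem \ref{Thm:WP_H1}, and the claimed identity is pathwise, hence invariant under this change. On $[0,\tau_R]$ the solution $u$ coincides with $u_R$, which solves \eqref{eq:nlsmildtruncated}. Since $\bigcup_R[0,\tau_R]=[0,\tau)$ by \eqref{Eq32}, it suffices to prove $\|u_R(t\wedge\tau_R)\|_{L^2_x}^2=\|u_0\|_{L^2_x}^2$ for all $t$, almost surely.

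For fixed $R$, $u_R$ is a mild solution of a linear Schr\"odinger equation with drift $f=\Psi_n$-weighted terms and diffusion $g=G_1(u_R)+G_2(\Phi_R(u_R))$. Before $\tau_R$, $\Phi_R(u_R)=u_R$, so these reduce to $f=i\lambda|u|^{p-1}u+\Psi|u|^{q-1}u$ and $g_k=i|u|^{(q-1)/2}u\,\phi_k$, with $\Psi=-\tfrac12\sum_k\phi_k^2$. The integrability needed for the mild It\^o formula \cite[Theorem 1]{mild} with the $C^2$ functional $\|\cdot\|_{L^2_x}^2$ on $L^2_x$ comes from Proposition \ref{prop:trunc_ex}. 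Specifically, $f\in L^{r'}(0,T;L^{1+1/p}_x)+L^1(0,T;L^2_x)$ and $g\in L^2_{\mathcal P}(L^2(0,T;\ell^2(H^1_x)))$. The formula gives
\[
\|u(t)\|_{L^2_x}^2=\|u_0\|_{L^2_x}^2+\int_0^t\Bigl(2\,\mathrm{Re}\langle u,f\rangle+\sum_k\|g_k\|_{L^2_x}^2\Bigr)\dt'+2\sum_k\int_0^t\mathrm{Re}\langle u,g_k\rangle\,\d\beta_k .
\]
The Laplacian term is absent because $S(t)$ is unitary; handling this term is exactly the point of using the mild formula.

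We then check the cancellations pointwise for real $\phi_k$. First, $\mathrm{Re}\langle u,i\lambda|u|^{p-1}u\rangle=0$. Second, $\mathrm{Re}\langle u,i|u|^{(q-1)/2}u\phi_k\rangle=\mathrm{Re}\bigl(i\int|u|^{(q+3)/2}\phi_k\,\dx\bigr)=0$, so the martingale term vanishes. Third, the correction $2\,\mathrm{Re}\langle u,\Psi|u|^{q-1}u\rangle=-\int\sum_k\phi_k^2|u|^{q+1}\,\dx$ cancels $\sum_k\|g_k\|_{L^2_x}^2=\int\sum_k\phi_k^2|u|^{q+1}\,\dx$. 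Each integral is finite because $u\in H^1_x\hookrightarrow L^{q+1}_x$ and $\phi\in L^\infty(\ell^2)$. Applying this with $t\wedge\tau_R$ and letting $R\to\infty$ gives conservation on $[0,\tau)$.

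The main obstacle is justifying the mild It\^o formula in the low-regularity setting. Since $u\notin D(\Delta)$ and the drift lies only in $L^{1+1/p}_x$, one has to make sense of the pairing $\langle u,f\rangle$ as $L^{p+1}$--$L^{1+1/p}$ duality and verify the hypotheses of \cite{mild}. If this fails, we would fall back on regularization: apply the standard It\^o formula to a Yosida-type mollification $J_\varepsilon u$, with $J_\varepsilon=(1-\varepsilon\Delta)^{-1}$, which commutes with $S(t)$ and makes the Laplacian term vanish exactly. We would then pass to the limit $\varepsilon\to0$ using the Strichartz integrability $u\in L^r(L^{p+1}_x)$, as in \cite[Proposition 4.6]{gnann2024solitary}.
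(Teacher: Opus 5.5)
Your proposal is correct and follows the route the paper sketches. You localize via the stopping times $\tau_R$ to the truncated equation \eqref{eq:nlsmildtruncated}, apply the mild It\^o formula \cite[Theorem 1]{mild} to $\|\cdot\|_{L^2_x}^2$ as in \cite[Proposition 4.6]{gnann2024solitary}, and use the unitarity of $S(t)$ to remove the Laplacian term; your explicit cancellations for real $\phi_k$ (the vanishing drift pairing with $i\lambda|u|^{p-1}u$, the vanishing martingale integrand, and the correction $\Psi=-\tfrac12\sum_k\phi_k^2$ cancelling $\sum_k\|g_k\|_{L^2_x}^2$) supply exactly the details the paper leaves to the reader.
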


While the above suffices to deduce global well-posedness in the mass-subcritical regime, i.e., under the assumption \ref{item_B}, in which Proposition \ref{prop:global} applies, an estimate on the energy 
\begin{align}\label{eqn_energy_def}
E(u)\, = \,\int_{\R^d} \frac{|\nabla u|^2}{2}  - \frac{\lambda |u|^{p+1} }{p+1}\, \d x,
\end{align}
is needed otherwise. Regarding this, we restrict ourselves to the case of linear noise, i.e., we impose that $q=1$, since otherwise the It\^o expansion of $E$ does at least not straightforwardly lead to a closed a priori estimate. Then, since the local $H^1_x$-valued solution to \eqref{Eq100} from Theorem \ref{Thm:WP_H1} has the same path-regularity as the ones constructed in \cite[Theorem 4.1]{DeBouard_Debussche_H1}, the following properties are derived in the same way as in \cite[Section 4.2]{DeBouard_Debussche_H1}: Firstly, a regularization procedure allows to justify It\^o's formula to compute the evolution of $E(u)$ stated in \cite[Proposition 4.5]{DeBouard_Debussche_H1}, in which $\overline{u}$ denotes the complex conjugate of $u$.

\begin{proposition}
Suppose that the assumptions of Theorem \ref{thm_intro} hold, that $(\phi_k)_{k\in \N}$  are real-valued and that the noise is linear multiplicative, i.e., that $q=1$. Then, the maximal unique local solution to \eqref{Eq100} satisfies 
	\[
	E(u(t)) \,=\, E(u_0)\,+\frac{1}2 \sum_{k\in \N} \int_0^t \| u \nabla \phi_k \|_{L^2_x}^2\,  \d s\, +\, \Imag \sum_{k\in \N}
     \int_0^t  \int_{\R^d} \overline{u} \nabla u \cdot \nabla \phi_k \, \d x \, \d \beta_k(s)
    ,\qquad t\in [0,\tau),
	\]
	$\P$-almost surely. 
\end{proposition}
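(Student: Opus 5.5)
The plan is to derive the identity formally from Itô's formula applied to $E(u)$, and then justify it by regularization. This is the route of \cite[Proposition 4.5]{DeBouard_Debussche_H1}.

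\emph{Formal computation.} With $q=1$ and real $\phi_k$, the Itô form \eqref{eq:snls_ito} reads
\[
\d u = i[\Delta u+\lambda|u|^{p-1}u]\,\d t -\tfrac12\textstyle\sum_k\phi_k^2 u\,\d t + i u\,\d W .
\]
The Fréchet derivative of $E$ is $E'(u)h=\mathrm{Re}\int(-\Delta\bar u-\lambda|u|^{p-1}\bar u)h\,\d x$.
\begin{itemize}
\item The Hamiltonian drift $i E'(u)$-direction contributes $\mathrm{Re}\int \overline{(-\Delta u-\lambda|u|^{p-1}u)}\, i(\Delta u+\lambda|u|^{p-1}u)\,\d x=0$.
\item In the martingale part the potential term vanishes, since $\mathrm{Re}(|u|^{p-1}\bar u\, i u\phi_k)=0$. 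The gradient part gives $\mathrm{Re}\int\nabla\bar u\cdot\nabla(iu\phi_k)=\mathrm{Re}\int i u\,\nabla\bar u\cdot\nabla\phi_k=\Imag\int\bar u\nabla u\cdot\nabla\phi_k$. Up to orientation conventions, this is the stated stochastic integral.
\item For the Itô correction, the second variation in the direction $iu\phi_k$ gives $\tfrac12\int|\nabla(u\phi_k)|^2$ from the gradient part and $-\tfrac{\lambda}{2}\int|u|^{p+1}\phi_k^2$ from the potential part, since $iu\phi_k$ is tangential to $|u|$.
\item The Stratonovich drift $-\tfrac12\phi_k^2u$ contributes $\tfrac12\mathrm{Re}\int\Delta\bar u\,\phi_k^2u+\tfrac{\lambda}{2}\int|u|^{p+1}\phi_k^2$. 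Integrating by parts, this equals $-\tfrac12\int|\nabla u|^2\phi_k^2-\mathrm{Re}\int\phi_k\bar u\nabla u\cdot\nabla\phi_k+\tfrac{\lambda}{2}\int|u|^{p+1}\phi_k^2$.
\item Expanding $|\nabla(u\phi_k)|^2=|\nabla u|^2\phi_k^2+2\phi_k\mathrm{Re}(\bar u\nabla u\cdot\nabla\phi_k)+|u|^2|\nabla\phi_k|^2$, everything cancels except $\tfrac12\|u\nabla\phi_k\|_{L^2_x}^2$.
\end{itemize}
Summing over $k$ yields the claimed identity.

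\emph{Rigorous justification.}
\begin{enumerate}
\item Localize. Up to the stopping times $\tau_R$ of the proof of Theorem \ref{Thm:WP_H1}, further stopped so that the $C([0,t];H^1_x)\cap L^r(0,t;W^{1,p+1}_x)$-norm stays bounded, $u$ solves the mild equation with all terms controlled by the estimates of Section \ref{sec_local}.
\item Regularize. Apply $J_\varepsilon=(1-\varepsilon\Delta)^{-1}$ to the mild formula, so that $u_\varepsilon=J_\varepsilon u$ is an $H^2_x$-valued strong Itô process. It satisfies
\[
\d u_\varepsilon = i\Delta u_\varepsilon\,\d t + J_\varepsilon(\text{nonlinear drift})\,\d t + J_\varepsilon(iu\,\d W).
\]
\item Apply the classical Itô formula to $E(u_\varepsilon)$. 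This is legitimate since $E$ is $C^2$ on $H^1_x\cap L^{p+1}_x$ after the splitting $F=F_1+F_2$; alternatively, use $\|\nabla\cdot\|^2$ together with the $L^{p+1}$ functional treated via the mild Itô formula \cite[Theorem 1]{mild}.
\item Pass to the limit $\varepsilon\to0$ termwise. The Laplacian drift terms cancel exactly at the level of $u_\varepsilon$. The remaining terms converge using the commutator bounds $\|[J_\varepsilon,\phi_k]\|$ and $\|\nabla(J_\varepsilon(u\phi_k))-J_\varepsilon\nabla(u\phi_k)\|\to0$, together with $\nabla\phi\in L^\nu(\ell^2)$ and the Hölder/Sobolev pairing from Corollary \ref{lemma:stochStrich}. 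The nonlinear terms are handled by Lemma \ref{lemma:bounds_FG} and the $L^r(W^{1,p+1})$ bounds.
\item Pass to the limit in the stochastic integrals in probability via BDG. Then let $R\to\infty$.
\end{enumerate}

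\emph{Main obstacle.} The delicate step is the $\varepsilon\to0$ limit of the Itô-correction and mixed terms for the nonlinear potential $\int|u_\varepsilon|^{p+1}$. For $p<2$ this functional is only $C^1$ with Hölder second derivative. Here I would mimic \cite{DeBouard_Debussche_H1}: compute $\d\|u_\varepsilon\|_{L^{p+1}}^{p+1}$ pointwise in $x$ using the tangential structure of the noise, which makes the singular second-derivative term only involve $|u|^{p-1}|u\phi_k|^2$. That term is controlled in $L^1$ by $\|u\|_{L^{p+1}}^{p+1}\|\phi\|_{L^\infty(\ell^2)}^2$, and dominated convergence then applies.
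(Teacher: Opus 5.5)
Your proposal is correct and follows the same route as the paper, which gives no independent argument but defers to the regularization procedure of \cite[Proposition 4.5]{DeBouard_Debussche_H1}; your formal It\^o computation checks out, including the exact form of the martingale term and the cancellation that leaves only $\frac12\sum_k\|u\nabla\phi_k\|_{L^2_x}^2$. As minor remarks, $u\mapsto\int_{\R^d}|u|^{p+1}\,\d x$ is genuinely $C^2$ on $L^{p+1}_x$ for every $p>1$ (its second derivative is merely H\"older for $p<2$), and $J_\varepsilon$ commutes with $\nabla$, so your \emph{main obstacle} and your second commutator estimate are less delicate than you suggest.
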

In the situation  \ref{item_A} in which additionally $\lambda \le 0$, due to the regularity of $(\phi_k)_{k\in \N}$ imposed in \eqref{noise_reg_intro}--\eqref{noise_reg_intro_extra} and the already established Proposition \ref{prop:mass_cons}, this allows to obtain the following moment bound on the energy from \cite[Theorem 4.6]{DeBouard_Debussche_H1}  by an application of the Burkholder--Davis--Gundy inequality.
\begin{proposition}[Moment bound on energy]\label{prop:energy_bound}
    Suppose that the assumptions of Theorem \ref{thm_intro} hold, that $(\phi_k)_{k\in \N}$  are real-valued and that we are in the situation \ref{item_A}. Then, for all $K,T \in (0,\infty)$, it holds
  \begin{align}\label{eqn_estimate}
      \mathbb E \Bigl[\sup_{ 0 \le t < \tau \wedge T } E(u(t)) \Big| \|u_0\|_{L^2_x}^2 + E(u_0) \le K \Bigr]  \,\le\, C,
  \end{align}
  for a constant $C$ depending only on $(\phi_k)_{k\in \N} $, $K$   and $T$. 
\end{proposition}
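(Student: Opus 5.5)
Since $\lambda\le 0$, the energy dominates the Dirichlet part: $E(u)\ge \frac12\|\nabla u\|_{L^2_x}^2\ge 0$, and also $E(u)\ge \frac{|\lambda|}{p+1}\|u\|_{L^{p+1}_x}^{p+1}$. The plan is to take the supremum over $t\in[0,\tau\wedge T)$ in the energy identity of the preceding proposition (with $q=1$), and to bound both the drift integral and the stochastic integral. We use the conservation of mass from Proposition \ref{prop:mass_cons} throughout. Because $\tau$ may be finite, we first work up to the stopping times
\[
\tau_n=\inf\{t<\tau:\|u(t)\|_{H^1_x}\ge n\}\wedge T,
\]
so that every quantity is integrable. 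We then let $n\to\infty$ by monotone convergence, using $\tau_n\uparrow\tau\wedge T$. Conditioning on the $\mathcal F_0$-event $\{\|u_0\|_{L^2_x}^2+E(u_0)\le K\}$ amounts to multiplying by its indicator, which commutes with the stochastic integral. So it suffices to bound $\mathbb E[\mathbf 1_{B}\sup_{t<\tau_n}E(u(t))]$, where $B$ denotes this event.

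\emph{Drift term.} By Hölder and the Sobolev embedding $H^1_x\hookrightarrow L^\mu_x$ with $1/2=1/\mu+1/\nu$ (exactly as in the proof of Corollary \ref{lemma:stochStrich}), followed by interpolation,
\[
\sum_k\|u\nabla\phi_k\|_{L^2_x}^2\le \|\nabla\phi\|_{L^\nu(\R^d;\ell^2(\C^d))}^2\|u\|_{L^\mu_x}^2\lesssim \|u\|_{L^2_x}^2+\|\nabla u\|_{L^2_x}^2\lesssim K+E(u).
\]
Here we used mass conservation, $\|u\|_{L^2_x}^2=\|u_0\|_{L^2_x}^2\le K$ on $B$. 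Hence the drift integral is at most $C\int_0^t(K+E(u(s)))\,\d s$.

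\emph{Martingale term.} By the Burkholder--Davis--Gundy inequality,
\[
\mathbb E\Bigl[\mathbf 1_B\sup_{t<\tau_n}\Bigl|\sum_k\int_0^t\int_{\R^d}\overline u\nabla u\cdot\nabla\phi_k\,\d x\,\d\beta_k\Bigr|\Bigr]\lesssim \mathbb E\Bigl[\mathbf 1_B\Bigl(\int_0^{\tau_n}\sum_k\Bigl|\int \overline u\nabla u\cdot\nabla\phi_k\,\d x\Bigr|^2\d s\Bigr)^{1/2}\Bigr].
\]
For the integrand we use Cauchy--Schwarz in $k$ inside the integral, then Hölder in $x$ with exponents $2,\mu,\nu$:
\[
\sum_k|\cdots|^2\le\bigl(\|\nabla u\|_{L^2_x}\|u\|_{L^\mu_x}\|\nabla\phi\|_{L^\nu(\ell^2)}\bigr)^2\lesssim E(u)\,(K+E(u)).
\]
This gives the bound
\[
\mathbb E\Bigl[\mathbf 1_B\bigl(\sup_{s<\tau_n}(K+E(u(s)))\bigr)^{1/2}\Bigl(\int_0^{\tau_n}(K+E(u))\,\d s\Bigr)^{1/2}\Bigr]\le \tfrac12\mathbb E\bigl[\mathbf 1_B\sup_{s<\tau_n}E\bigr]+C\,\mathbb E\Bigl[\mathbf 1_B\int_0^{\tau_n}(K+E)\,\d s\Bigr],
\]
where the last step is Young's inequality.

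\emph{Conclusion.} Absorbing the first term on the right (finite thanks to the localization $\tau_n$), we arrive at
\[
f(t)\coloneq\mathbb E\bigl[\mathbf 1_B\sup_{s<\tau_n\wedge t}E(u(s))\bigr]\le 2K\,\P(B)+C K t+C\int_0^t f(s)\,\d s.
\]
Grönwall's lemma gives $f(T)\le C(K,T,\phi)\,\P(B)$, uniformly in $n$. Dividing by $\P(B)$ yields \eqref{eqn_estimate}.

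The main obstacle I expect is the BDG step. The martingale integrand is quadratic in $u$ and is controlled only by $E(u)^{1/2}(K+E(u))^{1/2}$, so it only closes via the Young/absorption trick. This absorption requires finiteness of $\mathbb E[\sup E]$, which is exactly why the $H^1_x$-localization by $\tau_n$ is needed. A second point to check is the case $d=2$, where $\nu>2$ is required so that $\mu<\infty$ and the embedding $H^1_x\hookrightarrow L^\mu_x$ holds.
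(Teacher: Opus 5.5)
Your proposal is correct and is essentially the argument the paper has in mind; the paper itself only refers to de Bouard--Debussche's Theorem 4.6. That argument combines the energy identity with mass conservation, the H\"older--Sobolev pairing of $\nabla\phi\in L^\nu(\R^d;\ell^2(\C^d))$ against $u\in L^\mu_x$, and the Burkholder--Davis--Gundy inequality, closed by Young's inequality, absorption under the $H^1_x$-localization, and Gr\"onwall.

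There are two cosmetic slips, neither of which affects the conclusion. The term $CKt$ in your Gr\"onwall inequality should read $CKt\,\P(B)$, since it comes from $\mathbb E[\mathbf 1_B\int_0^{\tau_n\wedge t}K\,\d s]$. Also, $\tau_n$ should be capped by $\tau$ as well, or you should invoke the blow-up alternative.
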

 Finally, since by $\lambda \le 0$, \eqref{eqn_estimate} implies a control on both terms of \eqref{eqn_energy_def}  separately, the above  allows to check the blow-up condition
\eqref{eq:blow_up}, as desired.

\vspace{.2cm}
\noindent
\textbf{Data availability.} This manuscript has no associated data.

\noindent
\textbf{Declaration – Conflict of interest.} The authors have no conflict of interest.

\noindent
\textbf{AI disclosure statement.}
The large language model GPT-5.6 by OpenAI was used for proofreading and checking the correctness of the manuscript.
The authors take full responsibility for the contents of the manuscript.

\end{document}